\theoremstyle{plain}
\newtheorem{cor}{Corollary}[section]
\newtheorem{lem}{Lemma}[section]
\newtheorem{thm}{Theorem}[section]            
\newtheorem{prop}{Proposition}[section]
\theoremstyle{definition}
\newtheorem{rmk}{Remark}[section]
\newcommand{\be}[1][*]{\begin{equation#1}}
\newcommand{\ee}[1][*]{\end{equation#1}}
\newcommand{\ba}{\be\begin{aligned}}
\newcommand{\ea}{\end{aligned}\ee}
\newcommand{\barr}[1]{\begin{array}{#1}}
\newcommand{\earr}{\end{array}}
\newcommand{\bite}{\begin{itemize}}
\newcommand{\eite}{\end{itemize}}
\newcommand{\btab}[1]{\renewcommand{\arraystretch}{1.2}\begin{center}\begin{tabular}{#1}}
\newcommand{\etab}{\end{tabular}\end{center}\renewcommand{\arraystretch}{1.0}}
\newcommand{\bnum}{\begin{enumerate}}
\newcommand{\enum}{\end{enumerate}}
\newcommand{\bcen}{\begin{center}}
\newcommand{\ecen}{\end{center}}
\newcommand{\mca}[1]{\mathcal{#1}}
\newcommand{\mrm}[1]{\mathrm{#1}}
\newcommand{\mfr}[1]{\mathfrak{#1}}
\def\side#1{\ifvmode\leavevmode\fi\vadjust{\vbox to0pt{\vss
\hbox to 0pt{\hskip\hsize\hskip1em                     
\vbox{\hsize2cm\small\raggedright\pretolerance10000       
\noindent\textcolor{red}{#1}\hfill}\hss}\vbox to8pt{\vfil}\vss}}}
\newcommand{\x}{\ensuremath{\times}}
\newcommand{\op}{\ensuremath{\oplus}}
\newcommand{\ox}{\ensuremath{\otimes}}
\newcommand{\lb}{\ensuremath{\left}}
\newcommand{\rb}{\ensuremath{\right}}
\newcommand{\lan}{\ensuremath{\left\langle}}
\newcommand{\ran}{\ensuremath{\right\rangle}}
\newcommand{\hook}{\ensuremath{\lrcorner\,}}
\newcommand{\ra}{\ensuremath{\rightarrow}}
\newcommand{\setsep}{\ensuremath{\,\big|\,}}
\newcommand{\ex}{\ensuremath{\exists\,}}
\newcommand{\del}{\ensuremath{\partial}}
\newcommand{\R}{\ensuremath{\mathbb{R}}}
\newcommand{\C}{\ensuremath{\mathbb{C}}}
\newcommand{\W}{\ensuremath{\mathcal{W}}}
\newcommand{\T}{\ensuremath{\mathcal{T}}}
\newcommand{\A}{\ensuremath{\mathcal{A}}}
\newcommand{\vphi}{\ensuremath{\varphi}}    
\newcommand{\vrho}{\ensuremath{\varrho}}
\newcommand{\diag}{\ensuremath{\mathrm{diag}}}
\newcommand{\Id}{\ensuremath{\mathrm{Id}}}
\newcommand{\pr}{\ensuremath{\mathrm{pr}}}
\newcommand{\Ric}{\ensuremath{\mathrm{Ric}}}
\newcommand{\hol}{\ensuremath{\mathfrak{hol}}}
\newcommand{\GL}{\ensuremath{\mathrm{GL}}}
\newcommand{\un}{\ensuremath{\mathfrak{u}}}
\newcommand{\Un}{\ensuremath{\mathrm{U}}}
\newcommand{\su}{\ensuremath{\mathfrak{su}}}
\newcommand{\Orth}{\ensuremath{\mathrm{O}}}
\newcommand{\so}{\ensuremath{\mathfrak{so}}}
\newcommand{\SO}{\ensuremath{\mathrm{SO}}}
\newcommand{\m}{\ensuremath{\mathfrak{m}}}
\begin{document}
\thispagestyle{empty}
\date{\today}
\title{Almost contact metric 5-manifolds and connections with torsion}
\author{Christof Puhle}
\address{
Institut f\"ur Mathematik \newline\indent
Humboldt-Universit\"at zu Berlin\newline\indent
Unter den Linden 6\newline\indent
10099 Berlin, Germany}
\email{\noindent puhle@math.hu-berlin.de}
\urladdr{www.math.hu-berlin.de/~puhle}
\subjclass[2010]{Primary 53C15; Secondary 53C25}
\keywords{Almost contact metric structures, connections with torsion}
\begin{abstract}
We study $5$-dimensional Riemannian manifolds that admit an almost contact metric structure. We classify these structures by their intrinsic torsion and review the literature in terms of this scheme. Moreover, we determine necessary and sufficient conditions for the existence of metric connections with vectorial, totally skew-symmetric or traceless cyclic torsion that are compatible with the almost contact metric structure. Finally, we examine explicit examples of almost contact metric $5$-manifolds from this perspective.
\end{abstract}
\maketitle
\setcounter{tocdepth}{1}
\bcen
\begin{minipage}{0.7\linewidth}
    \begin{small}
      \tableofcontents
    \end{small}
\end{minipage}
\ecen
\pagestyle{headings}
%
%
%
%------------------------------------------------------------------------------------------
%
\section{Introduction}\noindent
In the 1920s, Cartan classified metric connections on Riemannian manifolds $\lb(M^n,g\rb)$ by the algebraic type of the corresponding torsion tensor (see \cite{Car25}). A central result is that for $n\geq3$, the space $\T$ of possible torsion tensors splits into the direct sum of three irreducible $\Orth\lb(n\rb)$-modules,
\be
\T=\T_1\op\T_2\op\T_3.
\ee
Consequently, there are three principal types of torsion: type $\T_1$ (vectorial torsion), type $\T_2$ (totally skew-symmetric torsion) and type $\T_3$ (traceless cyclic torsion). In contrast to the first two cases, metric connections with traceless cyclic torsion remain unexplored to this day (consult \cite{Agr06} for an overview).

An almost contact metric manifold is an odd-dimensional Riemannian manifold $\lb(M^{2k+1},g\rb)$ such that there exists a reduction of the structure group of orthonormal frames of the tangent bundle to $\Un\left(k\right)$ (see \cite{Gra59}). As shown in \cites{SH61,SH62}, an almost contact metric structure on $\lb(M^{2k+1},g\rb)$ can be equivalently defined by a triple $\lb(\xi,\eta,\vphi\rb)$ of tensor fields satisfying certain conditions (see \autoref{sec:3}).

The purpose of this paper is to investigate almost contact metric $5$-manifolds with regard to the existence of metric connections $\nabla^c$ with vectorial, totally skew-symmetric or traceless cyclic torsion that are compatible with the underlying almost contact metric structure, i.e.\
\ba
\nabla^c\xi&=0, &
\nabla^c\eta&=0, &
\nabla^c\vphi&=0.
\ea
We proceed as follows:

Firstly, we study the algebra related to the action of the group $\Un(2)$ (see \autoref{sec:2}). Amongst other things, we show that for an almost contact metric $5$-manifold the space $\T$ of possible torsion tensors splits into $15$ irreducible $\Un(2)$-modules (see corollary \ref{cor:1}),
\ba
\T_1&=\T_{1,1}\op\T_{1,2}, &
\T_2&=\T_{2,1}\op\ldots\op\T_{2,4}, &
\T_3&=\T_{3,1}\op\ldots\op\T_{3,9}.
\ea

Secondly, we follow the method of \cite{Fri03} and classify almost contact metric $5$-mani\-folds with respect to the algebraic type of the corresponding intrinsic torsion tensor $\Gamma$ (see \autoref{sec:3}). There are $10$ irreducible $\Un(2)$-modules $\W_1,\ldots,\W_{10}$ in the decomposition of the space of possible intrinsic torsion tensors (see proposition \ref{prop:1}):
\be
\Gamma\in\W_1\op\ldots\op\W_{10}.
\ee
Therefore, there exist $2^{10}=1024$ classes according to this approach. Obviously, most of them have never been studied. We introduce those carrying names and review them, in the light of our classification scheme, in section \ref{sec:3.2}. To mention just two examples, quasi-Sasaki manifolds (see \cite{Bla67}) correspond to the case $\Gamma\in\W_3\op\W_5$, and trans-Sasaki manifolds (see \cite{Oub85}) correspond to the class $\W_1\op\W_3$ (see theorems \ref{thm:2} and \ref{thm:3}). In section \ref{sec:3.1}, we relate our classification scheme to the work of Chinea-Gonzalez \cite{CG90} and Chinea-Marrero \cite{CM92}. For example, any almost contact metric $5$-manifold of Chinea-Marrero class $\mca{N}_2$ is of class $\W_1\op\W_3\op\W_5\op\W_6\op\W_8\op\W_9$ and vice versa (see theorem \ref{thm:1}).

Thirdly, we determine necessary and sufficient conditions for the existence of a compatible connection $\nabla^c$ with vectorial, totally skew-symmetric or traceless cyclic torsion (see \autoref{sec:4}). If the torsion tensor of $\nabla^c$ is traceless cyclic, then (see proposition \ref{prop:3}) the almost contact metric $5$-manifold is of class
\be
\W_2\op\W_3\op\W_5\op\W_6\op\W_7\op\W_8\op\W_9\op\W_{10}.
\ee
Conversely, any almost contact metric $5$-manifold of this class admits a unique metric connection $\nabla^c$ with traceless cyclic torsion that is compatible with the underlying structure (see\ theorem \ref{thm:5}). Theorems \ref{thm:6} and \ref{thm:4} contain similar results for the cases of vectorial and totally skew-symmetric torsion, the respective types of the intrinsic torsion tensor are $\W_1\op\W_2$ and $\W_3\op\W_4\op\W_5\op\W_6$.

Finally, we present explicit examples of almost contact metric $5$-manifolds (see \autoref{sec:5}). The corresponding intrinsic torsion tensors are of type
\be
\W_1,\W_2,\W_3,\W_5,\W_6,\W_8,\W_9,\W_{10}.
\ee
Using the results of \autoref{sec:4}, we identify compatible connections $\nabla^c$ for each example. The torsion tensors of these connections realize the following types:
\ba
&\T_{1,1},\T_{1,2}, &
&\T_{2,1},\T_{2,3},\T_{2,4}, &
&\T_{3,1},\T_{3,3},\ldots,\T_{3,8}.
\ea

Almost contact metric $5$-manifolds of class $\W_4\op\W_7$ exist, too. Indeed, Blair constructed an almost contact metric structure on $\mrm{S}^5$ which is nearly cosymplectic (see \cite{Bla71}). A glance at theorem \ref{thm:2} allows to deduce that this almost contact metric $5$-manifold is of class $\W_4\op\W_7$.
%
%
%
%------------------------------------------------------------------------------------------
%
\section{The local model}\label{sec:2}\noindent
We first introduce some notation. $\R^5$ denotes the $5$-dimensional Euclidean space. We fix an orientation in $\R^5$ and use its scalar product $\lan \cdot,\cdot\ran$ to identify $\R^5$ with its dual space ${\R^5}^{\ast}$. Let $\lb(e_1,\ldots,e_5\rb)$ denote an oriented orthonormal basis and $\Lambda^k$ the space of $k$-forms of $\R^5$. The family of operators
\be
\sigma_j:\Lambda^k\x\Lambda^l\ra\Lambda^{k+l-2j},
\ee
\ba
\sigma_{j}\lb(\alpha,\beta\rb)&:=\sum_{i_1<\ldots<i_j}\lb(e_{i_1}\hook\ldots\hook e_{i_j}\hook\alpha\rb)\wedge\lb(e_{i_1}\hook\ldots\hook e_{i_j}\hook\beta\rb), &
\sigma_0\lb(\alpha,\beta\rb)&:=\alpha\wedge\beta
\ea
allows us to define an inner product and a norm on $\Lambda^k$ as
\ba
\lan\alpha,\beta\ran&:=\sigma_k\lb(\alpha,\beta\rb), &
\|\alpha\|&:=\sqrt{\sigma_k\lb(\alpha,\alpha\rb)}.
\ea
The special orthogonal group $\SO(5)$ acts on $\Lambda^k$ via the adjoint representation $\vrho$. The differential
\be
\vrho_* : \mfr{so}(5) \ra \mfr{so}\lb(\Lambda^k\rb)
\ee
of this faithful representation can be expressed as
\be
\vrho_\ast\lb(\omega\rb)\lb(\alpha\rb) = \sigma_1\lb(\omega,\alpha\rb)
\ee
by identifying the Lie algebra $\so(5)$ with the space of $2$-forms $\Lambda^2$.

We now define the vector $\xi:=e_5$, denote its dual $1$-form by $\eta$ and consider the endomorphism $\vphi:\R^5\ra\R^5$ given by the matrix
\be
\begin{pmatrix}
0 & 1 & 0 & 0 & 0\\
-1 & 0 & 0 & 0 & 0\\
0 & 0 & 0 & 1 & 0\\
0 & 0 & -1 & 0 & 0\\
0 & 0 & 0 & 0 & 0
\end{pmatrix}.
\ee
The latter satisfies
\ba
\vphi\lb(\xi\rb)&=0, & \vphi^2&=-\Id+\eta\ox\xi, & \lan\vphi\lb(X\rb),\vphi\lb(Y\rb)\ran=\lan X,Y\ran-\eta\lb(X\rb)\eta\lb(Y\rb).
\ea
The $4$-dimensional, compact, connected Lie group $\Un(2)\subset\SO(5)$ can be described as the isotropy group of the $2$-form $\Phi$ defined via
\be
\Phi\lb(X,Y\rb):=\lan X,\vphi\lb(Y\rb)\ran.
\ee
Alternatively, we have
\be
\Un(2)=\lb\{A\in\Orth(5)\setsep A\lb(\xi\rb)=\xi,\ A\circ\vphi=\vphi\circ A\rb\}.
\ee
%
%--------------------------------------------------------------------------------------------
%
\subsection{The decomposition of \texorpdfstring{$\Lambda^k$}{Lambda^k}}
The Hodge operator $\ast:\Lambda^k\ra\Lambda^{5-k}$ is $\Un(2)$-equi\-var\-i\-ant. Using this operator, we decompose $\Lambda^k$ into irreducible $\Un(2)$-modules of real type. The space
\be
\Lambda^1=\Lambda^1_1\op\Lambda^1_2
\ee
splits into the two irreducible $\Un(2)$-modules
\ba
\Lambda^1_1 &:= \lb\{t\cdot\eta\setsep t\in\R\rb\}, &
\Lambda^1_2 &:= \lb\{\alpha\in\Lambda^1\setsep\xi\hook\alpha=0\rb\}.
\ea
The space of $2$-forms
\be
\Lambda^2=\Lambda^2_1\op\Lambda^2_2\op\Lambda^2_3\op\Lambda^2_4
\ee
decomposes into four irreducible $\Un(2)$-modules:
\ba
\Lambda^2_1 &:= \lb\{t\cdot\Phi\setsep t\in\R\rb\}, \\
\Lambda^2_2 &:= \lb\{\alpha\in\Lambda^2\setsep \Phi\wedge\alpha=0,\  \ast\alpha=\eta\wedge\alpha\rb\},\\
\Lambda^2_3 &:= \lb\{\alpha\in\Lambda^2\setsep \ast\alpha=-\eta\wedge\alpha\rb\},\\
\Lambda^2_4 &:= \lb\{\alpha\in\Lambda^2\setsep \eta\wedge\alpha=0\rb\}.
\ea
The dimensions of these modules are
\ba
\dim\lb(\Lambda^2_i\rb)&=i.
\ea
Moreover, we have
\ba
\Lambda^2_1\op\Lambda^2_2 & =
\lb\{\alpha\in\Lambda^2\setsep\ast\alpha=\eta\wedge\alpha\rb\}, &
\Lambda^2_2\op\Lambda^2_3 & =\lb\{\alpha\in\Lambda^2\setsep\Phi\wedge\alpha=0\rb\}
\ea
and
\be
\alpha\lb(\vphi\lb(X\rb),\vphi\lb(Y\rb)\rb)=\begin{cases}
  \alpha\lb(X,Y\rb) & \text{iff $\alpha\in\Lambda^2_1\op\Lambda^2_3$}\\
  -\alpha\lb(X,Y\rb) & \text{iff $\alpha\in\Lambda^2_2$}\\
  0 & \text{iff $\alpha\in\Lambda^2_4$}\ .
\end{cases}
\ee
We then define
\ba
\Lambda^3_i &:= \ast\Lambda^2_i, & \Lambda^4_i &:= \ast\Lambda^1_i.
\ea
Consequently, the decompositions
\ba
\Lambda^3&=\Lambda^3_1\op\Lambda^3_2\op\Lambda^3_3\op\Lambda^3_4, &
\Lambda^4=\Lambda^4_1\op\Lambda^4_2
\ea
split the spaces of $3$-forms and $4$-forms of $\R^5$ into irreducible $\Un(2)$-modules.

The Lie algebra $\so\lb(5\rb)$ splits into $\un\lb(2\rb)=\Lambda^2_1\op\Lambda^2_3$, spanned by
\ba
\Phi&=e_1\wedge e_2+e_3\wedge e_4,&
\omega_1&:=e_1\wedge e_2-e_3\wedge e_4,\\
\omega_2&:=e_1\wedge e_3+e_2\wedge e_4,&
\omega_3&:=e_1\wedge e_4-e_2\wedge e_3,
\ea
and its orthogonal complement $\m:=\Lambda^2_2\op\Lambda^2_4$.
%
%--------------------------------------------------------------------------------------------
%
\subsection{The decomposition of \texorpdfstring{$\Lambda^1\ox\Lambda^2$}{Lambda^1xLambda^2}, \texorpdfstring{$\Lambda^2\ox\Lambda^1$}{Lambda^2xLambda^1} and \texorpdfstring{$\Lambda^1\ox\m$}{Lambda^1xm}}\label{sec:2.2}
The space
\be
\T:=\Lambda^2\ox\Lambda^1=\lb\{T\in\ox^3\Lambda^1\setsep T\lb(X,Y,Z\rb)+ T\lb(Y,X,Z\rb)=0\rb\}
\ee
splits into three irreducible $\Orth(5)$-modules (see \cite{Car25}),
\be
\T=\T_1\op\T_2\op\T_3,
\ee
where
\ba
\T_1&:=\lb\{ T\in\T\setsep\ex\alpha\in\Lambda^1: T\lb(X,Y,Z\rb)
=\alpha\lb(X\rb)\lan Y,Z\ran-\alpha\lb(Y\rb)\lan X,Z\ran\rb\},\\
\T_2&:=\lb\{ T\in\T\setsep T\lb(X,Y,Z\rb)+ T\lb(X,Z,Y\rb)=0\rb\},\\
\T_3&:=\lb\{ T\in\T\setsep\mfr{S}_{X,Y,Z} T\lb(X,Y,Z\rb)=0,\  \textstyle{\sum_i} T\lb(X,e_i,e_i\rb)=0\rb\}.
\ea
Here $\mfr{S}_{X,Y,Z}$ denotes the cyclic sum over $X,Y,Z$. There exists an $\Orth(5)$-equivariant bijection $\tau$ between
\be
\A:=\Lambda^1\ox\Lambda^2=\lb\{ A\in\ox^3\Lambda^1\setsep A\lb(X,Y,Z\rb)+ A\lb(X,Z,Y\rb)=0\rb\}
\ee
and $\T$ (see \cite{Car25}) explicitly given by
\ba
\tau\lb( A\rb)\left(X,Y,Z\right)&= A\left(X,Y,Z\right)- A\left(Y,X,Z\right),\\
2\,\tau^{-1}\lb( T\rb)\left(X,Y,Z\right)&= T\left(X,Y,Z\right)- T\left(Y,Z,X\right)+ T\left(Z,X,Y\right).
\ea
Using $\tau$, we obtain that
\be
\A=\A_1\op\A_2\op\A_3
\ee
splits into the three irreducible $\Orth(5)$-modules
\ba
\A_1&:=\lb\{ A\in\A\setsep\ex\alpha\in\Lambda^1: A\lb(X,Y,Z\rb)
=\alpha\lb(Z\rb)\lan X,Y\ran-\alpha\lb(Y\rb)\lan X,Z\ran\rb\},\\
\A_2&:=\lb\{ A\in\A\setsep A\lb(X,Y,Z\rb)+ A\lb(Y,X,Z\rb)=0\rb\},\\
\A_3&:=\lb\{ A\in\A\setsep\mfr{S}_{X,Y,Z} A\lb(X,Y,Z\rb)=0,\  \textstyle{\sum_i} A\lb(e_i,e_i,X\rb)=0\rb\}.
\ea

We now decompose these three spaces under the action of $\Un(2)$. For this purpose, we define the injective $\Un(2)$-equivariant maps
\ba
\theta_1&:\Lambda^1\ra\A_1, &
\theta_1&\lb(\alpha\rb)\lb(X,Y,Z\rb):=\alpha\lb(Z\rb)\lan X,Y\ran-\alpha\lb(Y\rb)\lan X,Z\ran,\\
\theta_2&:\Lambda^3\ra\A_2, &
\theta_2&\lb(\alpha\rb):=\sum_ie_i\ox\lb(e_i\hook\alpha\rb),\\
\theta_3&:\Lambda^2_1\op\Lambda^2_2\op\Lambda^2_3\ra\A_3, &
\theta_3&\lb(\alpha\rb):=3\,\eta\ox\alpha-\theta_2(\eta\wedge\alpha)
\ea
and
\ba
\theta_4&:\Lambda^1_2\ra\A_3, &
\theta_4&\lb(\alpha\rb):=\sum_ie_i\ox\lb(\alpha\wedge e_i\rb)+\frac{1}{2}\theta_2(\alpha\hook\lb(\Phi\wedge\Phi\rb))-3\lb(\alpha\hook\Phi\rb)\ox\Phi-\eta\ox\lb(\alpha\wedge\eta\rb),\\
\theta_5&:\Lambda^1_2\ra\A_3, &
\theta_5&\lb(\alpha\rb):=\sum_ie_i\ox\lb(\alpha\wedge e_i\rb)+\theta_2(\alpha\hook\lb(\Phi\wedge\Phi\rb))-6\lb(\alpha\hook\Phi\rb)\ox\Phi+2\,\eta\ox\lb(\alpha\wedge\eta\rb),
\ea
and introduce the following subspaces of $\A$:
\ba
\A_{1,i}&:=\theta_1\lb(\Lambda^1_i\rb),\quad i=1,2, \\
\A_{2,i}&:=\theta_2\lb(\Lambda^3_i\rb),\quad i=1,2,3,4, \\
\A_{3,i}&:=\theta_3\lb(\Lambda^2_i\rb),\quad i=1,2,3, \\
\A_{3,i}&:=\theta_i\lb(\Lambda^1_2\rb),\quad i=4,5, \\
\A_{3,6}&:=\lb\{A\in\A_3\setsep A\lb(X,Y,Z\rb)=
A\lb(\vphi\lb(X\rb),Y,\vphi\lb(Z\rb)\rb)+A\lb(\vphi\lb(X\rb),\vphi\lb(Y\rb),Z\rb)\rb\}, \\
\A_{3,7}&:=\lb\{A\in\A_3\setsep A\lb(X,Y,Z\rb)=-
A\lb(X,\vphi\lb(Y\rb),\vphi\lb(Z\rb)\rb)\rb\}, \\
\A_{3,8}&:=\lb\{A\in\A_3\setsep A\lb(X,Y,Z\rb)=-
A\lb(\vphi\lb(X\rb),Y,\vphi\lb(Z\rb)\rb)-A\lb(\vphi\lb(X\rb),\vphi\lb(Y\rb),Z\rb)\rb\}, \\
\A_{3,9}&:=\lb\{A\in\A_3\setsep A\lb(X,Y,Z\rb)=
A\lb(X,\vphi\lb(Y\rb),\vphi\lb(Z\rb)\rb)\rb\}.
\ea
Inspecting the latter, we deduce
\begin{prop}
The space $\A=\Lambda^1\ox\Lambda^2$ splits into $15$ irreducible $\Un(2)$-modules,
\be
\A=\A_{1,1}\op\A_{1,2}\op\A_{2,1}\op\ldots\op\A_{2,4}\op\A_{3,1}\op\ldots\op\A_{3,9},
\ee
\ba
\A_1&=\A_{1,1}\op\A_{1,2}, &
\A_2&=\A_{2,1}\op\ldots\op\A_{2,4}, &
\A_3&=\A_{3,1}\op\ldots\op\A_{3,9}.
\ea
Moreover,
\bite
\item[a)] The dimensions of the $\Un(2)$-modules are
\ba
\dim\lb(\A_{1,1}\rb)&=\dim\lb(\A_{2,1}\rb)=\dim\lb(\A_{3,1}\rb)=1,\\
\dim\lb(\A_{2,2}\rb)&=\dim\lb(\A_{3,2}\rb)=2,\\
\dim\lb(\A_{2,3}\rb)&=\dim\lb(\A_{3,3}\rb)=\dim\lb(\A_{3,6}\rb)=3,\\
\dim\lb(\A_{1,2}\rb)&=\dim\lb(\A_{2,4}\rb)=\dim\lb(\A_{3,4}\rb)=\dim\lb(\A_{3,5}\rb)=\dim\lb(\A_{3,7}\rb)=4,\\
\dim\lb(\A_{3,8}\rb)&=6,\\
\dim\lb(\A_{3,9}\rb)&=8.\\
\ea
\item[b)] The following $\Un(2)$-modules are isomorphic:
\ba
\A_{1,1}&\cong\A_{2,1}\cong\A_{3,1}, & \A_{2,2}&\cong\A_{3,2}, \\
\A_{2,3}&\cong\A_{3,3}\cong\A_{3,6}, & \A_{1,2}&\cong\A_{2,4}\cong\A_{3,4}\cong\A_{3,5}.
\ea
\item[c)] The $\Un(2)$-modules $\A_{3,7}$ and $\A_{1,2}\cong\A_{2,4}\cong\A_{3,4}\cong\A_{3,5}$ are not isomorphic.
\eite
\end{prop}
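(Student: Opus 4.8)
The statement asserts that $\A=\Lambda^1\ox\Lambda^2$ decomposes into exactly the $15$ listed irreducible $\Un(2)$-modules, with the stated dimensions, isomorphism types, and the non-isomorphism in part (c). My plan is to work separately on the three $\Orth(5)$-summands $\A_1$, $\A_2$, $\A_3$, since each is already irreducible over $\Orth(5)$ and we only need to break them up under $\Un(2)$.

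For $\A_1$ this is immediate: $\theta_1$ is an $\Un(2)$-equivariant isomorphism $\Lambda^1\to\A_1$, so $\A_1=\A_{1,1}\op\A_{1,2}$ mirrors the known splitting $\Lambda^1=\Lambda^1_1\op\Lambda^1_2$ into irreducibles of dimensions $1$ and $4$. For $\A_2$ I would use $\theta_2:\Lambda^3\to\A_2$, again $\Un(2)$-equivariant and injective, so $\A_2\cong\Lambda^3$ as $\Un(2)$-modules; combined with $\Lambda^3_i:=\ast\Lambda^2_i$ and $\Orth(5)$-/$\Un(2)$-equivariance of the Hodge star, this gives $\A_2=\A_{2,1}\op\dots\op\A_{2,4}$ with dimensions $1,2,3,4$ inherited from $\dim(\Lambda^2_i)=i$. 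The isomorphisms $\A_{1,1}\cong\A_{2,1}$, $\A_{1,2}\cong\A_{2,4}$ in part (b) then follow by matching trivial modules ($\Lambda^1_1\cong\Lambda^2_1$, both trivial) and by identifying $\Lambda^1_2\cong\Lambda^2_4$ (the standard $\Un(2)$-representation on $\R^4\cong\C^2$, respectively its conjugate, seen inside $\Lambda^2$).

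The substantive part is $\A_3$, which is $30$-dimensional and must split into nine pieces of dimensions $1,2,3,3,4,4,4,6,8$ (summing to $35$—so I should double-check: actually $\dim\A_3 = \dim\A - \dim\A_1 - \dim\A_2 = 50 - 5 - 10 = 35$, and $1+2+3+3+4+4+4+6+8=35$, consistent). I would proceed as follows. First, the maps $\theta_3,\theta_4,\theta_5$ are $\Un(2)$-equivariant (by construction from $\eta$, $\Phi$, $\theta_2$, all $\Un(2)$-invariant data) and land in $\A_3$; I would verify injectivity of each and that their images are mutually non-overlapping, giving $\A_{3,1},\dots,\A_{3,5}$ of dimensions $1,2,3,4,4$. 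Then $\A_{3,6},\dots,\A_{3,9}$ are defined by $\vphi$-linearity conditions: I would check these are $\Un(2)$-submodules (since $\vphi$ commutes with $\Un(2)$), compute their dimensions (most cleanly by diagonalizing the relevant commuting operators $A\mapsto A(\vphi\,\cdot\,,\cdot,\vphi\,\cdot\,)$ etc. and reading off eigenspaces), and confirm that together with $\A_{3,1},\dots,\A_{3,5}$ they span $\A_3$ and are pairwise orthogonal, hence give a direct sum decomposition into $35 = 1+2+3+4+4 + 3+4+6+8$. Irreducibility of each piece I would get by comparing dimensions against the irreducible $\Un(2)$-representations and their known character/dimension list (real-type irreducibles of $\Un(2)$ have real dimensions $1,2,3,4,6,8,\dots$), or by an explicit Schur-type argument; the isomorphisms $\A_{2,3}\cong\A_{3,3}\cong\A_{3,6}$ and $\A_{1,2}\cong\A_{3,4}\cong\A_{3,5}$ follow from tracking which $\Lambda^k_i$ each is built from via $\theta_3,\theta_4,\theta_5$.

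The main obstacle is part (c): $\A_{3,7}$ and the common type $\A_{1,2}\cong\A_{2,4}\cong\A_{3,4}\cong\A_{3,5}$ both have real dimension $4$, so a dimension count cannot distinguish them. To separate them I would compute the action of $\vphi$ (equivalently, of the central circle in $\Un(2)$ generated by $\Phi$) on each: on $\A_{1,2}=\theta_1(\Lambda^1_2)$ the relevant operator has $\vphi$ acting as in $\Lambda^1_2$, i.e.\ with eigenvalues $\pm i$ each of multiplicity $2$ on the complexification, whereas on $\A_{3,7}$, defined by $A(X,Y,Z)=-A(X,\vphi(Y),\vphi(Z))$, the structure of the $\Phi$-action is different—I would identify $\A_{3,7}$ as carrying the representation on $\Lambda^2_2$-twisted data (roughly $\Lambda^1_2$ tensored with the $\vphi$-anti-invariant part), which has distinct $\Phi$-weights (the central circle acts with weight $\pm 2$ or trivially rather than weight $\pm 1$). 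Concretely, I would pick an explicit basis of each $4$-dimensional module, apply $\vphi$ in the appropriate slots, and exhibit that the eigenvalues of the central element differ, so no $\Un(2)$-equivariant isomorphism can exist. This weight computation is the one genuinely delicate bookkeeping step; everything else is equivariance plus dimension counting.
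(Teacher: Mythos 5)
Your plan is correct and is essentially the verification the paper leaves implicit behind ``inspecting the latter, we deduce'': check equivariance and injectivity of $\theta_1,\ldots,\theta_5$, transport the known splittings of $\Lambda^1$, $\Lambda^2$, $\Lambda^3$ into $\A_1$, $\A_2$, $\A_3$, count dimensions ($\dim\A_3=35=1+2+3+4+4+3+4+6+8$, as you correct yourself), and distinguish the two $4$-dimensional types in part c) by the central character. The only caveat is that your guessed weights for $\A_{3,7}$ ($\pm2$ or $0$) are not what the computation gives — $\vrho_\ast(\Phi)$ acts with eigenvalues $\pm2i$ on $\Lambda^2_2$ and $\pm i$ on $\Lambda^1_2$, so on $\A_{3,7}\subset\Lambda^1_2\ox\Lambda^2_2$ the relevant eigenvalues are $\pm3i$ versus $\pm i$ on $\A_{1,2}$ — but since you commit to carrying out that eigenvalue computation explicitly, the method still settles the non-isomorphism.
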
\noindent
We recommend the article \cite{Fin95} for a qualitative decomposition of $\A$ in the style of the book \cite{Sal89}. Comparing dimensions and multiplicities, we have the following isomorphisms between the spaces of \cite{Fin95} and the modules $\A_{i,j}$ above:
\ba
\A_{1,1}&\cong\A_{2,1}\cong\A_{3,1}\cong\R, &
\A_{1,2}&\cong\A_{2,4}\cong\A_{3,4}\cong\A_{3,5}\cong
\lb\llbracket\lambda^{1,0}\rb\rrbracket, \\
\A_{2,2}&\cong\A_{3,2}\cong\lb\llbracket\lambda^{2,0}\rb\rrbracket, & 
\A_{2,3}&\cong\A_{3,3}\cong\A_{3,6}\cong\lb[\lambda^{1,1}_0\rb], \\
\A_{3,7}&\cong\lb\llbracket A\rb\rrbracket, &
\A_{3,8}&\cong\lb\llbracket\sigma^{2,0}\rb\rrbracket, \\
\A_{3,9}&\cong\lb\llbracket B\rb\rrbracket.
\ea

The bijection $\tau:\A\ra\T$ is $\Un(2)$-equivariant. Defining the subspaces
\ba
\T_{1,i}&:=\tau\lb(\A_{1,i}\rb), &
\T_{2,i}&:=\tau\lb(\A_{2,i}\rb), &
\T_{3,i}&:=\tau\lb(\A_{3,i}\rb)
\ea
of $\T$, we consequently have
\begin{cor}\label{cor:1}
The space $\T=\Lambda^2\ox\Lambda^1$ splits into $15$ irreducible $\Un(2)$-modules,
\be
\T=\T_{1,1}\op\T_{1,2}\op\T_{2,1}\op\ldots\op\T_{2,4}\op\T_{3,1}\op\ldots\op\T_{3,9},
\ee
\ba
\T_1&=\T_{1,1}\op\T_{1,2}, &
\T_2&=\T_{2,1}\op\ldots\op\T_{2,4}, &
\T_3&=\T_{3,1}\op\ldots\op\T_{3,9}.
\ea
\end{cor}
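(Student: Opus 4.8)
The plan is to obtain this corollary purely formally from the preceding proposition by transporting the decomposition of $\A$ along the bijection $\tau:\A\ra\T$. First I would recall that $\tau$ is a linear isomorphism which, being $\Orth(5)$-equivariant, is a fortiori $\Un(2)$-equivariant, because $\Un(2)\subset\Orth(5)$. A $\Un(2)$-equivariant linear isomorphism carries every $\Un(2)$-submodule onto a $\Un(2)$-submodule of the same isomorphism type; in particular it sends irreducible submodules to irreducible submodules, and it turns a direct-sum decomposition into irreducibles into another such decomposition.

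Applying this observation to the $15$-fold decomposition $\A=\A_{1,1}\op\A_{1,2}\op\A_{2,1}\op\ldots\op\A_{2,4}\op\A_{3,1}\op\ldots\op\A_{3,9}$ furnished by the proposition, and using the definitions $\T_{1,i}:=\tau(\A_{1,i})$, $\T_{2,i}:=\tau(\A_{2,i})$, $\T_{3,i}:=\tau(\A_{3,i})$, I immediately get that $\T=\T_{1,1}\op\T_{1,2}\op\T_{2,1}\op\ldots\op\T_{2,4}\op\T_{3,1}\op\ldots\op\T_{3,9}$ is a decomposition of $\T$ into $15$ irreducible $\Un(2)$-modules.

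The only point that requires a separate argument is the coarser grouping $\T_1=\T_{1,1}\op\T_{1,2}$, $\T_2=\T_{2,1}\op\ldots\op\T_{2,4}$, $\T_3=\T_{3,1}\op\ldots\op\T_{3,9}$, i.e.\ the compatibility $\tau(\A_i)=\T_i$ for $i=1,2,3$. This can be checked directly from the explicit formula for $\tau$ together with the defining conditions of $\A_i$ and $\T_i$: for $A\in\A_1$ with associated $1$-form $\alpha$ one computes $\tau(A)(X,Y,Z)=\alpha(X)\langle Y,Z\rangle-\alpha(Y)\langle X,Z\rangle\in\T_1$, while $A\in\A_2$ (symmetric in the first two slots) gives $\tau(A)$ totally skew, hence in $\T_2$, and $A\in\A_3$ gives $\tau(A)\in\T_3$ since $\tau$ intertwines the two realizations of the Cartan decomposition. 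Alternatively, since the $\Orth(5)$-modules $\T_1,\T_2,\T_3$ are pairwise non-isomorphic (their dimensions $5,10,35$ are distinct), Schur's lemma forces the $\Orth(5)$-equivariant isomorphism $\tau$ to respect the grading, which again yields $\tau(\A_i)=\T_i$.

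There is no substantial obstacle here; the statement is genuinely a corollary. The only thing to watch is bookkeeping, namely ensuring that the labels attached to the $\T_{i,j}$ match those of the $\A_{i,j}$, which is exactly what the identity $\tau(\A_i)=\T_i$ guarantees.
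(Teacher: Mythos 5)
Your argument is correct and is essentially the paper's own: the corollary is obtained there, too, by transporting the fifteen-fold decomposition of $\A$ through the $\Un(2)$-equivariant bijection $\tau$, with the compatibility $\tau\lb(\A_i\rb)=\T_i$ already built into how the $\A_i$ were defined from the $\T_i$ earlier in the section. (One small slip in your parenthetical: elements of $\A_2$ are \emph{antisymmetric}, not symmetric, in their first two arguments — which is precisely why $\tau\lb(A\rb)$ is totally skew.)
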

Finally, we consider the space $\W:=\Lambda^1\ox\m$. Using the projection $\pr_{\m}$ onto $\m$, we define the map $\pr_{\W}:\A\ra\W$ via
\be
\pr_{\W}\lb(\alpha\ox\beta\rb):=\alpha\ox\pr_{\m}\lb(\beta\rb).
\ee
\begin{lem}\label{lem:1}
The map $\pr_{\W}$ is $\Un(2)$-equivariant and satisfies
\ba
\pr_{\W}\lb(\A_{1,1}\rb)&=\A_{1,1}, & 
\pr_{\W}\lb(\A_{1,2}\rb)&=\pr_{\W}\lb(\A_{3,5}\rb), &
\pr_{\W}\lb(\A_{2,1}\rb)&=\pr_{\W}\lb(\A_{3,1}\rb), \\
\pr_{\W}\lb(\A_{2,2}\rb)&=\A_{2,2}, &
\pr_{\W}\lb(\A_{2,3}\rb)&=\pr_{\W}\lb(\A_{3,3}\rb), &
\pr_{\W}\lb(\A_{2,4}\rb)&=\pr_{\W}\lb(\A_{3,4}\rb), \\
\pr_{\W}\lb(\A_{3,2}\rb)&=\A_{3,2}, &
\pr_{\W}\lb(\A_{3,6}\rb)&=\A_{3,6}, &
\pr_{\W}\lb(\A_{3,7}\rb)&=\A_{3,7}, \\
\pr_{\W}\lb(\A_{3,8}\rb)&=\A_{3,8}, &
\pr_{\W}\lb(\A_{3,9}\rb)&=\lb\{0\rb\}. &&
\ea
\end{lem}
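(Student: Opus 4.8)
The plan is to prove the lemma by a direct but structured case analysis over the $15$ summands $\A_{i,j}$, exploiting the fact that $\pr_\W$ is manifestly $\Un(2)$-equivariant (it is built from the $\Un(2)$-equivariant projection $\pr_\m:\Lambda^2\to\m=\Lambda^2_2\op\Lambda^2_4$ applied in the second tensor slot) and Schur's lemma. Since each $\A_{i,j}$ is an irreducible $\Un(2)$-module, its image $\pr_\W(\A_{i,j})\subset\W$ is again irreducible (or zero), and to identify it one only needs to (a) decide whether $\pr_\W$ kills $\A_{i,j}$ and, if not, (b) match the resulting irreducible submodule of $\W$ against a known one using the dimension/multiplicity data from Proposition (the one just before Corollary \ref{cor:1}) together with parts b) and c) of that proposition.

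First I would compute the decomposition of $\W=\Lambda^1\ox\m=(\Lambda^1_1\op\Lambda^1_2)\ox(\Lambda^2_2\op\Lambda^2_4)$ into irreducible $\Un(2)$-modules, recording dimensions and isomorphism types; this is the necessary ``target list'' against which images are matched, and it follows from the same representation-theoretic bookkeeping already used to prove the preceding proposition. Next I would go summand by summand. The modules $\A_{1,1}$, $\A_{2,2}$, $\A_{3,2}$, $\A_{3,6}$, $\A_{3,7}$, $\A_{3,8}$ are claimed to be fixed by $\pr_\W$; for each of these it suffices to check that a generic (hence, by irreducibility, every) element already lies in $\Lambda^1\ox\m$, i.e.\ that its $\Lambda^2_1\op\Lambda^2_3$-component in the form-slot vanishes — for $\A_{1,1}=\theta_1(\Lambda^1_1)$ this is immediate from the explicit formula for $\theta_1$, and for $\theta_2(\Lambda^3_i)$-type and $\vphi$-eigenspace-type modules it is a short computation using the $\vphi$-behaviour of forms in $\Lambda^2_1\op\Lambda^2_3$ versus $\Lambda^2_2\op\Lambda^2_4$ recorded in Section \ref{sec:2}. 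The module $\A_{3,9}$ is claimed to be annihilated: here I would show that every element of $\A_{3,9}$ actually lies in $\Lambda^1\ox(\Lambda^2_1\op\Lambda^2_3)$, for instance by using its defining property $A(X,Y,Z)=A(X,\vphi Y,\vphi Z)$ to force the form $\beta$ in each slot to satisfy $\beta(\vphi X,\vphi Y)=\beta(X,Y)$, which by the case distinction in Section \ref{sec:2} means $\beta\in\Lambda^2_1\op\Lambda^2_3$, hence $\pr_\m\beta=0$; a dimension check ($\dim\A_{3,9}=8$ matches a corresponding piece of $\un(2)^\perp$-free part) confirms nothing is lost. For the remaining five identities $\pr_\W(\A_{1,2})=\pr_\W(\A_{3,5})$, $\pr_\W(\A_{2,1})=\pr_\W(\A_{3,1})$, $\pr_\W(\A_{2,3})=\pr_\W(\A_{3,3})$, $\pr_\W(\A_{2,4})=\pr_\W(\A_{3,4})$, I would argue that in each listed pair the two modules are isomorphic $\Un(2)$-modules (by part b) of the proposition), that $\pr_\W$ is nonzero on each (a single nonzero sample element suffices), and that the ambient multiplicity of that isomorphism type inside $\W$ is exactly one, so by Schur the two images must coincide as the unique copy; the explicit formulas for $\theta_1,\dots,\theta_5$ let me exhibit the needed sample elements and check nonvanishing of the $\m$-component.

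The main obstacle I expect is the multiplicity bookkeeping inside $\W$ needed to justify the four ``equality of images'' claims: one must verify that each relevant isomorphism type occurs with multiplicity exactly one in $\Lambda^1\ox\m$, since otherwise two isomorphic images need not be the same submodule. If a multiplicity turns out to be two (which can happen for the $4$-dimensional type $\A_{1,2}\cong\A_{2,4}\cong\A_{3,4}\cong\A_{3,5}$, appearing from both $\Lambda^1_2\ox\Lambda^2_2$ and $\Lambda^1_2\ox\Lambda^2_4$ as well as $\Lambda^1_1\ox\Lambda^1_2$-type contributions), then the coincidence of the two images has to be pinned down by an explicit linear-algebra computation rather than by Schur's lemma alone — concretely, by writing out $\pr_\W\theta_{1}(\alpha)$, $\pr_\W\theta_{4}(\alpha)$, $\pr_\W\theta_{5}(\alpha)$, $\pr_\W\theta_2(\cdot)$ on a basis of $\Lambda^1_2$ and checking the spans agree. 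Once these explicit checks are arranged, the rest of the proof is routine verification against the dimension table of the preceding proposition.
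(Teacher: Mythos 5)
Your overall strategy (equivariance of $\pr_{\W}$, Schur's lemma to reduce each image to ``zero or an irreducible copy'', then identification by inspection of the explicit maps $\theta_1,\ldots,\theta_5$ and the $\vphi$-behaviour of $\Lambda^2_1\op\Lambda^2_3$ versus $\Lambda^2_2\op\Lambda^2_4$) is sound and is essentially the direct verification the paper relies on; in particular your arguments that $\A_{1,1},\A_{2,2},\A_{3,2},\A_{3,6},\A_{3,7},\A_{3,8}$ already lie in $\Lambda^1\ox\m$ and that the defining relation of $\A_{3,9}$ forces every form-slot into $\Lambda^2_1\op\Lambda^2_3=\un(2)$, hence into $\ker\pr_{\W}$, are correct.

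The one point you must repair is the mechanism for the four identities $\pr_{\W}(\A_{1,2})=\pr_{\W}(\A_{3,5})$, $\pr_{\W}(\A_{2,1})=\pr_{\W}(\A_{3,1})$, $\pr_{\W}(\A_{2,3})=\pr_{\W}(\A_{3,3})$, $\pr_{\W}(\A_{2,4})=\pr_{\W}(\A_{3,4})$. You present ``multiplicity one in $\W$, hence Schur'' as the main route and the explicit computation as a contingency, but the contingency is in fact the only route in \emph{all four} cases: by proposition \ref{prop:1} the relevant isotypes each occur with multiplicity exactly two in $\W$ (the trivial type as $\W_1\op\W_3$, the type $\lb\llbracket\lambda^{1,0}\rb\rrbracket$ as $\W_2\op\W_6$, the type $\lb[\lambda^{1,1}_0\rb]$ as $\W_5\op\W_8$), and a multiplicity-two isotypic component contains infinitely many irreducible submodules, so two isomorphic nonzero images need not coincide. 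Consequently each of the four equalities requires the explicit evaluation you describe — e.g.\ comparing $\pr_{\W}\theta_1(\alpha)$ with $\pr_{\W}\theta_5(\alpha)$ and $\pr_{\W}\theta_2(\Phi\wedge\eta)$ with $\pr_{\W}\theta_3(\Phi)$ on basis elements — together with the observation (Schur again) that $\pr_{\W}$ restricted to each irreducible summand is either zero or injective. With that reorganization the proof is complete; nothing else in your outline fails.
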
\noindent
This lemma, together with the definition of
\ba
\W_1&:=\A_{1,1}, &
\W_2&:=\pr_{\W}\lb(\A_{1,2}\rb), &
\W_3&:=\pr_{\W}\lb(\A_{2,1}\rb), &
\W_4&:=\A_{2,2}, \\
\W_5&:=\pr_{\W}\lb(\A_{2,3}\rb), &
\W_6&:=\pr_{\W}\lb(\A_{2,4}\rb), &
\W_7&:=\A_{3,2}, &
\W_8&:=\A_{3,6}, \\
\W_9&:=\A_{3,7}, &
\W_{10}&:=\A_{3,8},
\ea
leads to
\begin{prop}\label{prop:1}
The space $\W=\Lambda^1\ox\m$ splits into $10$ irreducible $\Un(2)$-modules:
\be
\W=\W_1\op\ldots\op\W_{10}.
\ee
\end{prop}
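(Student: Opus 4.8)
The plan is to deduce \autoref{prop:1} as a direct consequence of \autoref{lem:1} together with the decomposition of $\A$ established in the first proposition of this section. First I would observe that $\W = \Lambda^1\ox\m = \pr_\W(\Lambda^1\ox\Lambda^2) = \pr_\W(\A)$, since $\pr_\W$ acts trivially on the first tensor factor and surjects $\Lambda^2$ onto $\m$. Because $\A = \A_{1,1}\op\A_{1,2}\op\A_{2,1}\op\ldots\op\A_{2,4}\op\A_{3,1}\op\ldots\op\A_{3,9}$ as $\Un(2)$-modules and $\pr_\W$ is $\Un(2)$-equivariant, we get $\W = \sum_{i,j}\pr_\W(\A_{i,j})$, a (not yet direct) sum of $\Un(2)$-submodules. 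The task is then to extract an irreducible direct-sum decomposition from this spanning set.

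The key step is to use \autoref{lem:1} to identify which of the images $\pr_\W(\A_{i,j})$ coincide and which vanish. From the lemma, $\pr_\W(\A_{3,9}) = \{0\}$, so $\A_{3,9}$ contributes nothing. The remaining fourteen images collapse in pairs exactly as recorded in \autoref{lem:1}: $\pr_\W(\A_{1,2}) = \pr_\W(\A_{3,5})$, $\pr_\W(\A_{2,1}) = \pr_\W(\A_{3,1})$, $\pr_\W(\A_{2,3}) = \pr_\W(\A_{3,3})$, $\pr_\W(\A_{2,4}) = \pr_\W(\A_{3,4})$, while $\pr_\W(\A_{1,1}) = \A_{1,1}$, $\pr_\W(\A_{2,2}) = \A_{2,2}$, $\pr_\W(\A_{3,2}) = \A_{3,2}$, $\pr_\W(\A_{3,6}) = \A_{3,6}$, $\pr_\W(\A_{3,7}) = \A_{3,7}$ and $\pr_\W(\A_{3,8}) = \A_{3,8}$. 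Collecting one representative for each coincidence class gives the ten submodules $\W_1,\ldots,\W_{10}$ as defined in the text just before the statement. Each $\W_k$ is either one of the $\A_{i,j}$ unchanged (hence irreducible) or the $\pr_\W$-image of some $\A_{i,j}$; in the latter case $\pr_\W$ restricted to that $\A_{i,j}$ is a nonzero $\Un(2)$-equivariant map out of an irreducible module, so by Schur it is injective and $\W_k$ is irreducible, isomorphic to $\A_{i,j}$.

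It remains to check that the sum $\W = \W_1\op\ldots\op\W_{10}$ is direct. I would do this by a dimension count: using part (a) of the first proposition, $\dim\A_{1,1} = \dim\A_{2,1} = \dim\A_{3,1} = 1$, $\dim\A_{2,2} = \dim\A_{3,2} = 2$, $\dim\A_{2,3} = \dim\A_{3,3} = \dim\A_{3,6} = 3$, $\dim\A_{1,2} = \dim\A_{2,4} = \dim\A_{3,4} = \dim\A_{3,5} = \dim\A_{3,7} = 4$, $\dim\A_{3,8} = 6$, and $\dim\A_{3,9} = 8$; since each $\W_k$ is isomorphic to the corresponding $\A_{i,j}$ its dimension is known, and the ten dimensions sum to $1+4+1+2+3+4+2+3+4+6 = 30 = \dim(\Lambda^1\ox\m) = 5\cdot 6$. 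Because the spanning submodules already exhaust the ambient dimension, the sum must be direct and must equal all of $\W$. The only real subtlety — and the place to be careful — is making sure that the equivariant maps producing the non-obvious $\W_k$ (those of the form $\pr_\W(\A_{2,3})$, $\pr_\W(\A_{2,4})$, $\pr_\W(\A_{1,2})$, $\pr_\W(\A_{2,1})$) really are nonzero, so that Schur applies and no irreducible summand of $\W$ is accidentally missed; this is already guaranteed by the content of \autoref{lem:1}, which asserts these images equal nonzero modules $\pr_\W(\A_{3,j})$, so the argument closes.
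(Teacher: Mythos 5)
Your proposal is correct and follows essentially the same route as the paper, which derives the proposition directly from Lemma \ref{lem:1} together with the $15$-fold decomposition of $\A$ and the definitions of $\W_1,\ldots,\W_{10}$; you merely make explicit the Schur-lemma and dimension-count details ($\sum_k\dim\W_k=30=\dim(\Lambda^1\ox\m)$) that the paper leaves implicit. Note only that the non-vanishing of the images you worry about at the end is itself forced by your dimension count, so that step needs no separate appeal to Lemma \ref{lem:1}.
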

%
%
%
%------------------------------------------------------------------------------------------
%
\section{Almost contact metric structures}\label{sec:3}\noindent
Let $\lb(M^{2k+1},g\rb)$ be a $(2k+1)$-dimensional Riemannian manifold. An \emph{almost contact metric structure} on $\lb(M^{2k+1},g\rb)$ consists of a vector field $\xi$ of length one, its dual $1$-form $\eta$ and an endomorphism $\vphi$ of the tangent bundle such that
\ba
\vphi\lb(\xi\rb)&=0, & \vphi^2&=-\Id+\eta\ox\xi, &
g\lb(\vphi\lb(X\rb),\vphi\lb(Y\rb)\rb)=g\lb( X,Y\rb)-\eta\lb(X\rb)\eta\lb(Y\rb).
\ea
Equivalently, these structures can be defined as a reduction of the structure group of orthonormal frames of the tangent bundle to $\Un\left(k\right)$ (see \cites{Gra59,SH61,SH62}). The \emph{fundamental form} $\Phi$ of an \emph{almost contact metric manifold} $\lb(M^{2k+1},g,\xi,\eta,\vphi\rb)$ is a $2$-form defined by
\be
\Phi\lb(X,Y\rb):=g\lb(X,\vphi\lb(Y\rb)\rb). 
\ee

Consider an almost contact metric $5$-manifold $\lb(M^5,g,\xi,\eta,\vphi\rb)$. The corresponding fundamental form satisfies $\eta\wedge\Phi\wedge\Phi\neq0$. Consequently, there exists an oriented orthonormal frame $\left(e_1,\ldots,e_5\right)$ realizing the local model introduced in \autoref{sec:2}, i.e.\
\ba
\xi&=e_5, & \Phi&=e_{12}+e_{34}.
\ea
Here and henceforth we identify $TM^5$ with its dual space ${TM^5}^{\ast}$ using $g$. Moreover, we use the notation $e_{i_1\ldots i_j}$ for the exterior product
\be
e_{i_1}\wedge\ldots\wedge e_{i_j}.
\ee
We call $\left(e_1,\ldots,e_5\right)$ an \emph{adapted frame} of the almost contact metric manifold. The connection forms
\be
\omega_{ij}^g:=g\lb(\nabla^{g}e_i,e_j\rb)
\ee
of the Levi-Civita connection $\nabla^g$ define a $1$-form 
\be
\Omega^g:=\lb(\omega_{ij}^g\rb)_{1\leq i,j\leq 5}
\ee
with values in the Lie algebra $\so(5)$. We define the \emph{intrinsic torsion} $\Gamma$ of the almost contact metric manifold as
\be
\Gamma := \pr_\m\lb(\Omega^g\rb).
\ee
Since the Riemannian covariant derivative of the fundamental form $\Phi$ is given by
\be
\nabla^{g}\Phi = \vrho_\ast\lb(\Gamma\rb)\lb(\Phi\rb),
\ee
almost contact metric manifolds can be classified according to the algebraic type of $\Gamma$ with respect to the decomposition of $\Lambda^1\ox\m$ into irreducible $\Un(2)$-modules (cf.\ \cite{Fri03}). Applying proposition \ref{prop:1}, we split $\Gamma$ as
\be
\Gamma = \Gamma_1+\ldots+\Gamma_{10},
\ee
to the effect that $2^{10}=1024$ classes arise. We say that the almost contact metric manifold \emph{is of class} $\W_{i_1}\op\ldots\op\W_{i_k}$ if
\be
\Gamma\in\W_{i_1}\op\ldots\op\W_{i_k}.
\ee
Moreover, $\lb(M^5,g,\xi,\eta,\vphi\rb)$ \emph{is of strict class} $\W_{i_1}\op\ldots\op\W_{i_k}$ if it is of class $\W_{i_1}\op\ldots\op\W_{i_k}$ and $\Gamma_{i_j}\neq0$. Almost contact metric manifolds with $\Gamma=0$ are called \emph{integrable}. The \emph{Nijenhuis tensor} $N$ of $\lb(M^5,g,\xi,\eta,\vphi\rb)$ is defined by
\be
N\lb(X,Y\rb):=\lb[\vphi,\vphi\rb]\lb(X,Y\rb)+d\eta\lb(X,Y\rb)\cdot\xi, 
\ee
where $\lb[\vphi,\vphi\rb]$ is the Nijenhuis torsion of $\vphi$,
\be
\lb[\vphi,\vphi\rb]\lb(X,Y\rb)=
\lb[\vphi\lb(X\rb),\vphi\lb(Y\rb)\rb]
+\vphi^2\lb(\lb[X,Y\rb]\rb)
-\vphi\lb(\lb[\vphi\lb(X\rb),Y\rb]\rb)
-\vphi\lb(\lb[X,\vphi\lb(Y\rb)\rb]\rb),
\ee
and the differential $d\alpha$ and the co-differential $\delta\alpha$ of a $k$-form $\alpha$ are given by
\ba
d\alpha&=\sum_i e_i\wedge\nabla^{g}_{e_i}\alpha, &
\delta\alpha&=-\sum_i e_i\hook\nabla^{g}_{e_i}\alpha.
\ea
%
%--------------------------------------------------------------------------------------------
%
\subsection{Classification schemes}\label{sec:3.1}
We classified $\lb(M^5,g,\xi,\eta,\vphi\rb)$ with respect to the algebraic type of its intrinsic torsion tensor $\Gamma$. In this subsection, we relate this scheme to the Chinea-Gonzalez classification \cite{CG90} and the Chinea-Marrero classification \cite{CM92} of almost contact metric manifolds. Motivated by
\be
\lb(\nabla^g_X\Phi\rb)\lb(Y,Z\rb)=-
\lb(\nabla^g_X\Phi\rb)\lb(\vphi\lb(Y\rb),\vphi\lb(Z\rb)\rb)+\eta\lb(Y\rb)\lb(\nabla^g_X\Phi\rb)\lb(\xi,Z\rb)\\
+\eta\lb(Z\rb)\lb(\nabla^g_X\Phi\rb)\lb(Y,\xi\rb),
\ee
the authors of \cite{CG90} decompose the subspace
\ba
\mca{C}=\lb\{A\in\Lambda^1\ox\Lambda^2\setsep A\lb(X,Y,Z\rb)=-
A\lb(X,\vphi\lb(Y\rb),\vphi\lb(Z\rb)\rb)\rb.&+\eta\lb(Y\rb)A\lb(X,\xi,Z\rb)\\
&\lb.+\eta\lb(Z\rb)A\lb(X,Y,\xi\rb)\rb\}
\ea
of $\Lambda^1\ox\Lambda^2$ into $10$ irreducible $\Un(2)$-modules:
\be
\mca{C}=\mca{C}_2\op\mca{C}_4\op\ldots\op\mca{C}_{12}.
\ee
Although $\mca{C}$ coincides with $\W$, this decomposition differs from the one in proposition \ref{prop:1}. We have the following isomorphisms between the spaces $\mca{C}_2,\mca{C}_4,\ldots,\mca{C}_{12}$ of \cite{CG90} and $\W_1,\ldots,\W_{10}$:
\ba
\W_1&\cong\W_3\cong\mca{C}_5\cong\mca{C}_6, &
\W_2&\cong\W_6\cong\mca{C}_4\cong\mca{C}_{12}, &
\W_4&\cong\W_7\cong\mca{C}_{10}\cong\mca{C}_{11}, \\
\W_5&\cong\W_8\cong\mca{C}_7\cong\mca{C}_8, &
\W_9&\cong\mca{C}_2, &
\W_{10}&\cong\mca{C}_9.
\ea
Moreover, $\lb(M^5,g,\xi,\eta,\vphi\rb)$ is said to be of class $\mca{C}_{i_1}\op\ldots\op\mca{C}_{i_k}$ if
\be
\nabla^{g}\Phi\in\mca{C}_{i_1}\op\ldots\op\mca{C}_{i_k}.
\ee
\begin{thm}
Let $\lb(M^5,g,\xi,\eta,\vphi\rb)$ be a $5$-dimensional almost contact metric manifold.
Then the following equivalences hold:
\btab{c|c|c|c|c|c|c|c|c|c}
$\lb(M^5,g,\xi,\eta,\vphi\rb)$ is of class & $\mca{C}_2$ & $\mca{C}_4$ & $\mca{C}_5$
& $\mca{C}_6$ & $\mca{C}_7$ & $\mca{C}_8$ & $\mca{C}_9$ & $\mca{C}_{10}\op\mca{C}_{11}$ & $\mca{C}_4\op\mca{C}_{12}$\\\hline
if and only if it is of class & $\W_9$ & $\W_6$ & $\W_1$ & $\W_3$ & $\W_5$
& $\W_8$ & $\W_{10}$ & $\W_4\op\W_7$ & $\W_2\op\W_6$ 
\etab
Moreover, if $\lb(M^5,g,\xi,\eta,\vphi\rb)$ is not integrable and either of class $\mca{C}_{10}$ or of class $\mca{C}_{11}$, then $\lb(M^5,g,\xi,\eta,\vphi\rb)$ is of strict class $\W_4\op\W_7$. Any non-integrable almost contact metric $5$-manifold of class $\mca{C}_{12}$ is of strict class $\W_2\op\W_6$.
\end{thm}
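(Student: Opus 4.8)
The plan is to realise both classification schemes inside a single $\Un(2)$-equivariant identification of $\W$ with $\mca{C}$ and then to match the two decompositions summand by summand. First I would note that $\Gamma\mapsto\vrho_\ast(\Gamma)(\Phi)$ --- where on $\Gamma=\sum_ie_i\ox\gamma_i$, with $\gamma_i\in\m$, the operator $\vrho_\ast(\cdot)(\Phi)$ acts in the $\m$-slot --- defines a $\Un(2)$-equivariant map $\Psi\colon\W\ra\Lambda^1\ox\Lambda^2$, since $\Phi$ is $\Un(2)$-invariant and $\vrho_\ast$, $\sigma_1$ are equivariant. An element of $\Ker\Psi$ has $\vrho_\ast(\gamma_i)(\Phi)=0$; as $\Un(2)$ is exactly the isotropy group of $\Phi$, the kernel of $\omega\mapsto\vrho_\ast(\omega)(\Phi)$ on $\so(5)$ equals $\un(2)$, so $\gamma_i\in\m\cap\un(2)=\{0\}$ and $\Psi$ is injective. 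Because $\nabla^g\Phi=\vrho_\ast(\Gamma)(\Phi)=\Psi(\Gamma)$ and $\nabla^g\Phi$ always lies in $\mca{C}$ (by the Chinea-Gonzalez identity displayed above), we have $\Ima\Psi\subseteq\mca{C}$, and since $\dim\W=\dim\mca{C}$ the map $\Psi$ is a $\Un(2)$-equivariant isomorphism onto $\mca{C}$. Hence $\lb(M^5,g,\xi,\eta,\vphi\rb)$ is of class $\mca{C}_{i_1}\op\ldots\op\mca{C}_{i_k}$ exactly when $\Gamma\in\Psi^{-1}\lb(\mca{C}_{i_1}\op\ldots\op\mca{C}_{i_k}\rb)$, so the whole theorem reduces to identifying the submodules $\Psi^{-1}\lb(\mca{C}_i\rb)\subseteq\W$.

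By Schur's lemma, together with the isomorphisms and dimensions recorded above, each $\Psi^{-1}\lb(\mca{C}_i\rb)$ is an irreducible submodule of $\W$ contained in the isotypic component of the same type. The modules $\mca{C}_2$ and $\mca{C}_9$ occur with multiplicity one in $\W_1\op\ldots\op\W_{10}$ --- the only summand isomorphic to $\mca{C}_2$ is $\W_9$, the only one isomorphic to $\mca{C}_9$ is $\W_{10}$ --- whence $\Psi^{-1}\lb(\mca{C}_2\rb)=\W_9$ and $\Psi^{-1}\lb(\mca{C}_9\rb)=\W_{10}$, settling two columns of the table. The four remaining isotypic components of $\W$ are $\W_1\op\W_3$, $\W_2\op\W_6$, $\W_4\op\W_7$ and $\W_5\op\W_8$, to be matched with $\lb\{\mca{C}_5,\mca{C}_6\rb\}$, $\lb\{\mca{C}_4,\mca{C}_{12}\rb\}$, $\lb\{\mca{C}_{10},\mca{C}_{11}\rb\}$ and $\lb\{\mca{C}_7,\mca{C}_8\rb\}$; on each, representation theory no longer separates the summands and an explicit computation is needed.

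For that computation I would fix an adapted frame, pick a generator of each $\W_k$ from the explicit images of $\theta_1,\ldots,\theta_5$ --- for instance $\W_1=\theta_1\lb(\R\,\eta\rb)$, $\W_3=\pr_{\W}\theta_2\lb(\R\,{\ast}\Phi\rb)$, $\W_5=\pr_{\W}\lb(\A_{2,3}\rb)$, $\W_8=\A_{3,6}$, and analogously for $\W_2$, $\W_4$, $\W_6$, $\W_7$ --- apply $\Psi$, and test the resulting tensor $\nabla^g\Phi$ against the defining linear relations of the $\mca{C}_i$ recalled from \cite{CG90}. This should give $\Psi\lb(\W_1\rb)=\mca{C}_5$, $\Psi\lb(\W_3\rb)=\mca{C}_6$, $\Psi\lb(\W_6\rb)=\mca{C}_4$, $\Psi\lb(\W_5\rb)=\mca{C}_7$ and $\Psi\lb(\W_8\rb)=\mca{C}_8$, whereas $\Psi\lb(\W_2\rb)$, $\Psi\lb(\W_4\rb)$ and $\Psi\lb(\W_7\rb)$ turn out ``diagonal'' with respect to the Chinea-Gonzalez splitting of their isotypic components; in particular $\Psi^{-1}\lb(\mca{C}_{10}\op\mca{C}_{11}\rb)=\W_4\op\W_7$ and $\Psi^{-1}\lb(\mca{C}_4\op\mca{C}_{12}\rb)=\W_2\op\W_6$, which completes the table. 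On $\W_1\op\W_3$, on which $\Un(2)$ acts trivially, the pairing $\mca{C}_5\leftrightarrow\W_1$, $\mca{C}_6\leftrightarrow\W_3$ is not forced by any symmetry and is decided solely by these formulae.

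For the strict-class assertions I would use that, by the previous step, $\Psi^{-1}\lb(\mca{C}_{10}\rb)$ and $\Psi^{-1}\lb(\mca{C}_{11}\rb)$ are irreducible submodules of $\W_4\op\W_7$ equal to neither $\W_4$ nor $\W_7$; hence each of the projections $\W_4\op\W_7\ra\W_4$ and $\W_4\op\W_7\ra\W_7$ restricts on them to a nonzero, and therefore injective, $\Un(2)$-map. So if $\lb(M^5,g,\xi,\eta,\vphi\rb)$ is non-integrable and of class $\mca{C}_{10}$ or of class $\mca{C}_{11}$, then $\Gamma_4=\pr_{\W_4}\lb(\Gamma\rb)\neq0$ and $\Gamma_7=\pr_{\W_7}\lb(\Gamma\rb)\neq0$, i.e.\ it is of strict class $\W_4\op\W_7$; the same argument applied to $\W_2\op\W_6$ and the diagonal submodule $\Psi^{-1}\lb(\mca{C}_{12}\rb)$ gives the statement about class $\mca{C}_{12}$. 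The representation-theoretic steps are essentially automatic once $\Psi$ is available; the main obstacle is the adapted-frame calculation of the third paragraph --- recalling the precise defining conditions of the ten Chinea-Gonzalez modules and pushing explicit generators through $\vrho_\ast(\cdot)(\Phi)$ --- which is bookkeeping-heavy though routine, the one genuine subtlety being the $1$-dimensional isotypic component, where no representation-theoretic constraint applies.
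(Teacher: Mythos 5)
Your proposal is correct and follows essentially the same route as the paper, whose entire proof is the one-line observation that $\nabla^g_X\Phi=\vrho_\ast\lb(\Gamma\lb(X\rb)\rb)\lb(\Phi\rb)=\sigma_1\lb(\Gamma\lb(X\rb),\Phi\rb)$ identifies the two classification schemes; you have simply made explicit the $\Un(2)$-equivariant isomorphism $\W\cong\mca{C}$ that this formula induces and organized the remaining verifications by isotypic component. The Schur-lemma reduction, the identification of the multiplicity-one summands $\mca{C}_2\leftrightarrow\W_9$ and $\mca{C}_9\leftrightarrow\W_{10}$, and the diagonal-submodule argument for the strictness claims are all sound, and the only deferred work --- pushing explicit generators of the multiplicity-two components through $\sigma_1\lb(\cdot,\Phi\rb)$ --- is precisely the computation the paper also leaves to the reader.
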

\begin{proof}
The results are a direct consequence of
\be\label{eqn:1}\tag{$\star$}
\nabla^{g}_X\Phi = \vrho_\ast\lb(\Gamma\lb(X\rb)\rb)\lb(\Phi\rb)
=\sigma_1\lb(\Gamma\lb(X\rb),\Phi\rb),
\ee
valid for any $5$-dimensional almost contact metric manifold.
\end{proof}
Viewed as a $\left(3,0\right)$-tensor,
\be
N\lb(X,Y,Z\rb):=g\lb(X,N\lb(Y,Z\rb)\rb), 
\ee
the Nijenhuis tensor satisfies
\ba
N\lb(X,Y,Z\rb)&=-N\lb(X,\vphi\lb(Y\rb),\vphi\lb(Z\rb)\rb)+\eta\lb(Y\rb)N\lb(X,\xi,Z\rb)+\eta\lb(Z\rb)N\lb(X,Y,\xi\rb),\\
N\lb(X,Y,Z\rb)&=-N\lb(\vphi\lb(X\rb),\vphi\lb(Y\rb),Z\rb)+\eta\lb(X\rb)N\lb(\xi,Y,Z\rb)-\eta\lb(Y\rb)N\lb(\vphi\lb(X\rb),\xi,\vphi\lb(Z\rb)\rb).
\ea
The subspace $\mca{N}$ of tensors of $\Lambda^1\ox\Lambda^2$ satisfying these two conditions splits into five irreducible $\Un(2)$-modules (see \cite{CM92}),
\be
\mca{N}=\mca{N}_2\op\ldots\op\mca{N}_6.
\ee
Moreover, the authors of \cite{CM92} define $\lb(M^5,g,\xi,\eta,\vphi\rb)$ to be of class $\mca{N}_{i_1}\op\ldots\op\mca{N}_{i_k}$ if
\be
N\in\mca{N}_{i_1}\op\ldots\op\mca{N}_{i_k}.
\ee
Before we describe these classes in terms of the intrinsic torsion, we prove
\begin{lem}\label{lem:2}
The following formulae hold on $5$-dimensional almost contact metric manifolds $\lb(M^5,g,\xi,\eta,\vphi\rb)$:
\be
g\lb(\lb(\nabla^g_X\vphi\rb)\lb(Y\rb),Z\rb)=\lb(\nabla^g_X\Phi\rb)\lb(Z,Y\rb),
\ee
\be
\lb(\nabla^g_X\eta\rb)\lb(Y\rb)=g\lb(\nabla^g_X\xi,Y\rb)=\lb(\nabla^g_X\Phi\rb)\lb(\xi,\vphi\lb(Y\rb)\rb).
\ee
\end{lem}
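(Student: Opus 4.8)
The plan is to derive both identities directly from the definition of the fundamental form $\Phi(X,Y)=g(X,\vphi(Y))$ together with the fact that the Levi-Civita connection $\nabla^g$ is metric, i.e.\ it annihilates $g$ and satisfies the Leibniz rule with respect to tensor contractions.

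First I would establish the identity for $\vphi$. Since $\Phi(X,Y)=g(X,\vphi(Y))$, applying $\nabla^g_X$ and using that $\nabla^g g=0$ gives $(\nabla^g_X\Phi)(Y,Z)=g(Y,(\nabla^g_X\vphi)(Z))$ for all $Y,Z$. Relabelling $Y\leftrightarrow Z$ yields $(\nabla^g_X\Phi)(Z,Y)=g(Z,(\nabla^g_X\vphi)(Y))=g((\nabla^g_X\vphi)(Y),Z)$ by symmetry of $g$, which is exactly the first formula.

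Next I would treat $\eta$ and $\xi$. Because $\eta$ is the metric dual of $\xi$, the metric property of $\nabla^g$ immediately gives $(\nabla^g_X\eta)(Y)=g(\nabla^g_X\xi,Y)$. For the remaining equality I would start from $\vphi^2=-\Id+\eta\ox\xi$ applied to $Y$, namely $\vphi(\vphi(Y))=-Y+\eta(Y)\xi$, and combine it with the relation just proved for $\vphi$. Concretely, using $\vphi(\xi)=0$ one has $(\nabla^g_X\vphi)(\xi)=\nabla^g_X(\vphi(\xi))-\vphi(\nabla^g_X\xi)=-\vphi(\nabla^g_X\xi)$, so that by the first formula $(\nabla^g_X\Phi)(Z,\xi)=g((\nabla^g_X\vphi)(\xi),Z)=-g(\vphi(\nabla^g_X\xi),Z)=g(\nabla^g_X\xi,\vphi(Z))$ (using the skew-symmetry of $\vphi$ with respect to $g$, which follows from the third defining relation). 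Replacing $Z$ by $\vphi(Y)$ and invoking $g(\nabla^g_X\xi,\vphi(\vphi(Y)))=g(\nabla^g_X\xi,-Y+\eta(Y)\xi)=-g(\nabla^g_X\xi,Y)$, since $g(\nabla^g_X\xi,\xi)=\tfrac12 X(g(\xi,\xi))=0$, then rearranges to $(\nabla^g_X\Phi)(\xi,\vphi(Y))=g(\nabla^g_X\xi,Y)$, as claimed.

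The only mild obstacle is bookkeeping the sign conventions: one must be careful that $\Phi(X,Y)=g(X,\vphi(Y))$ entails $g(\vphi(X),Y)=-g(X,\vphi(Y))$ (skew-symmetry of $\vphi$), and keep track of the slot order in $(\nabla^g_X\Phi)(\,\cdot\,,\,\cdot\,)$ throughout. No analytic input is needed; everything is pointwise tensor algebra plus the Leibniz rule for $\nabla^g$.
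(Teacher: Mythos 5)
Your proposal is correct and follows essentially the same route as the paper: both derive the first identity from the Leibniz rule and metricity of $\nabla^g$, then obtain the second by combining $(\nabla^g_X\vphi)(\xi)=-\vphi(\nabla^g_X\xi)$ (from $\vphi(\xi)=0$) with the compatibility of $g$ and $\vphi$, the identity $g(\nabla^g_X\xi,\xi)=0$, and the skew-symmetry of $\nabla^g_X\Phi$. The only cosmetic difference is that you phrase the key step via the skew-adjointness $g(\vphi(X),Y)=-g(X,\vphi(Y))$ while the paper uses $g(\vphi(A),\vphi(B))=g(A,B)-\eta(A)\eta(B)$ directly; these are equivalent manipulations.
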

\begin{proof}
We compute
\ba
g\lb(\nabla^g_X\xi,Y\rb)&=g\lb(\nabla^g_X\xi,Y\rb)-\eta\lb(Y\rb)g\lb(\nabla^g_X\xi,\xi\rb)\\
&=g\lb(\nabla^g_X\xi,Y\rb)-\eta\lb(Y\rb)\eta\lb(\nabla^g_X\xi\rb)\\
&=g\lb(\vphi\lb(\nabla^g_X\xi\rb),\vphi\lb(Y\rb)\rb)\\
&=g\lb(\nabla^g_X\lb(\vphi\lb(\xi\rb)\rb),\vphi\lb(Y\rb)\rb)-g\lb(\lb(\nabla^g_X\vphi\rb)\lb(\xi\rb),\vphi\lb(Y\rb)\rb)\\
&=-g\lb(\lb(\nabla^g_X\vphi\rb)\lb(\xi\rb),\vphi\lb(Y\rb)\rb).
\ea
Moreover, we have
\ba
g\lb(\lb(\nabla^g_X\vphi\rb)\lb(Y\rb),Z\rb)&=g\lb(\nabla^g_X\lb(\vphi\lb(Y\rb)\rb),Z\rb)
-g\lb(\vphi\lb(\nabla^g_XY\rb),Z\rb)\\
&=X\lb(g\lb(\vphi\lb(Y\rb),Z\rb)\rb)-g\lb(\vphi\lb(Y\rb),\nabla^g_XZ\rb)-g\lb(\vphi\lb(\nabla^g_XY\rb),Z\rb)\\
&=X\lb(\Phi\lb(Z,Y\rb)\rb)-\Phi\lb(\nabla^g_XZ,Y\rb)-\Phi\lb(Z,\nabla^g_XY\rb)\\
&=\lb(\nabla^g_X\Phi\rb)\lb(Z,Y\rb).
\ea
Combining these two equations, we obtain the desired result.
\end{proof}\noindent
\begin{thm}\label{thm:1}
Let $\lb(M^5,g,\xi,\eta,\vphi\rb)$ be a $5$-dimensional almost contact metric manifold.
Then the following equivalences hold:
\btab{c|c}
$\lb(M^5,g,\xi,\eta,\vphi\rb)$ is of class & if and only if it is of class \\\hline
$\mca{N}_2$ & $\W_1\op\W_3\op\W_5\op\W_6\op\W_8\op\W_9$ \\\hline
$\mca{N}_3$ & $\W_1\op\W_3\op\W_5\op\W_6\op\W_8\op\W_{10}$ \\\hline
$\mca{N}_4\op\mca{N}_5$ & $\W_1\op\W_3\op\W_4\op\W_5\op\W_6\op\W_7\op\W_8$ \\\hline
$\mca{N}_6$ & $\W_1\op\W_2\op\W_3\op\W_5\op\W_6\op\W_8$
\etab
In particular, the Nijenhuis tensor of $\lb(M^5,g,\xi,\eta,\vphi\rb)$ vanishes if and only if the almost contact metric manifold is of class $\W_1\op\W_3\op\W_5\op\W_6\op\W_8$.
\end{thm}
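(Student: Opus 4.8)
The plan is to express the Nijenhuis tensor $N$ explicitly in terms of the intrinsic torsion $\Gamma$, then decompose the resulting map into irreducible $\Un(2)$-pieces and read off the kernel-conditions that cut out $\mca{N}_2,\ldots,\mca{N}_6$. The starting point is \autoref{lem:2}, which rewrites $\nabla^g\vphi$, $\nabla^g\eta$, and $\nabla^g\xi$ purely through $\nabla^g\Phi$; together with $(\star)$ this already gives $N$ as a $\Un(2)$-equivariant expression in $\Gamma$. Concretely, I would substitute $[\vphi(X),\vphi(Y)]=\nabla^g_{\vphi(X)}\vphi(Y)-\nabla^g_{\vphi(Y)}\vphi(X)$ and the analogous torsion-free rewritings of the other brackets into the definition of $[\vphi,\vphi]$, use $\vphi^2=-\Id+\eta\ox\xi$, and collect terms, always replacing a bare $\nabla^g_U\vphi$ or $\nabla^g_U\eta$ by the corresponding $\nabla^g_U\Phi=\sigma_1(\Gamma(U),\Phi)$ via \autoref{lem:2}. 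The outcome is a linear $\Un(2)$-map $\Phi_N:\W\to\mca{N}$, $\Gamma\mapsto N$, and the theorem is equivalent to computing $\Phi_N$ restricted to each $\W_i$.

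Next I would combine this with the two symmetry identities for $N$ stated just before \autoref{lem:2}: these show $N$ always lies in $\mca{N}$, so $\Phi_N$ is a map between a $10$-dimensional-multiplicity space and a $5$-piece space. Since all the modules in play have been identified in proposition \ref{prop:1} and in \cite{Fin95}, the map $\Phi_N$ is controlled by Schur's lemma: on each isotypic component it is multiplication by a scalar (or, where multiplicities coincide, a small matrix), and I only need to determine which scalars vanish. The cleanest way is to evaluate $\Phi_N$ on a single nonzero test element of each $\W_i$ — e.g. take $\Gamma$ to be a pure tensor $\alpha\ox\beta$ adapted to the defining equations of $\A_{1,1},\pr_{\W}(\A_{1,2}),\ldots,\A_{3,8}$ in the adapted frame $(e_1,\ldots,e_5)$ with $\xi=e_5$, $\Phi=e_{12}+e_{34}$ — and compute $\Phi_N$ of it componentwise. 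Comparing the resulting element of $\mca{N}$ with the known decomposition $\mca{N}=\mca{N}_2\op\ldots\op\mca{N}_6$ of \cite{CM92} tells me exactly which $\mca{N}_j$ each $\W_i$ feeds into; the four rows of the table are then obtained by listing, for each $j$, the $\W_i$ that map onto $\mca{N}_j$ nontrivially.

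The last row — $N=0$ iff the manifold is of class $\W_1\op\W_3\op\W_5\op\W_6\op\W_8$ — falls out as the intersection of the kernels appearing in the four rows: one checks that $\W_2,\W_4,\W_7,\W_9,\W_{10}$ are precisely the summands on which $\Phi_N$ is injective (each surjecting onto one of the $\mca{N}_j$), while $\Phi_N$ annihilates $\W_1\op\W_3\op\W_5\op\W_6\op\W_8$. Equivalently, since every $\mca{N}_j$ appears with the pattern above, $N=0$ forces $\Gamma_2=\Gamma_4=\Gamma_7=\Gamma_9=\Gamma_{10}=0$, and conversely these vanishing conditions make all four tables' right-hand membership conditions hold with $N\in\{0\}$.

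The main obstacle is the bookkeeping in the first step: carefully expanding $[\vphi,\vphi]+d\eta\cdot\xi$ into a closed-form $\Un(2)$-equivariant expression in $\Gamma$ without error, and then matching pure-tensor representatives of $\W_1,\ldots,\W_{10}$ against the \cite{CM92} modules $\mca{N}_2,\ldots,\mca{N}_6$ — in particular keeping track of the $\xi$- and $\eta$-terms, since several $\W_i$ are isomorphic as abstract $\Un(2)$-modules (e.g. $\W_1\cong\W_3$, $\W_2\cong\W_6$, $\W_5\cong\W_8$) and are only distinguished inside $N$ by how the $\vphi$-twists and the $\eta$-contractions act. Once the explicit formula for $N$ in terms of $\Gamma$ is in hand, the rest is a finite, representation-theoretic check that each of the claimed four equivalences holds.
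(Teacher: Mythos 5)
Your proposal is correct and follows essentially the same route as the paper: Lemma \ref{lem:2} and the torsion-freeness of $\nabla^g$ are used to rewrite $N$ as a universal expression in $\nabla^g\Phi$, which formula $(\star)$ then converts into a $\Un(2)$-equivariant linear map $\Gamma\mapsto N$ whose kernel and image components are read off against the Chinea--Marrero decomposition. The Schur-lemma/test-element bookkeeping you describe is exactly the (unwritten) computation behind the paper's closing sentence ``enables us to express the Nijenhuis tensor in terms of $\Gamma$.''
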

\begin{proof}
Utilizing lemma \ref{lem:2}, we compute
\ba
N\lb(X,Y,Z\rb)=&g\lb(X,\lb[\vphi\lb(Y\rb),\vphi\lb(Z\rb)\rb]
+\vphi^2\lb(\lb[Y,Z\rb]\rb)
-\vphi\lb(\lb[\vphi\lb(Y\rb),Z\rb]\rb)
-\vphi\lb(\lb[Y,\vphi\lb(Z\rb)\rb]\rb)\rb)\\
&+g\lb(X,d\eta\lb(Y,Z\rb)\cdot\xi\rb)\\
=&g\lb(X,\lb(\nabla^g_{\vphi\lb(Y\rb)}\vphi\rb)\lb(Z\rb)
-\lb(\nabla^g_{\vphi\lb(Z\rb)}\vphi\rb)\lb(Y\rb)
+\vphi\lb(\lb(\nabla^g_Z\vphi\rb)\lb(Y\rb)-\lb(\nabla^g_Y\vphi\rb)\lb(Z\rb)\rb)\rb)\\
&+g\lb(X,\xi\rb)\lb(\lb(\nabla^g_Y\eta\rb)\lb(Z\rb)-\lb(\nabla^g_Z\eta\rb)\lb(Y\rb)\rb)\\
=&\lb(\nabla^g_{\vphi\lb(Y\rb)}\Phi\rb)\lb(X,Z\rb)
-\lb(\nabla^g_{\vphi\lb(Z\rb)}\Phi\rb)\lb(X,Y\rb)
+\lb(\nabla^g_Y\Phi\rb)\lb(\vphi\lb(X\rb),Z\rb)\\
&-\lb(\nabla^g_Z\Phi\rb)\lb(\vphi\lb(X\rb),Y\rb)+\eta\lb(X\rb)\lb(\nabla^g_{Y}\Phi\rb)\lb(\xi,\vphi\lb(Z\rb)\rb)
-\eta\lb(X\rb)\lb(\nabla^g_{Z}\Phi\rb)\lb(\xi,\vphi\lb(Y\rb)\rb).
\ea
This equation, together with formula (\ref{eqn:1}), enables us to express the Nijenhuis tensor in terms of $\Gamma$.
\end{proof}
%
%--------------------------------------------------------------------------------------------
%
\subsection{Special types}\label{sec:3.2}
As indicated above, there are many types of almost contact metric manifolds. We introduce those carrying names and review them in the light of our classification scheme.

An almost contact metric manifold $\lb(M^5,g,\xi,\eta,\vphi\rb)$ is said to be
\bite
\item\emph{normal} (see \cites{SH61,SH62}) if its Nijenhuis tensor vanishes.
\item\emph{almost co-Kähler} (see \cite{JV81}) or \emph{almost cosymplectic} (see \cite{Oub85}) if
\ba
d\Phi&=0, & d\eta&=0.
\ea
In overlap to our next notion, the authors of \cites{JV81,Lib59} also use the term cosymplectic in this case.
\item\emph{co-Kähler} (see \cite{JV81}) or \emph{cosymplectic} (see \cite{Bla67}) if it is normal and almost cosymplectic, or equivalently if
\be
\lb(\nabla^g_X\vphi\rb)\lb(Y\rb)=0.
\ee
\item\emph{nearly cosymplectic} (see \cite{Bla71}) if
\be
\lb(\nabla^g_X\vphi\rb)\lb(X\rb)=0.
\ee
\item\emph{semi-cosymplectic} (see \cite{CG90}) if
\ba
\delta\Phi&=0, & \delta\eta&=0.
\ea
\item\emph{quasi-cosymplectic} (see \cite{CM92}) if
\be
\lb(\nabla^g_X\vphi\rb)\lb(Y\rb)+\lb(\nabla^g_{\vphi\lb(X\rb)}\vphi\rb)\lb(\vphi\lb(Y\rb)\rb)=\eta\lb(Y\rb)\cdot\nabla^g_{\vphi\lb(X\rb)}\xi.
\ee
\item\emph{almost $\alpha$-Kenmotsu} (see \cite{JV81}) if
\ba
d\Phi&=2\alpha\cdot\Phi\wedge\eta, & d\eta&=0
\ea
for $\alpha\in\R\backslash\lb\{0\rb\}$.
\item\emph{$\alpha$-Kenmotsu} (see \cite{JV81}) if it is normal and almost $\alpha$-Kenmotsu, or equivalently if
\be
\lb(\nabla^g_X\vphi\rb)\lb(Y\rb)=\alpha\lb(g\lb(\vphi\lb(X\rb),Y\rb)\cdot\xi-\eta\lb(Y\rb)\cdot\vphi\lb(X\rb)\rb)
\ee
for $\alpha\in\R\backslash\lb\{0\rb\}$.
\item\emph{almost Kenmotsu} (see \cites{Ken72,JV81}) if it is almost $1$-Kenmotsu.
\item\emph{Kenmotsu} (see \cites{Ken72,JV81}) if it is $1$-Kenmotsu.
\item\emph{almost $\alpha$-Sasaki} (see \cite{JV81}) if
\ba
d\Phi&=0, & d\eta&=2\alpha\cdot\Phi
\ea
for $\alpha\in\R\backslash\lb\{0\rb\}$.
\item\emph{$\alpha$-Sasaki} (see \cite{JV81}) if it is normal and almost $\alpha$-Sasaki, or equivalently if
\be
\lb(\nabla^g_X\vphi\rb)\lb(Y\rb)=\alpha\lb(g\lb(X,Y\rb)\cdot\xi-\eta\lb(Y\rb)\cdot X\rb)
\ee
for $\alpha\in\R\backslash\lb\{0\rb\}$.
\item\emph{almost Sasaki} (see \cite{JV81}) or \emph{contact metric} (see \cite{Bla76}) if it is almost $1$-Sasaki.
\item\emph{Sasaki} (see \cites{SH61,SH62,Bla76}) if it is $1$-Sasaki.
\item\emph{quasi-Sasaki} (see \cite{Bla67}) if it is normal and
\be
d\Phi=0.
\ee
\item\emph{nearly Sasaki} (see \cite{Bla76}) if
\be
\lb(\nabla^g_X\vphi\rb)\lb(X\rb)=g\lb(X,X\rb)\cdot\xi-\eta\lb(X\rb)\cdot X.
\ee
\item\emph{trans-Sasaki} (see \cite{Oub85}) if
\ba
4\cdot\lb(\nabla^g_X\Phi\rb)\lb(Y,Z\rb)&=g\lb(X,\delta\Phi\lb(\xi\rb)\cdot Z+\delta\eta\cdot\vphi\lb(Z\rb)\rb)\eta\lb(Y\rb)\\
&\phantom{=\,}-g\lb(X,\delta\Phi\lb(\xi\rb)\cdot Y+\delta\eta\cdot\vphi\lb(Y\rb)\rb)\eta\lb(Z\rb).
\ea
\item\emph{K-contact} (see \cite{Bla76}) if it is contact metric and $\xi$ is a Killing vector field with respect to $g$.
\eite
\begin{thm}\label{thm:2}
Let $\lb(M^5,g,\xi,\eta,\vphi\rb)$ be a $5$-dimensional almost contact metric manifold.
Then the following hold:
\btab{c|c}
If $\lb(M^5,g,\xi,\eta,\vphi\rb)$ is & then it is of class \\\hline
normal & $\W_1\op\W_3\op\W_5\op\W_6\op\W_8$\\\hline
almost cosymplectic & $\W_9\op\W_{10}$\\\hline
nearly cosymplectic & $\W_4\op\W_7$\\\hline
semi-cosymplectic & $\W_2\op\W_4\op\W_5\op\W_6\op\W_7\op\W_8\op\W_9\op\W_{10}$\\\hline
quasi-cosymplectic & $\W_4\op\W_7\op\W_9\op\W_{10}$\\\hline
normal and semi-cosymplectic & $\W_5\op\W_8$\\\hline
almost $\alpha$-Kenmotsu & $\W_1\op\W_9\op\W_{10}$\\\hline
almost $\alpha$-Sasaki & $\W_3\op\W_9\op\W_{10}$\\\hline
quasi-Sasaki & $\W_3\op\W_5$\\\hline
nearly Sasaki & $\W_3\op\W_4\op\W_7$\\\hline
trans-Sasaki & $\W_1\op\W_3$\\\hline
K-contact & $\W_3\op\W_9$
\etab
Moreover, $\lb(M^5,g,\xi,\eta,\vphi\rb)$ is cosymplectic if and only if the almost contact metric manifold is integrable. If $\lb(M^5,g,\xi,\eta,\vphi\rb)$ is almost $\alpha$-Kenmotsu, almost $\alpha$-Sasaki, nearly Sasaki or K-contact, then the almost contact metric manifold is not integrable.
\end{thm}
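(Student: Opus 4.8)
The plan is to reduce every single row of the table to an explicit computation involving the master formula \eqref{eqn:1}, $\nabla^g_X\Phi = \sigma_1(\Gamma(X),\Phi)$, together with Lemma \ref{lem:2} and the module descriptions from \autoref{sec:2}. The key observation is that each defining condition in the list of "special types" is a linear condition on $\nabla^g\Phi$ (or, via Lemma \ref{lem:2}, on $\nabla^g\vphi$ or $\nabla^g\eta$), hence on $\Gamma$; since $\pr_\m\colon\Lambda^2\to\m$ is a $\Un(2)$-equivariant isomorphism onto its image and $\Gamma=\pr_\m(\Omega^g)$ takes values in $\m$, the problem is purely representation-theoretic once the conditions are transcribed. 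So first I would fix the adapted frame $(e_1,\dots,e_5)$ with $\xi=e_5$, $\Phi=e_{12}+e_{34}$, and write out $\Gamma(X)\in\m=\Lambda^2_2\op\Lambda^2_4$ in terms of its ten $\W_i$-components, using the explicit spanning $2$-forms $\Phi,\omega_1,\omega_2,\omega_3$ of $\un(2)$ and a basis of $\m$. Then I would compute $\sigma_1(\Gamma(X),\Phi)$ once and for all as a function of the $\Gamma_i$, obtaining $\nabla^g_X\Phi$, $d\Phi=\sum_i e_i\wedge\nabla^g_{e_i}\Phi$, $\delta\Phi=-\sum_i e_i\hook\nabla^g_{e_i}\Phi$, $\nabla^g\eta$ and $d\eta$ likewise.

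Second, I would handle the rows in order of increasing difficulty. The rows "normal" and "semi-cosymplectic" combined with Theorem \ref{thm:1} and the definitions of $d\Phi,d\eta,\delta\Phi,\delta\eta$ give "almost cosymplectic" ($d\Phi=d\eta=0$), "almost $\alpha$-Kenmotsu", "almost $\alpha$-Sasaki" directly, since these are vanishing/proportionality conditions on $d\Phi$ and $d\eta$, each of which lives in a sum of a few $\W_i$. For the conditions phrased via $\nabla^g\vphi$ — "nearly cosymplectic" ($(\nabla^g_X\vphi)(X)=0$), "nearly Sasaki", "cosymplectic", "quasi-cosymplectic" — I would use $g((\nabla^g_X\vphi)(Y),Z)=(\nabla^g_X\Phi)(Z,Y)$ from Lemma \ref{lem:2} to turn each into a symmetrization or trace condition on $\nabla^g\Phi$, then read off which $\W_i$-components must vanish. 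The "trans-Sasaki" row requires substituting the computed $\delta\Phi$ and $\delta\eta$ into the defining identity and checking it cuts $\Gamma$ down to $\W_1\op\W_3$; this is the most formula-heavy row. "quasi-Sasaki" is "normal" $\cap\ \{d\Phi=0\}$, hence $(\W_1\op\W_3\op\W_5\op\W_6\op\W_8)\cap(\W_3\op\W_5\op\dots)$ intersected appropriately; "K-contact" is "contact metric" ($d\eta=2\Phi$, forcing the $\W_3$-part to be the relevant constant) together with $\xi$ Killing, i.e.\ $\nabla^g\eta$ symmetric, which kills the remaining components except $\W_3\op\W_9$.

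Third, the final two assertions: cosymplectic $\Leftrightarrow$ integrable is immediate, since $(\nabla^g_X\vphi)(Y)=0$ is equivalent to $\nabla^g\Phi=0$, equivalent to $\sigma_1(\Gamma(X),\Phi)=0$ for all $X$, and since $\pr_\m$ restricted to the relevant module is injective this forces $\Gamma=0$; conversely $\Gamma=0$ gives $\nabla^g\Phi=0$ by \eqref{eqn:1}. The non-integrability claims for almost $\alpha$-Kenmotsu, almost $\alpha$-Sasaki, nearly Sasaki and K-contact follow because in each case the defining equation forces a \emph{nonzero} value on a specific component: $\Gamma=0$ would give $d\eta=0$ (contradicting $d\eta=2\alpha\Phi\neq0$ in the $\alpha$-Sasaki and K-contact cases), $d\Phi=0$ (contradicting $d\Phi=2\alpha\Phi\wedge\eta\neq0$ for $\alpha$-Kenmotsu), or $(\nabla^g_X\vphi)(X)=0$ (contradicting the nearly-Sasaki identity, whose right side is nonzero for $X\perp\xi$, $X\neq0$).

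The main obstacle I expect is purely computational bookkeeping: correctly identifying, for each of the ten modules $\W_1,\dots,\W_{10}$, the precise image $\sigma_1(\W_i,\Phi)\subset\Lambda^2$ and its behaviour under $d$ and $\delta$, so that the various linear conditions can be intersected cleanly. There is no conceptual difficulty — the equivariance of $\sigma_1(\cdot,\Phi)$ and of $d,\delta$ guarantees each condition is a sum of whole isotypic components — but getting the constants right (especially in the trans-Sasaki and Kenmotsu/Sasaki proportionality conditions, where a specific $\W_1$- or $\W_3$-component is pinned to a nonzero multiple of $\delta\eta$ or $\alpha$) is where care is needed.
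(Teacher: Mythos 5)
Your proposal is correct and follows essentially the same route as the paper: the published proof likewise reduces every row to Lemma \ref{lem:2}, Theorem \ref{thm:1} and the master formula (\ref{eqn:1}) expressing $\nabla^g\Phi$ through $\Gamma$, and works out only the quasi-Sasaki row (as normality intersected with $d\Phi=0$) as a representative example, exactly as you describe. The extra detail you supply on the remaining rows and on the non-integrability claims is just the bookkeeping the paper leaves implicit.
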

\begin{proof}
The results are a consequence of lemma \ref{lem:2}, theorem \ref{thm:1} and formula (\ref{eqn:1}). For example, suppose that $\lb(M^5,g,\xi,\eta,\vphi\rb)$ is quasi-Sasaki. Then, by theorem \ref{thm:1}, we have $\Gamma\in\W_1\op\W_3\op\W_5\op\W_6\op\W_8$. This, together with
\be
0=d\Phi\lb(X,Y,Z\rb)=\mfr{S}_{X,Y,Z}\lb(\nabla^{g}_{X}\Phi\rb)\lb(Y,Z\rb)
\ee
and formula (\ref{eqn:1}), leads to $\Gamma\in\W_3\op\W_5$.
\end{proof}\noindent
Using the same method, we are able to prove
\begin{thm}\label{thm:3}
Let $\lb(M^5,g,\xi,\eta,\vphi\rb)$ be a $5$-dimensional almost contact metric manifold.
Then the following hold:
\btab{c|c}
If $\lb(M^5,g,\xi,\eta,\vphi\rb)$ is of class & then it is \\\hline
$\W_1\op\W_3\op\W_5\op\W_6\op\W_8$ & normal\\\hline
$\W_9\op\W_{10}$ & quasi- and almost cosymplectic\\\hline
$\W_4\op\W_5\op\W_7\op\W_8\op\W_9\op\W_{10}$ & semi-cosymplectic\\\hline
$\W_3\op\W_5$ & quasi-Sasaki\\\hline
$\W_1\op\W_3$ & trans-Sasaki
\etab
\end{thm}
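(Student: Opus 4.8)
The plan is to establish the five implications one at a time, in each case reducing the defining geometric condition to a $\Un(2)$-equivariant linear constraint on the components $\Gamma_1,\dots,\Gamma_{10}$ of the intrinsic torsion and then deciding that constraint summand by summand. The engine is formula (\ref{eqn:1}), $\nabla^g_X\Phi=\sigma_1(\Gamma(X),\Phi)$, together with lemma \ref{lem:2}, which expresses $\nabla^g\vphi$, $\nabla^g\eta$ and $\nabla^g\xi$ through $\nabla^g\Phi$; inserting these into the formulae for $d$ and $\delta$ from \autoref{sec:3} turns $d\Phi$, $d\eta$, $\delta\Phi$, $\delta\eta$, the Nijenhuis tensor and the quasi-cosymplectic and trans-Sasaki tensors into $\Un(2)$-equivariant functions of $\Gamma$ (exactly as in the proofs of theorems \ref{thm:1} and \ref{thm:2}). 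Since each of these maps is equivariant, proving that membership in $\W_{i_1}\op\dots\op\W_{i_k}$ forces the condition amounts to checking that the associated map annihilates every summand $\W_{i_j}$; by Schur's lemma each summand can be tested on a single explicit representative written in an adapted frame.

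Row $1$ is immediate: ``of class $\W_1\op\W_3\op\W_5\op\W_6\op\W_8$ implies normal'' is precisely the ``in particular'' part of theorem \ref{thm:1}. Row $4$: since $\W_3\op\W_5\subset\W_1\op\W_3\op\W_5\op\W_6\op\W_8$, normality follows from theorem \ref{thm:1}, and it remains to check $d\Phi=0$; but $d\Phi(X,Y,Z)=\mfr{S}_{X,Y,Z}\sigma_1(\Gamma(X),\Phi)(Y,Z)$ exhibits $\Gamma\mapsto d\Phi$ as an equivariant map $\W\to\Lambda^3$ whose kernel contains $\W_3$ and $\W_5$ --- this is exactly the computation in the quasi-Sasaki part of the proof of theorem \ref{thm:2}, read in the reverse direction. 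Row $5$ is the exact converse of the trans-Sasaki entry of theorem \ref{thm:2}: the class $\W_1\op\W_3$ is two-dimensional (both $\W_1$ and $\W_3$ are trivial $\Un(2)$-modules), the only invariants occurring in the trans-Sasaki identity are the scalars $\delta\Phi(\xi)$ and $\delta\eta$, and evaluating $\sigma_1(\Gamma(X),\Phi)$ on this two-parameter family shows that $\nabla^g\Phi$ automatically has the prescribed shape, with $\delta\Phi(\xi)$ and $\delta\eta$ as the two parameters; an adapted-frame trace computation then checks that these parameters are indeed the stated co-differentials.

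Rows $2$ and $3$ carry the actual work. For row $2$ one evaluates $\nabla^g_X\Phi=\sigma_1(\Gamma(X),\Phi)$ on $\W_9=\A_{3,7}$ and $\W_{10}=\A_{3,8}$, using the $\vphi$-behaviour of these modules encoded in their defining relations in \autoref{sec:2.2}; this yields $\mfr{S}_{X,Y,Z}(\nabla^g_X\Phi)(Y,Z)=0$, hence $d\Phi=0$, and via lemma \ref{lem:2} also $d\eta=0$, so the manifold is almost cosymplectic. Rewriting the quasi-cosymplectic tensor $(\nabla^g_X\vphi)(Y)+(\nabla^g_{\vphi(X)}\vphi)(\vphi(Y))-\eta(Y)\nabla^g_{\vphi(X)}\xi$ through $\nabla^g\Phi$ with lemma \ref{lem:2} and substituting then shows it vanishes on $\A_{3,7}\op\A_{3,8}$ as well. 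For row $3$ one computes the equivariant maps $\Gamma\mapsto\delta\Phi\in\Lambda^1_1\op\Lambda^1_2$ and $\Gamma\mapsto\delta\eta\in\R$: by Schur only the trivial summands $\W_1,\W_3$ can feed the $\Lambda^1_1$-part of $\delta\Phi$ and the scalar $\delta\eta$, while only the two summands isomorphic to $\A_{1,2}$, namely $\W_2$ and $\W_6$, can feed the $\Lambda^1_2$-part; hence $\delta\Phi$ and $\delta\eta$ depend on $\Gamma$ only through $\Gamma_1,\Gamma_2,\Gamma_3,\Gamma_6$, and a check on representatives shows both vanish once these four components do, i.e. on $\W_4\op\W_5\op\W_7\op\W_8\op\W_9\op\W_{10}$. (This class is strictly smaller than the one appearing in theorem \ref{thm:2} because the full semi-cosymplectic locus is cut out by a relation mixing the two isomorphic copies $\W_2$ and $\W_6$ and so is not itself a sum of the $\W_i$.)

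The main obstacle will be the bookkeeping of which irreducible summand of $\W$ can appear in each of $d\Phi$, $d\eta$, $\delta\Phi$, $\delta\eta$ and the quasi-cosymplectic tensor. This is purely representation-theoretic once the dimensions and isomorphism types of the modules $\A_{i,j}$ are available, together with lemma \ref{lem:1}, which transports them to the $\W_i$: multiplicity counting in the small target spaces $\Lambda^1,\Lambda^2,\Lambda^3$ forces most summands into the kernel automatically, so that only a handful of explicit adapted-frame evaluations remain. Beyond the ``$\nabla^g\Phi=\sigma_1(\Gamma,\Phi)$ plus Schur'' mechanism already used for theorems \ref{thm:1} and \ref{thm:2}, no new ingredient is needed.
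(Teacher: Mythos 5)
Your proposal is correct and follows essentially the same route as the paper: the author proves theorem \ref{thm:3} "using the same method" as theorem \ref{thm:2}, i.e.\ by expressing $d\Phi$, $d\eta$, $\delta\Phi$, $\delta\eta$, the Nijenhuis tensor and the remaining defining tensors through formula (\ref{eqn:1}) and lemma \ref{lem:2}, invoking theorem \ref{thm:1} for normality, and checking the resulting equivariant conditions summand by summand. Your explicit Schur-lemma bookkeeping (and the remark explaining why the semi-cosymplectic class in theorem \ref{thm:2} is larger than the one in theorem \ref{thm:3}) is just a careful spelling-out of that same computation.
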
\noindent
Moreover, we investigate intersections between certain types using this technique.
\begin{prop}
If a normal almost contact metric $5$-manifold $\lb(M^5,g,\xi,\eta,\vphi\rb)$ is K-contact or nearly Sasaki, then $\lb(M^5,g,\xi,\eta,\vphi\rb)$ is Sasaki. Moreover, there exists no non-integrable almost contact metric $5$-manifold that is
\bite
\item[a)] nearly cosymplectic and quasi-cosymplectic.
\item[b)] almost $\alpha$-Kenmotsu and semi-cosymplectic.
\item[c)] almost $\alpha$-Sasaki and semi-cosymplectic.
\item[d)] nearly Sasaki and semi-cosymplectic.
\item[e)] K-contact and semi-cosymplectic.
\eite
\end{prop}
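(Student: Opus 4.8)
The plan is to translate every hypothesis into a statement about the intrinsic torsion type via Theorems \ref{thm:2} and \ref{thm:3}, and then intersect the corresponding $\Un(2)$-submodules of $\W$. Since $\W=\W_1\op\ldots\op\W_{10}$ is a direct sum of irreducible modules, a class $\W_{i_1}\op\ldots\op\W_{i_k}$ literally designates the subspace $\bigop_j\W_{i_j}$, and being of several classes simultaneously means $\Gamma$ lies in the intersection, which is again a sum of some of the $\W_i$. Concretely: for the first assertion, a normal almost contact metric $5$-manifold is of class $\W_1\op\W_3\op\W_5\op\W_6\op\W_8$ by Theorem \ref{thm:2}. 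If it is moreover K-contact, then by Theorem \ref{thm:2} it is of class $\W_3\op\W_9$; intersecting with $\W_1\op\W_3\op\W_5\op\W_6\op\W_8$ forces $\Gamma\in\W_3$. One then checks, from the defining relation $d\eta=2\Phi$ of a Sasaki manifold together with formula (\ref{eqn:1}), that $\Gamma\in\W_3$ plus normality gives exactly the Sasaki condition $(\nabla^g_X\vphi)(Y)=g(X,Y)\xi-\eta(Y)X$; equivalently, since Theorem \ref{thm:2} records that $\alpha$-Sasaki lies in $\W_3\op\W_9\op\W_{10}$ and normality kills $\W_9\op\W_{10}$, and $\W_3$ is one-dimensional with the right scaling behaviour, $\Gamma\in\W_3$ with $\Gamma\ne 0$ is $\alpha$-Sasaki for a specific $\alpha$ which the Sasaki normalization $d\eta=2\Phi$ pins to $\alpha=1$. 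The case of a normal manifold that is nearly Sasaki proceeds the same way: nearly Sasaki is of class $\W_3\op\W_4\op\W_7$ by Theorem \ref{thm:2}, and intersecting with the normal class $\W_1\op\W_3\op\W_5\op\W_6\op\W_8$ again leaves $\Gamma\in\W_3$, hence Sasaki by the same identification (taking care that the nearly Sasaki normalization also matches $\alpha=1$).

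For parts (a)--(e) the strategy is identical and purely module-theoretic: list the two class-memberships from Theorem \ref{thm:2}, intersect, and observe that the intersection is $\{0\}$, which by definition says the manifold is integrable. For (a): nearly cosymplectic is $\W_4\op\W_7$ and quasi-cosymplectic is $\W_4\op\W_7\op\W_9\op\W_{10}$ --- here the intersection is all of $\W_4\op\W_7$, which is \emph{not} zero, so a slightly finer argument is needed; one uses instead that the relevant components satisfy an extra linear relation coming from the two defining tensorial equations, reducing $\W_4\op\W_7$ to $\{0\}$, or one re-reads the quasi-cosymplectic defining identity restricted to $\W_4\op\W_7$ and finds it forces both components to vanish. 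For (b): almost $\alpha$-Kenmotsu is $\W_1\op\W_9\op\W_{10}$ and semi-cosymplectic is $\W_2\op\W_4\op\W_5\op\W_6\op\W_7\op\W_8\op\W_9\op\W_{10}$; the intersection is $\W_9\op\W_{10}$. But almost $\alpha$-Kenmotsu additionally has a nonzero $\W_1$-component (the $d\Phi=2\alpha\,\Phi\wedge\eta$ term produces $\Gamma_1\ne 0$ with $\alpha\ne 0$), and the theorem's own remark states almost $\alpha$-Kenmotsu manifolds are non-integrable precisely through this $\W_1$-part, so being additionally semi-cosymplectic would kill $\W_1$ and leave only $\W_9\op\W_{10}$, contradicting $\Gamma_1\ne 0$; hence no such non-integrable manifold exists. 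Part (c) is the mirror image with $\W_3$ in place of $\W_1$: almost $\alpha$-Sasaki is $\W_3\op\W_9\op\W_{10}$ with nonzero $\W_3$-component, and semi-cosymplectic removes $\W_3$. Part (d): nearly Sasaki is $\W_3\op\W_4\op\W_7$, semi-cosymplectic is the eight-term class above; the intersection is $\W_4\op\W_7$, so one must again argue that a nearly Sasaki manifold has nonzero $\W_3$-component (from the $g(X,X)\xi-\eta(X)X$ term) and semi-cosymplectic deletes it, forcing $\Gamma\in\W_4\op\W_7$; but then the nearly Sasaki defining identity, whose $\W_4\op\W_7$-part must separately vanish, gives $\Gamma=0$. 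Part (e): K-contact is $\W_3\op\W_9$, semi-cosymplectic deletes $\W_3$ and leaves $\W_9$; a K-contact structure has $\Gamma_3\ne 0$ (indeed $d\eta=2\Phi\ne 0$ forces it), contradiction.

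The common engine is formula (\ref{eqn:1}), $\nabla^g_X\Phi=\sigma_1(\Gamma(X),\Phi)$, together with Lemma \ref{lem:2}, which let one read off from any first-order PDE on $\Phi,\eta,\vphi$ exactly which $\W_i$-components of $\Gamma$ it constrains; the bookkeeping is already packaged in Theorems \ref{thm:1}--\ref{thm:3}, so most of the proposition is immediate. The step I expect to be the genuine obstacle is \emph{not} the easy intersections but the cases where the naive module intersection is nonzero --- namely (a) and (d), where one lands in $\W_4\op\W_7$ and must show the \emph{extra} tensorial equation (the second half of quasi-cosymplectic in (a), the nearly-Sasaki identity's own $\W_4\op\W_7$-shadow in (d)) cuts this down to $\{0\}$. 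Here one cannot avoid writing out the $\W_4$- and $\W_7$-projections of the defining identities and checking that the two conditions are linearly independent on this $10$-dimensional (four plus six) space; this is the only place a short explicit computation in the adapted frame $(e_1,\ldots,e_5)$ is unavoidable. The two normality statements (K-contact$\Rightarrow$Sasaki, nearly Sasaki$\Rightarrow$Sasaki) require, beyond the module intersection giving $\Gamma\in\W_3$, verifying that the scaling constant is exactly $1$ rather than a general nonzero $\alpha$; this follows from matching the contact normalization $d\eta=2\Phi$ against the general $\alpha$-Sasaki relation $d\eta=2\alpha\Phi$, so it is a one-line check once the type is known.
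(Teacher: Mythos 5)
Your proposal follows essentially the same route the paper intends: the proposition is stated right after the remark that these intersections are investigated ``using this technique,'' i.e.\ the method of theorems \ref{thm:2} and \ref{thm:3} (lemma \ref{lem:2}, theorem \ref{thm:1} and formula (\ref{eqn:1})), which is exactly what you do --- translate each condition into a constraint on $\Gamma$ and intersect. You also correctly flag the two genuine subtleties: that the $\alpha$-type and contact-type conditions pin a \emph{nonzero} component of $\Gamma$ in $\W_1$ or $\W_3$ (so semi-cosymplecticity, which kills $\Gamma_1$ and $\Gamma_3$, gives an immediate contradiction in (b)--(e)), and that in case (a), where the naive class intersection $\W_4\op\W_7$ is nonzero because $\W_4\cong\W_7$, one must actually compute the two defining identities' solution subspaces inside $\W_4\op\W_7$ and check they meet only in $\lb\{0\rb\}$.
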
\noindent
Finally, we visualize some of the previous statements in figure \ref{pic:1}.
\begin{figure}[t]
\centering
%
%  Created by WinFIG version 4.71 
%  METADATA <version>1.0</version> 
%
\setlength{\unitlength}{3947sp}%
\begingroup\makeatletter\ifx\SetFigFont\undefined%
\gdef\SetFigFont#1#2#3#4#5{%
  \reset@font\fontsize{#1}{#2pt}%
  \fontfamily{#3}\fontseries{#4}\fontshape{#5}%
  \selectfont}%
\fi\endgroup%
\begin{picture}(5637,4517)(1097,-5292)
%  METADATA <id>1</id> 
\thicklines
{\color[rgb]{0,0,0}\put(1331,-3492){\oval(360,360)[bl]}
\put(1331,-988){\oval(360,360)[tl]}
\put(3892,-3492){\oval(360,360)[br]}
\put(3892,-988){\oval(360,360)[tr]}
\put(1331,-3672){\line( 1, 0){2561}}
\put(1331,-808){\line( 1, 0){2561}}
\put(1151,-3492){\line( 0, 1){2504}}
\put(4072,-3492){\line( 0, 1){2504}}
}%
%  METADATA <id>3</id> 
{\color[rgb]{0,0,0}\put(2874,-5079){\oval(360,360)[bl]}
\put(2874,-3130){\oval(360,360)[tl]}
\put(5370,-5079){\oval(360,360)[br]}
\put(5370,-3130){\oval(360,360)[tr]}
\put(2874,-5259){\line( 1, 0){2496}}
\put(2874,-2950){\line( 1, 0){2496}}
\put(2694,-5079){\line( 0, 1){1949}}
\put(5550,-5079){\line( 0, 1){1949}}
}%
%  METADATA <id>4</id> 
\thinlines
{\color[rgb]{0,0,0}\put(2874,-4831){\oval(360,360)[bl]}
\put(2874,-4021){\oval(360,360)[tl]}
\put(5370,-4831){\oval(360,360)[br]}
\put(5370,-4021){\oval(360,360)[tr]}
\put(2874,-5011){\line( 1, 0){2496}}
\put(2874,-3841){\line( 1, 0){2496}}
\put(2694,-4831){\line( 0, 1){810}}
\put(5550,-4831){\line( 0, 1){810}}
}%
%  METADATA <id>5</id> 
{\color[rgb]{0,0,0}\put(1299,-4403){\oval(360,360)[bl]}
\put(1299,-4366){\oval(360,360)[tl]}
\put(3579,-4403){\oval(360,360)[br]}
\put(3579,-4366){\oval(360,360)[tr]}
\put(1299,-4583){\line( 1, 0){2280}}
\put(1299,-4186){\line( 1, 0){2280}}
\put(1119,-4403){\line( 0, 1){ 37}}
\put(3759,-4403){\line( 0, 1){ 37}}
}%
%  METADATA <id>9</id> 
{\color[rgb]{0,0,0}\put(4306,-4404){\oval(360,360)[bl]}
\put(4306,-4344){\oval(360,360)[tl]}
\put(6460,-4404){\oval(360,360)[br]}
\put(6460,-4344){\oval(360,360)[tr]}
\put(4306,-4584){\line( 1, 0){2154}}
\put(4306,-4164){\line( 1, 0){2154}}
\put(4126,-4404){\line( 0, 1){ 60}}
\put(6640,-4404){\line( 0, 1){ 60}}
}%
%  METADATA <id>10</id> 
{\color[rgb]{0,0,0}\put(4433,-1530){\oval(360,360)[bl]}
\put(4433,-1439){\oval(360,360)[tl]}
\put(6498,-1530){\oval(360,360)[br]}
\put(6498,-1439){\oval(360,360)[tr]}
\put(4433,-1710){\line( 1, 0){2065}}
\put(4433,-1259){\line( 1, 0){2065}}
\put(4253,-1530){\line( 0, 1){ 91}}
\put(6678,-1530){\line( 0, 1){ 91}}
}%
%  METADATA <id>11</id> 
{\color[rgb]{0,0,0}\put(4410,-2195){\oval(360,360)[bl]}
\put(4410,-2115){\oval(360,360)[tl]}
\put(6532,-2195){\oval(360,360)[br]}
\put(6532,-2115){\oval(360,360)[tr]}
\put(4410,-2375){\line( 1, 0){2122}}
\put(4410,-1935){\line( 1, 0){2122}}
\put(4230,-2195){\line( 0, 1){ 80}}
\put(6712,-2195){\line( 0, 1){ 80}}
}%
%  METADATA <id>12</id> 
{\color[rgb]{0,0,0}\put(1591,-2605){\oval(360,360)[bl]}
\put(1591,-2565){\oval(360,360)[tl]}
\put(3594,-2605){\oval(360,360)[br]}
\put(3594,-2565){\oval(360,360)[tr]}
\put(1591,-2785){\line( 1, 0){2003}}
\put(1591,-2385){\line( 1, 0){2003}}
\put(1411,-2605){\line( 0, 1){ 40}}
\put(3774,-2605){\line( 0, 1){ 40}}
}%
%  METADATA <id>13</id> 
{\color[rgb]{0,0,0}\put(1591,-2049){\oval(360,360)[bl]}
\put(1591,-1427){\oval(360,360)[tl]}
\put(3577,-2049){\oval(360,360)[br]}
\put(3577,-1427){\oval(360,360)[tr]}
\put(1591,-2229){\line( 1, 0){1986}}
\put(1591,-1247){\line( 1, 0){1986}}
\put(1411,-2049){\line( 0, 1){622}}
\put(3757,-2049){\line( 0, 1){622}}
}%
%  METADATA <id>14</id> 
{\color[rgb]{0,0,0}\put(1681,-1925){\oval(360,360)[bl]}
\put(1681,-1833){\oval(360,360)[tl]}
\put(3419,-1925){\oval(360,360)[br]}
\put(3419,-1833){\oval(360,360)[tr]}
\put(1681,-2105){\line( 1, 0){1738}}
\put(1681,-1653){\line( 1, 0){1738}}
\put(1501,-1925){\line( 0, 1){ 92}}
\put(3599,-1925){\line( 0, 1){ 92}}
}%
%  METADATA <id>15</id> 
{\color[rgb]{0,0,0}\put(4036,-4629){\oval(360,360)[bl]}
\put(4036,-3491){\oval(360,360)[tl]}
\put(5370,-4629){\oval(360,360)[br]}
\put(5370,-3491){\oval(360,360)[tr]}
\put(4036,-4809){\line( 1, 0){1334}}
\put(4036,-3311){\line( 1, 0){1334}}
\put(3856,-4629){\line( 0, 1){1138}}
\put(5550,-4629){\line( 0, 1){1138}}
}%
%  METADATA <id>2</id> 
\put(1753,-1006){\makebox(0,0)[lb]{\smash{{\SetFigFont{12}{14.4}{\familydefault}{\mddefault}{\updefault}{\color[rgb]{0,0,0}semi-cosymplectic}%
}}}}
%  METADATA <id>16</id> 
\put(1734,-1434){\makebox(0,0)[lb]{\smash{{\SetFigFont{12}{14.4}{\familydefault}{\mddefault}{\updefault}{\color[rgb]{0,0,0}quasi-cosymplectic}%
}}}}
%  METADATA <id>17</id> 
\put(1648,-1845){\makebox(0,0)[lb]{\smash{{\SetFigFont{12}{14.4}{\familydefault}{\mddefault}{\updefault}{\color[rgb]{0,0,0}almost cosymplectic}%
}}}}
%  METADATA <id>18</id> 
\put(1678,-2580){\makebox(0,0)[lb]{\smash{{\SetFigFont{12}{14.4}{\rmdefault}{\mddefault}{\updefault}{\color[rgb]{0,0,0}nearly cosymplectic}%
}}}}
%  METADATA <id>19</id> 
\put(4548,-1453){\makebox(0,0)[lb]{\smash{{\SetFigFont{12}{14.4}{\rmdefault}{\mddefault}{\updefault}{\color[rgb]{0,0,0}K-contact non-Sasaki}%
}}}}
%  METADATA <id>20</id> 
\put(4380,-2124){\makebox(0,0)[lb]{\smash{{\SetFigFont{12}{14.4}{\rmdefault}{\mddefault}{\updefault}{\color[rgb]{0,0,0}nearly Sasaki non-Sasaki}%
}}}}
%  METADATA <id>21</id> 
\put(3855,-3149){\makebox(0,0)[lb]{\smash{{\SetFigFont{12}{14.4}{\rmdefault}{\mddefault}{\updefault}{\color[rgb]{0,0,0}normal}%
}}}}
%  METADATA <id>22</id> 
\put(4175,-3493){\makebox(0,0)[lb]{\smash{{\SetFigFont{12}{14.4}{\familydefault}{\mddefault}{\updefault}{\color[rgb]{0,0,0}quasi-Sasaki}%
}}}}
%  METADATA <id>23</id> 
\put(3520,-4024){\makebox(0,0)[lb]{\smash{{\SetFigFont{12}{14.4}{\familydefault}{\mddefault}{\updefault}{\color[rgb]{0,0,0}trans-Sasaki}%
}}}}
%  METADATA <id>24</id> 
\put(4781,-4350){\makebox(0,0)[lb]{\smash{{\SetFigFont{12}{14.4}{\familydefault}{\mddefault}{\updefault}{\color[rgb]{0,0,0}almost $\alpha$-Sasaki}%
}}}}
%  METADATA <id>25</id> 
\put(1670,-4365){\makebox(0,0)[lb]{\smash{{\SetFigFont{12}{14.4}{\rmdefault}{\mddefault}{\updefault}{\color[rgb]{0,0,0}almost $\alpha$-Kenmotsu}%
}}}}
\end{picture}%
\caption{Euler diagram of certain types of non-in\-te\-gra\-ble almost contact metric $5$-manifolds}
\label{pic:1}
\end{figure}
%
%
%
%------------------------------------------------------------------------------------------
%
\section{Compatible connections}\label{sec:4}\noindent
Let $\nabla$ be a metric connection on $\lb(M^5,g,\xi,\eta,\vphi\rb)$, i.e.\
\be
g\lb(\nabla_XY,Z\rb)=g\lb(\nabla^{g}_XY,Z\rb)+A\lb(X,Y,Z\rb)
\ee
for $A\in\A$. Its torsion $T$, viewed as a $\lb(3,0\rb)$-tensor, is given by
\ba
T\lb(X,Y,Z\rb)&=g\lb(\nabla_XY-\nabla_YX-\lb[X,Y\rb],Z\rb)\\
&=A\lb(X,Y,Z\rb)-A\lb(Y,X,Z\rb).
\ea
Consequently, we have $T\in\T$. We say that $T$ is
\bite
\item \emph{vectorial} if $T\in\T_1$, or equivalently if $A\in\A_1$.
\item \emph{totally skew-symmetric} if $T\in\T_2$, or equivalently if $A\in\A_2$.
\item \emph{cyclic} if $T\in\T_1\op\T_3$, or equivalently if $A\in\A_1\op\A_3$.
\item \emph{traceless cyclic} if $T\in\T_3$, or equivalently if $A\in\A_3$.
\eite
The connection forms
\be
\omega_{ij}:=g\lb(\nabla e_i,e_j\rb)
\ee
of $\nabla$ define a $1$-form
\be
\Omega:=\lb(\omega_{ij}\rb)_{1\leq i,j\leq 5}
\ee
with values in the Lie algebra $\so(5)$,
\be
\Omega\lb(X\rb)=\Omega^g\lb(X\rb)+A\lb(X\rb).
\ee
We project onto $\m$:
\ba
\pr_{\m}\lb(\Omega\lb(X\rb)\rb)&=\Gamma\lb(X\rb)+\pr_{\m}\lb(A\lb(X\rb)\rb)\\
&=\Gamma\lb(X\rb)+\pr_{\W}\lb(A\rb)\lb(X\rb).
\ea
Therefore, $\nabla$ preserves the underlying almost contact metric structure, i.e.\
\ba
\nabla\xi&=0, &
\nabla\eta&=0, &
\nabla\vphi&=0
\ea
are satisfied, if and only if
\be\label{eqn:2}\tag{$\star\star$}
\Gamma+\pr_{\W}\lb(A\rb)=0.
\ee
In this case, we also say that the connection is \emph{compatible} with the almost contact metric structure. With a glance at lemma \ref{lem:1} and proposition \ref{prop:1}, we immediately have
\begin{prop}\label{prop:3}
Let $\lb(M^5,g,\xi,\eta,\vphi\rb)$ be a $5$-dimensional almost contact metric manifold equipped with a metric connection $\nabla^c$ compatible with the almost contact metric structure. If the torsion of $\nabla^c$ is
\bite
\item[a)] vectorial, then $\lb(M^5,g,\xi,\eta,\vphi\rb)$ is of class
\be
\W_1\op\W_2.
\ee
\item[b)] totally skew-symmetric, then $\lb(M^5,g,\xi,\eta,\vphi\rb)$ is of class
\be
\W_3\op\W_4\op\W_5\op\W_6.
\ee
\item[c)] traceless cyclic, then $\lb(M^5,g,\xi,\eta,\vphi\rb)$ is of class
\be
\W_2\op\W_3\op\W_5\op\W_6\op\W_7\op\W_8\op\W_9\op\W_{10}.
\ee
\eite
\end{prop}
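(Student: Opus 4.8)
The plan is to read off each case directly from the compatibility equation (\ref{eqn:2}), namely $\Gamma = -\pr_{\W}(A)$, together with the description of $\pr_{\W}$ on the $15$ irreducible $\Un(2)$-submodules of $\A$ supplied by lemma \ref{lem:1}. First I would recall the grouping of these submodules according to which Cartan type they sit in: $\A_1 = \A_{1,1}\op\A_{1,2}$ corresponds to vectorial torsion, $\A_2 = \A_{2,1}\op\ldots\op\A_{2,4}$ to totally skew-symmetric torsion, and $\A_3 = \A_{3,1}\op\ldots\op\A_{3,9}$ to traceless cyclic torsion. Since $\nabla^c$ is compatible, $\Gamma$ must lie in $\pr_{\W}(\A_i)$ for the relevant index $i$, so in each case it suffices to compute $\pr_{\W}$ applied to the corresponding sum of submodules and then identify the resulting subspace of $\W$ in terms of the modules $\W_1,\ldots,\W_{10}$ via the definitions stated just after lemma \ref{lem:1}.

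For part (a), $A\in\A_1 = \A_{1,1}\op\A_{1,2}$, so lemma \ref{lem:1} gives $\pr_{\W}(A)\in\pr_{\W}(\A_{1,1})\op\pr_{\W}(\A_{1,2}) = \A_{1,1}\op\pr_{\W}(\A_{1,2})$. By the definitions $\W_1 = \A_{1,1}$ and $\W_2 = \pr_{\W}(\A_{1,2})$, this is exactly $\W_1\op\W_2$, whence $\Gamma\in\W_1\op\W_2$. For part (b), $A\in\A_2 = \A_{2,1}\op\ldots\op\A_{2,4}$, so $\pr_{\W}(A)$ lies in the span of $\pr_{\W}(\A_{2,1}),\ldots,\pr_{\W}(\A_{2,4})$; using the identifications $\W_3 = \pr_{\W}(\A_{2,1})$, $\W_4 = \A_{2,2} = \pr_{\W}(\A_{2,2})$, $\W_5 = \pr_{\W}(\A_{2,3})$, $\W_6 = \pr_{\W}(\A_{2,4})$ this is $\W_3\op\W_4\op\W_5\op\W_6$, so $\Gamma$ lies in that class. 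For part (c), $A\in\A_3 = \A_{3,1}\op\ldots\op\A_{3,9}$; lemma \ref{lem:1} tells us that $\pr_{\W}(\A_{3,9}) = \{0\}$ while the images of the remaining eight summands are $\pr_{\W}(\A_{3,1})$, $\A_{3,2} = \W_7$, $\pr_{\W}(\A_{3,3})$, $\pr_{\W}(\A_{3,4})$, $\pr_{\W}(\A_{3,5})$, $\A_{3,6} = \W_8$, $\A_{3,7} = \W_9$, $\A_{3,8} = \W_{10}$. Matching against the definitions $\W_2 = \pr_{\W}(\A_{1,2}) = \pr_{\W}(\A_{3,5})$, $\W_3 = \pr_{\W}(\A_{2,1}) = \pr_{\W}(\A_{3,1})$, $\W_5 = \pr_{\W}(\A_{2,3}) = \pr_{\W}(\A_{3,3})$, $\W_6 = \pr_{\W}(\A_{2,4}) = \pr_{\W}(\A_{3,4})$, we see that $\pr_{\W}(\A_3) = \W_2\op\W_3\op\W_5\op\W_6\op\W_7\op\W_8\op\W_9\op\W_{10}$, so $\Gamma$ lies in this class.

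The only subtlety — and the one point that deserves a sentence of care rather than a routine citation — is that the several coincidences among the images (e.g.\ $\pr_{\W}(\A_{3,5}) = \pr_{\W}(\A_{1,2})$, $\pr_{\W}(\A_{3,3}) = \pr_{\W}(\A_{2,3})$, $\pr_{\W}(\A_{3,1}) = \pr_{\W}(\A_{2,1})$) mean the image subspaces in $\W$ genuinely overlap, so one must be sure no module of $\W$ is omitted and that the sums are taken as sums of the distinct irreducible summands $\W_i$ actually hit; this is exactly the bookkeeping that lemma \ref{lem:1} and proposition \ref{prop:1} were set up to handle, and since $\pr_{\W}$ is $\Un(2)$-equivariant its image is automatically a sum of some of the $\W_i$, so no further argument about irreducibility is needed. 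I expect no real obstacle here: the proposition is an immediate corollary of (\ref{eqn:2}) and lemma \ref{lem:1}, and the proof is essentially a tabulation.

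\begin{proof}

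In each case the connection $\nabla^c$ is compatible, so equation (\ref{eqn:2}) yields $\Gamma = -\pr_{\W}(A)$ with $A$ constrained to lie in $\A_1$, $\A_2$ or $\A_3$ according to whether the torsion is vectorial, totally skew-symmetric or traceless cyclic (see \autoref{sec:2.2}). Since $\pr_{\W}$ is $\Un(2)$-equivariant, $\Gamma$ lies in $\pr_{\W}(\A_1)$, $\pr_{\W}(\A_2)$ or $\pr_{\W}(\A_3)$ respectively, and it remains to identify each of these subspaces of $\W$ using lemma \ref{lem:1}.

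a) Here $A\in\A_1 = \A_{1,1}\op\A_{1,2}$, hence by lemma \ref{lem:1}

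\be

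\Gamma\in\pr_{\W}\lb(\A_{1,1}\rb)\op\pr_{\W}\lb(\A_{1,2}\rb)=\A_{1,1}\op\pr_{\W}\lb(\A_{1,2}\rb)=\W_1\op\W_2.

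\ee

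b) Here $A\in\A_2 = \A_{2,1}\op\ldots\op\A_{2,4}$, hence by lemma \ref{lem:1}

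\be

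\Gamma\in\pr_{\W}\lb(\A_{2,1}\rb)\op\A_{2,2}\op\pr_{\W}\lb(\A_{2,3}\rb)\op\pr_{\W}\lb(\A_{2,4}\rb)=\W_3\op\W_4\op\W_5\op\W_6.

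\ee

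c) Here $A\in\A_3 = \A_{3,1}\op\ldots\op\A_{3,9}$. By lemma \ref{lem:1} we have $\pr_{\W}(\A_{3,9}) = \lb\{0\rb\}$, while

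\ba

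\pr_{\W}\lb(\A_{3,1}\rb)&=\pr_{\W}\lb(\A_{2,1}\rb)=\W_3, &
\pr_{\W}\lb(\A_{3,2}\rb)&=\A_{3,2}=\W_7, \\
\pr_{\W}\lb(\A_{3,3}\rb)&=\pr_{\W}\lb(\A_{2,3}\rb)=\W_5, &
\pr_{\W}\lb(\A_{3,4}\rb)&=\pr_{\W}\lb(\A_{2,4}\rb)=\W_6, \\
\pr_{\W}\lb(\A_{3,5}\rb)&=\pr_{\W}\lb(\A_{1,2}\rb)=\W_2, &
\pr_{\W}\lb(\A_{3,6}\rb)&=\A_{3,6}=\W_8, \\
\pr_{\W}\lb(\A_{3,7}\rb)&=\A_{3,7}=\W_9, &
\pr_{\W}\lb(\A_{3,8}\rb)&=\A_{3,8}=\W_{10}.

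\ea

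Summing the distinct summands that occur, we obtain

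\be

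\Gamma\in\W_2\op\W_3\op\W_5\op\W_6\op\W_7\op\W_8\op\W_9\op\W_{10}.

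\ee

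\end{proof}
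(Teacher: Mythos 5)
Your proof is correct and follows essentially the same route as the paper, which derives the proposition ``immediately'' from the compatibility equation $(\star\star)$, lemma \ref{lem:1} and proposition \ref{prop:1}; you have merely written out the tabulation of $\pr_{\W}$ on the relevant summands $\A_{i,j}$ that the paper leaves implicit. No gaps.
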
\noindent
Using our approach of \autoref{sec:3}, we characterize each of these classes in terms of differential equations.
\begin{prop}\label{prop:4}
An almost contact metric $5$-manifold is of class
\bite
\item[a)] $\W_1\op\W_2$ if and only if
\ba
N\lb(X,Y,Z\rb)&=\eta\lb(X\rb)d\eta\lb(Y,Z\rb), &
d\Phi&=-2\lb(\frac{1}{4}\,\delta\eta\cdot\eta+\xi\hook d\eta\rb)\wedge\Phi.
\ea
\item[b)] $\W_3\op\W_4\op\W_5\op\W_6$ if and only if
\ba
N\lb(X,Y,Z\rb)+N\lb(Z,Y,X\rb)&=0, &
d\Phi\lb(X,Y,\xi\rb)+d\Phi\lb(\vphi\lb(X\rb),\vphi\lb(Y\rb),\xi\rb)&=0.
\ea
\item[c)] $\W_2\op\W_3\op\W_5\op\W_6\op\W_7\op\W_8\op\W_9\op\W_{10}$ if and only if
\ba
\mfr{S}_{X,Y,Z}N\lb(X,Y,Z\rb)&=0, &
d\Phi\wedge\Phi&=0.
\ea
\eite
\end{prop}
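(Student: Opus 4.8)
The plan is to translate each of the three class conditions into pointwise tensor identities using the dictionary already established in the excerpt, namely formula \eqref{eqn:1}, which identifies $\nabla^g\Phi$ with $\sigma_1(\Gamma(X),\Phi)$, together with the explicit characterizations of the modules $\A_{3,i}$ and the projection formulae of \autoref{lem:1}. For each part I would first determine the \emph{algebraic} projection conditions that single out the relevant submodule of $\W$ (e.g.\ $\Gamma\in\W_1\op\W_2$ means $\Gamma$ is orthogonal to $\W_3\op\cdots\op\W_{10}$), then express those conditions in terms of $\nabla^g\Phi$, and finally recognize the resulting expressions as $N$ and $d\Phi$ (using \autoref{lem:2} and the formula for the Nijenhuis tensor in terms of $\nabla^g\Phi$ derived in the proof of \autoref{thm:1}, plus $d\Phi(X,Y,Z)=\mfr{S}_{X,Y,Z}(\nabla^g_X\Phi)(Y,Z)$).

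For part (a): the modules $\W_1=\A_{1,1}$ and $\W_2=\pr_{\W}(\A_{1,2})$ are the two pieces of $\pr_{\W}(\A_1)$, i.e.\ the image of the vectorial part. So I would characterize $\Gamma\in\W_1\op\W_2$ as the vanishing of all the ``$\vphi$-compatibility defects'' coming from $\A_3$ and of the $\A_{2,i}$ components. By \autoref{thm:1}, $\Gamma$ has no $\W_4,\W_7,\W_9,\W_{10}$ component iff $N$ takes a constrained form; the precise form $N(X,Y,Z)=\eta(X)\,d\eta(Y,Z)$ should drop out once one imposes that the ``$\mca{C}$-type'' part of $\nabla^g\Phi$ reduces to its $\W_1\op\W_2$ content. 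The $d\Phi$ equation is then the statement that $d\Phi$ lies in the appropriate one-dimensional-modulo-$\Lambda^1_2$ piece; concretely, for $\A\in\A_1$ given by a $1$-form $\alpha$ one computes $d\Phi$ directly via $\sigma_1$, and matching the $\eta$-component of $\alpha$ with $\delta\eta$ (using $\delta\eta=-\sum_i(\nabla^g_{e_i}\eta)(e_i)$ and \autoref{lem:2}) and the $\Lambda^1_2$-component with $\xi\hook d\eta$ yields the displayed identity.

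For part (b): $\W_3\op\W_4\op\W_5\op\W_6=\pr_{\W}(\A_2)$ is exactly the totally-skew part, so $\Gamma$ lies in this sum iff $\pr_{\W}(\A_1)$-component and the ``genuinely $\A_3$'' components $\W_7,\W_8,\W_9,\W_{10}$ all vanish. The vanishing of the $\A_1$-part is the symmetry $N(X,Y,Z)+N(Z,Y,X)=0$ (this is the ``cyclic minus vectorial'' type condition — compare the definition of $\A_1$ versus $\A_3$ and the trace conditions). The vanishing of $\W_7\op\W_8\op\W_9\op\W_{10}=\A_{3,2}\op\A_{3,6}\op\A_{3,7}\op\A_{3,8}$ is, via the $\vphi$-eigenspace descriptions of $\A_{3,i}$, equivalent to $\nabla^g\Phi$ satisfying a single $\vphi$-Hermitian-symmetry relation restricted to $\xi$; unwinding $(\nabla^g_X\Phi)(Y,\xi)$ through \autoref{lem:2} produces the stated identity $d\Phi(X,Y,\xi)+d\Phi(\vphi X,\vphi Y,\xi)=0$.

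For part (c): $\W_2\op\W_3\op\W_5\op\W_6\op\W_7\op\W_8\op\W_9\op\W_{10}$ is the complement of $\W_1\op\W_4$, i.e.\ $\Gamma$ has no $\A_{1,1}$ and no $\A_{2,2}$ component. By \autoref{thm:1}, the absence of $\W_4$ (and the constrained-$N$ structure) is captured by $\mfr{S}_{X,Y,Z}N(X,Y,Z)=0$: the cyclic sum of $N$ kills precisely the pieces that $\A_{2,2}\cong\W_4$ contributes to. The absence of $\W_1$ is a one-dimensional condition; since $\W_1=\A_{1,1}$ corresponds under \eqref{eqn:1} to the ``$\Phi\wedge\Phi$-type'' part of $d\Phi$ (the trace of $d\Phi$ against $\Phi$), it should be exactly $d\Phi\wedge\Phi=0$ — because $d\Phi\wedge\Phi\in\Lambda^5$ is one-dimensional and detects the $\Lambda^2_1=\R\cdot\Phi$ component of $d\Phi$, which is what $\W_1$ feeds. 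I would verify this by computing $\sigma_1(\Gamma(X),\Phi)\wedge\Phi$ and checking it is nonzero exactly on $\A_{1,1}$.

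\medskip

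\textbf{Main obstacle.} The genuinely laborious step is the explicit bookkeeping of which $\A_{i,j}$ summand contributes what to $N$ and to $d\Phi$ — i.e.\ computing the images of each of the ten $\W_i$ under the maps $\Gamma\mapsto N$ and $\Gamma\mapsto d\Phi$. Schur's lemma and the multiplicity/isomorphism data in \autoref{prop:1} (and its precursor) constrain these maps up to scalars, so in principle one only needs to evaluate on one well-chosen element of each module; but pinning down the scalars (and hence the exact numerical coefficients such as the $\tfrac14$ in part (a)) requires honest computation in the local model of \autoref{sec:2}. I expect the proof to proceed, as in \autoref{thm:1} and \autoref{thm:2}, by reducing everything to \eqref{eqn:1} and then doing this finite linear-algebra check module by module.
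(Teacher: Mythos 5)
Your strategy coincides with the paper's: the proposition is stated there without an explicit proof, as a consequence of the same machinery used for Theorems \ref{thm:1}--\ref{thm:3}, namely formula (\ref{eqn:1}), Lemma \ref{lem:2}, the expression of $N$ through $\nabla^g\Phi$ obtained in the proof of Theorem \ref{thm:1}, and a Schur-lemma-guided evaluation of each condition on one test element per irreducible module in the local model. Your outline correctly identifies all of these ingredients and correctly locates the real work in pinning down the scalars.

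One concrete intermediate claim in your sketch of part (b) is, however, false as stated. You assign the exclusion of $\W_7\op\W_8\op\W_9\op\W_{10}$ to the condition $d\Phi\lb(X,Y,\xi\rb)+d\Phi\lb(\vphi\lb(X\rb),\vphi\lb(Y\rb),\xi\rb)=0$. That condition only constrains the $\vphi$-invariant part of $\xi\hook d\Phi$, i.e.\ it factors through an equivariant map $\W\ra\Lambda^2_1\op\Lambda^2_3\cong\R\op\lb[\lambda^{1,1}_0\rb]$. By Schur's lemma this map vanishes identically on $\W_7$, $\W_9$ and $\W_{10}$, whose isomorphism types (of dimensions $2$, $4$ and $6$) do not occur in the target; so the $d\Phi$-condition is automatically satisfied on those three modules and can only exclude $\W_1$ and $\W_8$. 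The modules $\W_2$, $\W_7$, $\W_9$, $\W_{10}$ must instead be excluded by the failure of total skew-symmetry of $N$ --- essentially the opposite of your attribution, which charged the $N$-condition only with killing the $\A_1$-part. The module-by-module evaluation you defer to would detect and repair this, so the method still goes through, but the step as you wrote it would not.
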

We now solve (\ref{eqn:2}).
\begin{thm}\label{thm:6}
Let $\lb(M^5,g,\xi,\eta,\vphi\rb)$ be a $5$-dimensional almost contact metric manifold of class $\W_1\op\W_2$. Then there exists a unique metric connection $\nabla^c$ with vectorial torsion compatible with the almost contact metric structure. $\nabla^c$ is given by
\be
g\lb(\nabla^c_XY,Z\rb)=g\lb(\nabla^{g}_XY,Z\rb)-\theta_1\lb(\frac{1}{4}\,\delta\eta\cdot\eta+\xi\hook d\eta\rb)\lb(X,Y,Z\rb).
\ee
Moreover, $\lb(M^5,g,\xi,\eta,\vphi\rb)$ is of class
\bite
\item[a)] $\W_1$ if and only if $d\eta=0$.
\item[b)] $\W_2$ if and only if $\delta\eta=0$.
\eite
\end{thm}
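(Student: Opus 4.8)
The plan is to solve the compatibility equation \eqref{eqn:2} directly under the hypothesis $\Gamma\in\W_1\op\W_2$, then check uniqueness and interpret the two summands via $d\eta$ and $\delta\eta$. First I would note that by Proposition \ref{prop:3}, a compatible connection with vectorial torsion forces $\Gamma\in\W_1\op\W_2 = \A_{1,1}\op\pr_{\W}(\A_{1,2})$, and conversely, starting from $\Gamma\in\W_1\op\W_2$, we must find $A\in\A_1$ with $\pr_{\W}(A)=-\Gamma$. Since $\A_1=\A_{1,1}\op\A_{1,2}$ and, by Lemma \ref{lem:1}, $\pr_{\W}$ is injective on $\A_{1,1}$ (it is the identity) and injective on $\A_{1,2}$ (as $\pr_{\W}(\A_{1,2})=\pr_{\W}(\A_{3,5})\neq\{0\}$ and $\A_{1,2}$ is irreducible), the map $\pr_{\W}\colon\A_1\to\W_1\op\W_2$ is a $\Un(2)$-equivariant isomorphism. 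Hence the solution $A$ exists and is unique, which gives both existence and uniqueness of $\nabla^c$ at one stroke.

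Next I would identify $A$ explicitly. Because $A\in\A_1$, there is a $1$-form $\alpha$ with $A=\theta_1(\alpha)$, i.e.\ $g(\nabla^c_XY,Z)=g(\nabla^g_XY,Z)+\alpha(Z)g(X,Y)-\alpha(Y)g(X,Z)$. The claim is $\alpha=-\bigl(\tfrac14\,\delta\eta\cdot\eta+\xi\hook d\eta\bigr)$. To pin this down, I would use Lemma \ref{lem:2} and formula \eqref{eqn:1}: the condition $\nabla^c\xi=0$ reads $\nabla^g_X\xi+A(X,\xi,\,\cdot\,)^\sharp=0$, i.e.\ $(\nabla^g_X\eta)(Y)=-\alpha(Y)\eta(X)+\alpha(\xi)g(X,Y)$, which by Lemma \ref{lem:2} equals $(\nabla^g_X\Phi)(\xi,\vphi(Y))$. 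Antisymmetrizing in $X,Y$ recovers $d\eta$, and tracing recovers $\delta\eta$; matching coefficients determines $\alpha$ in terms of the $\Lambda^1_1$-component (the $\eta$-direction, controlled by $\delta\eta$) and the $\Lambda^1_2$-component (controlled by $\xi\hook d\eta$). The normalization constant $\tfrac14$ comes out of the trace computation in dimension $5$ — this is where one must be careful with conventions, since $\theta_1(\eta)$ contributes to $\delta\eta$ with a dimension-dependent factor. This bookkeeping with the factor $\tfrac14$ is the main obstacle; everything else is forced by equivariance.

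Finally, for the last two items I would use the decomposition $\alpha = -\tfrac14\,\delta\eta\cdot\eta - \xi\hook d\eta$ together with the identification $\W_1=\A_{1,1}=\theta_1(\Lambda^1_1)$ and $\W_2=\pr_{\W}(\theta_1(\Lambda^1_2))$. The $\W_1$-part of $\Gamma$ corresponds, up to sign, to the $\Lambda^1_1$-component $-\tfrac14\,\delta\eta\cdot\eta$ of $\alpha$, and the $\W_2$-part to the $\Lambda^1_2$-component $-\xi\hook d\eta$. Hence $\Gamma\in\W_1$ iff $\xi\hook d\eta=0$, and since on the $\W_1\op\W_2$ class one already has $d\Phi=-2\bigl(\tfrac14\,\delta\eta\cdot\eta+\xi\hook d\eta\bigr)\wedge\Phi$ by Proposition \ref{prop:4}(a) and, with $N(X,Y,Z)=\eta(X)d\eta(Y,Z)$ the remaining components of $d\eta$ are pure $\Lambda^2_4$ and are pinned by $\xi\hook d\eta$ and $\delta\eta$ — a short argument shows $\xi\hook d\eta=0$ is in fact equivalent to $d\eta=0$ on this class. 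Dually, $\Gamma\in\W_2$ iff the $\eta$-component vanishes, i.e.\ $\delta\eta=0$. I would close by verifying consistency: $\Gamma=0$ (integrable) corresponds exactly to $d\eta=0$ and $\delta\eta=0$, matching $\W_1\cap\W_2=\{0\}$.
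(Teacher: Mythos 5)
Your proposal is correct and takes essentially the same route as the paper: the paper's proof consists precisely of the injectivity of $\pr_{\W}$ restricted to $\A_1$ (your isomorphism onto $\W_1\op\W_2$), the identity $\Gamma=\lb(\pr_{\W}\circ\theta_1\rb)\lb(\frac{1}{4}\,\delta\eta\cdot\eta+\xi\hook d\eta\rb)$, and the relation $d\eta\wedge\eta=0$ on this class, which is exactly the fact you invoke (in slightly muddier language) to upgrade $\xi\hook d\eta=0$ to $d\eta=0$ in part a). Your derivation of $\alpha$ from $\nabla^c\xi=0$ via lemma \ref{lem:2}, including the trace giving $\alpha\lb(\xi\rb)=-\frac{1}{4}\,\delta\eta$, is just the paper's ``direct computation'' spelled out.
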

\begin{proof}
A direct computation verifies that
\be
\Gamma=\lb(\pr_{\W}\circ\theta_1\rb)\lb(\frac{1}{4}\,\delta\eta\cdot\eta+\xi\hook d\eta\rb)
\ee
and
\be
d\eta\wedge\eta=0
\ee
hold if $\Gamma\in\W_1\op\W_2$. Moreover, the restriction of the projection map $\pr_\W$ to $\A_1$ is one-to-one.
\end{proof}\noindent
\begin{thm}\label{thm:4}
Let $\lb(M^5,g,\xi,\eta,\vphi\rb)$ be a $5$-dimensional almost contact metric manifold of class $\W_3\op\W_4\op\W_5\op\W_6$. Then there exists a unique metric connection $\nabla^c$ with totally skew-symmetric torsion compatible with the almost contact metric structure. $\nabla^c$ is given by
\be
g\lb(\nabla^c_XY,Z\rb)=g\lb(\nabla^{g}_XY,Z\rb)
+\frac{1}{2}\lb(d\eta\wedge\eta+\xi\hook\lb(\ast d\Phi\wedge\Phi\rb)\rb)\lb(X,Y,Z\rb).
\ee
Moreover, $\lb(M^5,g,\xi,\eta,\vphi\rb)$ is of class
\bite
\item[a)] $\W_3\op\W_4\op\W_5$ if and only if $\xi\hook\lb(\ast d\Phi\wedge\Phi\rb)=0$.
\item[b)] $\W_3\op\W_4\op\W_6$ if and only if $d\eta\wedge\eta=\ast d\eta$.
\item[c)] $\W_3\op\W_5\op\W_6$ if and only if $N=0$.
\item[d)] $\W_4\op\W_5\op\W_6$ if and only if $\xi\hook\delta\Phi=0$.
\eite
\end{thm}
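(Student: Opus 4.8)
The plan is to read the perturbation tensor off equation~(\ref{eqn:2}). Recall from proposition~\ref{prop:3} that a compatible metric connection with totally skew-symmetric torsion already forces the manifold into the class $\W_3\op\W_4\op\W_5\op\W_6$, so only the converse, together with uniqueness and the explicit shape of the connection, has to be shown. By the definitions of $\W_3,\dots,\W_6$ preceding proposition~\ref{prop:1} and by lemma~\ref{lem:1}, $\pr_\W(\A_2)=\W_3\op\W_4\op\W_5\op\W_6$, and $\pr_\W$ is injective on each irreducible summand $\A_{2,i}$ --- otherwise the module $\W_{i+2}$ would be trivial, contradicting proposition~\ref{prop:1}. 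Since the $\W_i$ sit in direct sum and $\dim\A_2=\dim\Lambda^3=10=1+2+3+4$, the restriction of $\pr_\W$ to $\A_2$ is a $\Un(2)$-equivariant isomorphism onto $\W_3\op\W_4\op\W_5\op\W_6$. Hence for a manifold of this class there is exactly one $A\in\A_2$ with $\Gamma+\pr_\W(A)=0$, and by the definition of totally skew-symmetric torsion this $A$ produces the unique compatible metric connection of that type.

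It remains to identify this $A$ with $\frac{1}{2}(d\eta\wedge\eta+\xi\hook(\ast d\Phi\wedge\Phi))$. The latter is a $3$-form, hence lies in $\A_2$, so it suffices to check that its $\pr_\W$-image equals $-\Gamma$. By formula~(\ref{eqn:1}) we have $\nabla^g\Phi=\sigma_1(\Gamma,\Phi)$, and by lemma~\ref{lem:2}, $\nabla^g\eta$ is an algebraic function of $\nabla^g\Phi$; substituting this into $d\eta=\sum_ie_i\wedge\nabla^g_{e_i}\eta$ and $d\Phi=\sum_ie_i\wedge\nabla^g_{e_i}\Phi$ exhibits $d\eta$, $d\Phi$ --- and therefore $\frac{1}{2}(d\eta\wedge\eta+\xi\hook(\ast d\Phi\wedge\Phi))$ --- as $\Un(2)$-equivariant linear functions of $\Gamma$. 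Restricted to $\W_3\op\W_4\op\W_5\op\W_6$, the assignment sending $\Gamma$ to $\pr_\W$ of this tensor is then a $\Un(2)$-equivariant endomorphism of $\W_3\op\W_4\op\W_5\op\W_6$; as $\W_3,\dots,\W_6$ are pairwise non-isomorphic (their dimensions are $1,2,3,4$), Schur's lemma shows it is block-diagonal and acts on each $\W_i$ by a scalar, so the claimed identity reduces to its verification on one non-zero element of each $\W_i$ --- say $\pr_\W(\theta_2(\Phi\wedge\eta))$ for $\W_3$ and suitable elements of $\A_{2,2},\A_{2,3},\A_{2,4}$ for $\W_4,\W_5,\W_6$ --- where in each case one computes $\nabla^g\Phi=\sigma_1(\Gamma,\Phi)$, then $d\eta$ and $d\Phi$, and finally applies $\pr_\W$. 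This fourfold computation is the only genuinely computational ingredient, and the main obstacle, of the proof.

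The refinements a)--d) record when the components $\Gamma_6,\Gamma_5,\Gamma_4,\Gamma_3$ of $\Gamma$ vanish; since $\pr_\W$ respects the splitting $\A_2=\A_{2,1}\op\dots\op\A_{2,4}$, each is governed by one piece of the torsion $3$-form $2A=d\eta\wedge\eta+\xi\hook(\ast d\Phi\wedge\Phi)$. Because $d\eta\wedge\eta$ contains $\eta$ it has no $\Lambda^3_4$-part, whereas $\xi\hook(\ast d\Phi\wedge\Phi)=(\xi\hook\ast d\Phi)\wedge\Phi$ lies in $\Lambda^3_4$ (note $\xi\hook\ast d\Phi\in\Lambda^1_2$); since $\W_6=\pr_\W(\theta_2(\Lambda^3_4))$, this yields a): $\Gamma_6=0$ if and only if $\xi\hook(\ast d\Phi\wedge\Phi)=0$. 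Dually, $d\eta\wedge\eta$ carries the $\eta$-part of $2A$; a short computation shows $\Gamma(\xi)\in\Lambda^2_2$ for the class in question, so that by lemma~\ref{lem:2} the form $d\eta$ has no $\Lambda^2_4$-part, and hence its $\W_4$- and $\W_5$-contributions sit in its $\Lambda^2_1\op\Lambda^2_2$- respectively $\Lambda^2_3$-part; using the relation $\Lambda^2_1\op\Lambda^2_2=\lb\{\alpha\in\Lambda^2\setsep\ast\alpha=\eta\wedge\alpha\rb\}$ we obtain b). Finally, c) is simply the intersection of $\W_3\op\W_4\op\W_5\op\W_6$ with the normal class $\W_1\op\W_3\op\W_5\op\W_6\op\W_8$ of theorem~\ref{thm:1}; and d) follows because $\W_3\cong\R$ is the trivial module, hence detected by a single $\Un(2)$-invariant scalar, which a direct computation identifies --- up to a non-zero factor --- with $\xi\hook\delta\Phi$.
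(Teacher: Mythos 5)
Your proposal is correct and follows essentially the same route as the paper: solve $\Gamma+\pr_\W\lb(A\rb)=0$ using that $\pr_\W$ restricted to $\A_2$ is injective with image $\W_3\op\W_4\op\W_5\op\W_6$, verify $-2\,\Gamma=\lb(\pr_\W\circ\theta_2\rb)\lb(d\eta\wedge\eta+\xi\hook\lb(\ast d\Phi\wedge\Phi\rb)\rb)$ by a (Schur-reducible) computation, and read off a), b), d) from the decomposition of this $3$-form into its $\Lambda^3_1,\ldots,\Lambda^3_4$ components, with c) coming from theorems \ref{thm:1}--\ref{thm:3}. The only cosmetic difference is that the paper packages a), b), d) into one explicit display expressing $2\lb(d\eta\wedge\eta+\xi\hook\lb(\ast d\Phi\wedge\Phi\rb)\rb)$ as a sum of its four $\Lambda^3_i$-parts, whereas you derive each case by a separate (correct) argument.
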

\begin{proof}
Suppose that $\Gamma\in\W_3\op\W_4\op\W_5\op\W_6$. Then,
\be
-2\,\Gamma=\lb(\pr_{\W}\circ\theta_2\rb)\lb(d\eta\wedge\eta+\xi\hook\lb(\ast d\Phi\wedge\Phi\rb)\rb)
\ee
and
\ba
2\lb(d\eta\wedge\eta+\xi\hook\lb(\ast d\Phi\wedge\Phi\rb)\rb)=&\lb(\xi\hook\delta\Phi\rb)\cdot\underbrace{\Phi\wedge\eta}_{\in\Lambda^3_1}+\underbrace{\lb(d\eta\wedge\eta+\ast d\eta-\lb(\xi\hook\delta\Phi\rb)\cdot\Phi\wedge\eta\rb)}_{\in\Lambda^3_2}\\
&+\underbrace{\lb(d\eta\wedge\eta-\ast d\eta\rb)}_{\in\Lambda^3_3}+2\,\underbrace{\xi\hook\lb(\ast d\Phi\wedge\Phi\rb)}_{\in\Lambda^3_4}.
\ea
are satisfied. Moreover, the restriction of $\pr_\W$ to $\A_2$ is one-to-one. Theorems \ref{thm:2} and \ref{thm:3} complete the proof.
\end{proof}
\begin{rmk}
The first part of theorem \ref{thm:4}, together with proposition \ref{prop:3},b), yields the same result as theorem 8.2 of \cite{FI02}. Indeed, the Nijenhuis tensor is totally skew-symmetric and $\xi$ is a Killing vector field with respect to $g$ if and only if the almost contact metric $5$-manifold $\lb(M^5,g,\xi,\eta,\vphi\rb)$ is of class $\W_3\op\W_4\op\W_5\op\W_6$. Moreover, 
\ba
\lb(\ast d\Phi\wedge\Phi\rb)\lb(\xi,X,Y,Z\rb)=&-d\Phi\lb(\vphi\lb(X\rb),\vphi\lb(Y\rb),\vphi\lb(Z\rb)\rb)+N\lb(Z,X,Y\rb)\\&-\mfr{S}_{X,Y,Z}\eta\lb(X\rb)N\lb(\xi,Y,Z\rb)
\ea
holds in this case.
\end{rmk}\noindent
A lengthy but similar computation for the remaining case results in
\begin{thm}\label{thm:5}
Let $\lb(M^5,g,\xi,\eta,\vphi\rb)$ be a $5$-dimensional almost contact metric manifold of class $\W_2\op\W_3\op\W_5\op\W_6\op\W_7\op\W_8\op\W_9\op\W_{10}$. Then there exists a unique metric connection $\nabla^c$ with traceless cyclic torsion compatible with the almost contact metric structure. $\nabla^c$ is given by
\ba
g\lb(\nabla^c_XY,Z\rb)=g\lb(\nabla^{g}_XY,Z\rb)&
-\frac{1}{2}\,\theta_3\lb(d\eta+\lb(\xi\hook d\eta\rb)\wedge\eta\rb)\lb(X,Y,Z\rb)\\
&+\frac{1}{4}\,\theta_4\lb(\ast\lb(\delta\Phi\wedge\Phi\wedge\eta\rb)-3\lb(\xi\hook d\eta\rb)\rb)\lb(X,Y,Z\rb)\\
&+\frac{1}{3}\,\theta_5\lb(\xi\hook d\eta\rb)\lb(X,Y,Z\rb)+\frac{1}{2}\,\theta_3\lb(\ast d\Phi\rb)\lb(\vphi\lb(X\rb),Y,Z\rb)\\
&-\frac{1}{4}\,N\lb(\vphi^2\lb(X\rb),\vphi\lb(Y\rb),\vphi\lb(Z\rb)\rb)+\frac{1}{2}\,\eta\lb(Y\rb)N\lb(\vphi\lb(X\rb),\xi,\vphi\lb(Z\rb)\rb)\\
&+\frac{1}{2}\,\eta\lb(Z\rb)N\lb(\vphi\lb(X\rb),\vphi\lb(Y\rb),\xi\rb).
\ea
Moreover, $\lb(M^5,g,\xi,\eta,\vphi\rb)$ is of class
\bite
\item[a)] $\W_2\op\W_3\op\W_5\op\W_6\op\W_7\op\W_8\op\W_9$ if and only if
\be
N\lb(\vphi\lb(X\rb),\vphi\lb(Y\rb),\xi\rb)+N\lb(\vphi\lb(Y\rb),\vphi\lb(X\rb),\xi\rb)=0.
\ee
\item[b)] $\W_2\op\W_3\op\W_5\op\W_6\op\W_7\op\W_8\op\W_{10}$ if and only if
\be
N\lb(\vphi\lb(X\rb),\vphi\lb(Y\rb),\vphi\lb(Z\rb)\rb)=0.
\ee
\item[c)] $\W_2\op\W_3\op\W_5\op\W_6\op\W_7\op\W_9\op\W_{10}$ if and only if 
\be
\lb(\ast d\Phi\rb)\lb(\vphi\lb(X\rb),\vphi^2\lb(Y\rb)\rb)+\lb(\ast d\Phi\rb)\lb(\vphi\lb(Y\rb),\vphi^2\lb(X\rb)\rb)=0.
\ee
\item[d)] $\W_2\op\W_3\op\W_5\op\W_6\op\W_8\op\W_9\op\W_{10}$ if and only if 
\be
N\lb(\xi,\vphi\lb(X\rb),\vphi\lb(Y\rb)\rb)=0.
\ee
\item[e)] $\W_2\op\W_3\op\W_5\op\W_7\op\W_8\op\W_9\op\W_{10}$ if and only if $\ast\lb(\delta\Phi\wedge\Phi\wedge\eta\rb)-3\lb(\xi\hook d\eta\rb)=0$.
\item[f)] $\W_2\op\W_3\op\W_6\op\W_7\op\W_8\op\W_9\op\W_{10}$ if and only if $d\eta+\lb(\xi\hook d\eta\rb)\wedge\eta=\ast\lb(d\eta\wedge\eta\rb)$.
\item[g)] $\W_2\op\W_5\op\W_6\op\W_7\op\W_8\op\W_9\op\W_{10}$ if and only if $\xi\hook \delta\Phi=0$.
\item[h)] $\W_3\op\W_5\op\W_6\op\W_7\op\W_8\op\W_9\op\W_{10}$ if and only if $\xi\hook d\eta=0$.
\eite
\end{thm}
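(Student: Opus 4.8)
The entire statement is controlled by the algebraic identity~(\ref{eqn:2}): a metric connection $\nabla^g+A$, $A\in\A$, is compatible with the almost contact metric structure and has traceless cyclic torsion precisely when $A\in\A_3$ and $\pr_\W(A)=-\Gamma$. So the first step is to analyse $\pr_\W$ on $\A_3$. Combining lemma~\ref{lem:1} with proposition~\ref{prop:1}, the restriction of $\pr_\W$ to $\A_3=\A_{3,1}\op\ldots\op\A_{3,9}$ is injective on each $\A_{3,i}$ with $i\neq9$, vanishes on $\A_{3,9}$, and sends $\A_{3,1}\op\ldots\op\A_{3,8}$ isomorphically onto the module $\W_2\op\W_3\op\W_5\op\W_6\op\W_7\op\W_8\op\W_9\op\W_{10}$. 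Since the hypothesis says exactly that $\Gamma$ has no $\W_1=\A_{1,1}$ and no $\W_4=\A_{2,2}$ component, there is a unique $A\in\A_{3,1}\op\ldots\op\A_{3,8}$ with $\pr_\W(A)=-\Gamma$; this $A$ furnishes the connection $\nabla^c$.

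The substantial part is to recognise this $A$ as the right-hand side of the displayed formula. I would start from~(\ref{eqn:1}), $\nabla^g_X\Phi=\sigma_1(\Gamma(X),\Phi)$, and use lemma~\ref{lem:2} together with the expansion of the Nijenhuis tensor derived in the proof of theorem~\ref{thm:1} to express the component of $\Gamma$ in each of the eight relevant modules through the intrinsic data $d\eta$, $\delta\eta$, $d\Phi$, $\delta\Phi$ and $N$; under the hypothesis one has $\delta\eta=0$ (no $\W_1$-component), which is why $\delta\eta$ never appears. One then inverts $\pr_\W$ summand by summand. The part of $A$ in $\theta_3(\Lambda^2_1\op\Lambda^2_2\op\Lambda^2_3)=\A_{3,1}\op\A_{3,2}\op\A_{3,3}$ is obtained by applying $\theta_3$ to the $\Lambda^2_1\op\Lambda^2_2\op\Lambda^2_3$-projection $d\eta+(\xi\hook d\eta)\wedge\eta$ of $d\eta$; the $\vphi$-type summands $\A_{3,6},\A_{3,7},\A_{3,8}$, i.e.\ $\W_8,\W_9,\W_{10}$, together with the remaining share of $\W_7$, come from the $\vphi$-twisted term $\theta_3(\ast d\Phi)(\vphi(X),Y,Z)$ and from the Nijenhuis blocks $N(\vphi^2(X),\vphi(Y),\vphi(Z))$, $\eta(Y)N(\vphi(X),\xi,\vphi(Z))$ and $\eta(Z)N(\vphi(X),\vphi(Y),\xi)$; and the parts in $\A_{3,4}$, $\A_{3,5}$ come from $\theta_4$ and $\theta_5$ applied to the $\Lambda^1_2$-forms $\ast(\delta\Phi\wedge\Phi\wedge\eta)-3(\xi\hook d\eta)$ and $\xi\hook d\eta$. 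One then verifies $\mfr{S}_{X,Y,Z}A(X,Y,Z)=0$ and $\sum_iA(e_i,e_i,X)=0$, so $A\in\A_3$, and that $\pr_\W(A)=-\Gamma$; the coefficients $-\frac12,\frac14,\frac13,\frac12$ on the four $\theta$-blocks and $-\frac14,\frac12,\frac12$ on the Nijenhuis blocks are exactly what is forced by these two requirements.

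For the equivalences (a)--(h): each listed class is the full module with one summand omitted, so in each case the assertion is that the stated differential equation is equivalent to the vanishing of the corresponding component of $\Gamma$, equivalently of the corresponding block in the formula for $A$. Reading this off the expressions above: the $\W_2$-block is $\theta_5(\xi\hook d\eta)$, hence (h); the $\W_3$-block is the $\Lambda^2_1$-part of $d\eta+(\xi\hook d\eta)\wedge\eta$, a multiple of $\Phi$ whose coefficient equals, up to a nonzero constant, $\xi\hook\delta\Phi$, hence (g); the $\W_5$-block is the $\Lambda^2_3$-part of $d\eta+(\xi\hook d\eta)\wedge\eta$, which vanishes precisely when this $2$-form lies in $\Lambda^2_1\op\Lambda^2_2$, i.e.\ when $d\eta+(\xi\hook d\eta)\wedge\eta=\ast(d\eta\wedge\eta)$, hence (f); the $\W_6$-block is $\theta_4(\ast(\delta\Phi\wedge\Phi\wedge\eta)-3(\xi\hook d\eta))$, hence (e); and the $\W_{10}$-, $\W_9$-, $\W_8$- and $\W_7$-blocks vanish exactly under the respective conditions on $N$ and $\ast d\Phi$ of (a)--(d), which one extracts from the isotypic decomposition of $N$ already performed for theorem~\ref{thm:1} (theorems~\ref{thm:2} and~\ref{thm:3} shortening one or two of these).

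The only genuine obstacle is the volume of the representation-theoretic bookkeeping: decomposing $N$, $d\Phi$, $\ast d\Phi$, $d\eta$ and $\delta\Phi$ into irreducible $\Un(2)$-pieces, matching each piece to the correct $\W_i$, and pinning down every coefficient so that $\pr_\W$ of the candidate tensor reproduces $-\Gamma$ on the nose --- this is the ``lengthy but similar computation'' referred to in the statement. No structural ingredient beyond the maps $\theta_1,\ldots,\theta_5$, lemma~\ref{lem:1} and corollary~\ref{cor:1} is required.
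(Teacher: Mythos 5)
Your strategy --- solving $\Gamma+\pr_{\W}\lb(A\rb)=0$ for $A\in\A_3$ by inverting $\pr_{\W}$ block by block via $\theta_3,\theta_4,\theta_5$ and the isotypic pieces of $N$ and $\ast d\Phi$, then reading off (a)--(h) as the vanishing of the individual blocks --- is precisely the ``lengthy but similar computation'' the paper intends, and your assignment of blocks to the conditions (a)--(h) is the correct bookkeeping. The existence statement and the eight equivalences go through along these lines.

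There is, however, a genuine gap in the uniqueness step, and it is one your own setup exposes. You note that $\pr_{\W}$ kills $\A_{3,9}$ and then conclude that there is a unique $A\in\A_{3,1}\op\ldots\op\A_{3,8}$ with $\pr_{\W}\lb(A\rb)=-\Gamma$. That is true, but the theorem asserts uniqueness among \emph{all} metric connections with traceless cyclic torsion, i.e.\ among all $A\in\A_3=\A_{3,1}\op\ldots\op\A_{3,9}$ satisfying (\ref{eqn:2}); by your own observation the solution set of that problem is a coset of $\A_3\cap\Ker\lb(\pr_{\W}\rb)=\A_{3,9}$, which is $8$-dimensional, not a point. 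The freedom is real: the tensor $B=e_1\ox\omega_2+e_2\ox\omega_3$ has vanishing cyclic sum and vanishing trace (since $e_1\wedge\omega_2+e_2\wedge\omega_3=0$ and $e_1\hook\omega_2+e_2\hook\omega_3=0$), takes values in $\un\lb(2\rb)=\Lambda^2_1\op\Lambda^2_3$, and hence lies in $\A_{3,9}$ with $\tau\lb(B\rb)\neq0$; adding it to any solution produces a second compatible metric connection with traceless cyclic torsion. This is exactly where the present case differs from theorems \ref{thm:6} and \ref{thm:4}, whose proofs rest on $\pr_{\W}$ being one-to-one on $\A_1$, respectively $\A_2$. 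Your argument therefore only yields uniqueness after the additional normalization that the torsion have no $\T_{3,9}$-component --- which is what the displayed formula silently enforces. You must either build that normalization into the statement you prove, or explicitly flag that the literal uniqueness assertion requires it; as written, the step from ``unique in $\A_{3,1}\op\ldots\op\A_{3,8}$'' to ``unique in $\A_3$'' is not justified and cannot be.
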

%
%
%
%------------------------------------------------------------------------------------------
%
\section{Examples}\label{sec:5}\noindent
Throughout this section, the curvature tensor of a metric connection $\nabla$, viewed as $(4,0)$-tensor, is defined by
\be
\mrm{R}\left(X,Y,Z,V\right)=g\left(\nabla_X\nabla_YZ-\nabla_Y\nabla_XZ-\nabla_{\left[X,Y\right]}Z,V\right).
\ee
%
%--------------------------------------------------------------------------------------------
%
\subsection{Examples of class \texorpdfstring{$\W_1$ and $\W_2$}{W_1 and W_2}}
Let $M^5$ be the Lie group
\be
\lb\{
\begin{pmatrix}
e^{-x_5}&0&0&0&x_1\\
0&e^{-x_5}&0&0&x_2\\
0&0&e^{-x_5}&0&x_3\\
0&0&0&e^{-x_5}&x_4\\
0&0&0&0&1
\end{pmatrix}\in\GL\lb(5,\R\rb)\setsep x_1,\ldots,x_5\in\R\rb\}
\ee
equipped with the left-invariant Riemannian metric
\be
g=e^{2x_5}\lb(dx_1^2+dx_2^2+dx_3^2+dx_4^2\rb)+dx_5^2.
\ee
$\lb(M^5,g\rb)$ can be considered as the warped product $\R\x_f\R^4$ with warping function $f:\R\ni t\mapsto e^t\in\R$ (cf.\ \cite{BO69}). The vector fields
\ba
&e^{-x_5}\frac{\del}{\del x_1}, &
&e^{-x_5}\frac{\del}{\del x_2}, &
&e^{-x_5}\frac{\del}{\del x_3}, &
&e^{-x_5}\frac{\del}{\del x_4}, &
&\frac{\del}{\del x_5} &
\ea
are left-invariant. In the following discussion, we consider two almost contact metric structures on $\lb(M^5,g\rb)$.
\subsubsection{Class \texorpdfstring{$\W_1$}{W_1}}
The standard Kenmotsu structure $\lb(\xi,\eta,\vphi\rb)$ on $\lb(M^5,g\rb)$ (cf. \cite{Ken72}) is characterized by
\ba
\xi&=\frac{\del}{\del x_5}, &
\eta&=dx_5, &
\Phi&=e^{2x_5}\lb(dx_1\wedge dx_2+dx_3\wedge dx_4\rb).
\ea
Consequently, $\lb(e_1,e_2,e_3,e_4,e_5\rb)$ defined via
\ba
e_1&:=e^{-x_5}\frac{\del}{\del x_1},&
e_2&:=e^{-x_5}\frac{\del}{\del x_2},&
e_3&:=e^{-x_5}\frac{\del}{\del x_3},&
e_4&:=e^{-x_5}\frac{\del}{\del x_4},&
e_5&:=\frac{\del}{\del x_5}
\ea
is an adapted frame of $\lb(M^5,g,\xi,\eta,\vphi\rb)$. The non-zero connection forms of $\nabla^g$ with respect to this frame are
\ba
\omega_{15}^g&=-e_1, &
\omega_{25}^g&=-e_2, &
\omega_{35}^g&=-e_3, &
\omega_{45}^g&=-e_4.
\ea
Therefore,
\be
\Gamma = -e_1\ox e_{15}-e_2\ox e_{25}-e_3\ox e_{35}-e_4\ox e_{45}
\ee
is the intrinsic torsion of $\lb(M^5,g,\xi,\eta,\vphi\rb)$.
\begin{prop}\label{prop:9}
The almost contact metric manifold $\lb(M^5,g,\xi,\eta,\vphi\rb)$ has the following properties:
\bite
\item[a)]The almost contact metric manifold is of class $\W_1$.
\item[b)]The Nijenhuis tensor $N$ vanishes.
\item[c)]The fundamental form $\Phi$ and the $1$-form $\eta$ satisfy
\ba
d\Phi&=2\,\Phi\wedge\eta,&
\delta\Phi&=0,&
d\eta&=0.
\ea
\item[d)]The Riemannian curvature tensor $\mrm{R}^g$ is the identity map of $\Lambda^2$.
\eite
\end{prop}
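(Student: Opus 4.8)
For (a) I would recognise the intrinsic torsion $\Gamma=-e_1\ox e_{15}-e_2\ox e_{25}-e_3\ox e_{35}-e_4\ox e_{45}$ inside the image of $\theta_1$. Unwinding the definition one has $\theta_1(\alpha)=\sum_i e_i\ox(e_i\wedge\alpha)$ for every $\alpha\in\Lambda^1$, so $\theta_1(-\eta)=-\sum_{i=1}^4 e_i\ox(e_i\wedge e_5)=\Gamma$, the $i=5$ summand being zero. Since $-\eta\in\Lambda^1_1$, this places $\Gamma$ in $\A_{1,1}=\W_1$, hence $\lb(M^5,g,\xi,\eta,\vphi\rb)$ is of class $\W_1$. (Alternatively one may argue structurally: the standard Kenmotsu structure is normal and almost $1$-Kenmotsu, so of class $\W_1\op\W_3\op\W_5\op\W_6\op\W_8$ and of class $\W_1\op\W_9\op\W_{10}$ respectively, by theorem \ref{thm:2}; the intersection is $\W_1$.)

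Claim (b) then follows immediately: by (a) the manifold is of class $\W_1\subseteq\W_1\op\W_3\op\W_5\op\W_6\op\W_8$, and theorem \ref{thm:1} identifies the latter class with that of a vanishing Nijenhuis tensor. For (c), the two exterior derivatives drop out of $\Phi=e^{2x_5}\lb(dx_1\wedge dx_2+dx_3\wedge dx_4\rb)$ and $\eta=dx_5$, namely $d\eta=0$ and $d\Phi=2\,dx_5\wedge\Phi=2\,\Phi\wedge\eta$. For $\delta\Phi$ I would substitute $\Gamma=\theta_1(-\eta)$ into formula $(\ref{eqn:1})$, obtaining $\lb(\nabla^g_X\Phi\rb)\lb(Y,Z\rb)=\eta\lb(Y\rb)\Phi\lb(Z,X\rb)-\eta\lb(Z\rb)\Phi\lb(Y,X\rb)$; contracting the first argument over an orthonormal frame then gives $\delta\Phi=-\Phi(\cdot,\xi)=0$, since $\vphi\lb(\xi\rb)=0$. (Equivalently, $\ast\Phi=e_{345}+e_{125}$ in the adapted coframe, which is closed as one checks in coordinates, so $\delta\Phi$ is a scalar multiple of $\ast d\ast\Phi=0$.)

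The substantial part is (d). From the connection forms one reads off $\nabla^g_X e_i=-g\lb(X,e_i\rb)\,\xi$ for $i=1,\ldots,4$ and $\nabla^g_X\xi=X-\eta\lb(X\rb)\,\xi$, while the only non-vanishing brackets of the adapted frame are $\lb[e_i,e_5\rb]=e_i$, $i=1,\ldots,4$. Computing $\mrm{R}^g\lb(e_a,e_b\rb)e_c=\nabla^g_{e_a}\nabla^g_{e_b}e_c-\nabla^g_{e_b}\nabla^g_{e_a}e_c-\nabla^g_{\lb[e_a,e_b\rb]}e_c$ on frame vectors yields $\mrm{R}^g\lb(e_i,e_j\rb)e_j=-e_i$ and $\mrm{R}^g\lb(e_i,e_5\rb)e_5=-e_i$ for distinct $i,j\in\lb\{1,\ldots,4\rb\}$, all remaining independent components vanishing; hence $\lb(M^5,g\rb)$ has constant sectional curvature $-1$ --- indeed it is the horospherical model of real hyperbolic space $\mrm{H}^5$ --- so that $\mrm{R}^g\lb(X,Y,Z,V\rb)=g\lb(X,Z\rb)g\lb(Y,V\rb)-g\lb(Y,Z\rb)g\lb(X,V\rb)$. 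Viewing this $(4,0)$-tensor as the symmetric endomorphism of $\Lambda^2$ it induces via $\lan\cdot,\cdot\ran$, it fixes every $e_a\wedge e_b$, i.e.\ $\mrm{R}^g=\Id_{\Lambda^2}$. I expect this last step to be the main obstacle: one must track the curvature sign and the normalisation of $\lan\cdot,\cdot\ran$ on $\Lambda^2$ carefully so that constant sectional curvature $-1$ comes out as $+\Id$ rather than $-\Id$.
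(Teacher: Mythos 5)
Your argument is correct and follows the same route the paper implicitly takes (the paper states the proposition without proof, as a direct consequence of the adapted frame, the listed connection forms and the displayed $\Gamma$): the identity $\theta_1(\alpha)=\sum_i e_i\ox(e_i\wedge\alpha)$ does give $\Gamma=\theta_1(-\eta)\in\A_{1,1}=\W_1$, and your sign bookkeeping in (d) is consistent with the paper's conventions, since with $\mrm{R}(X,Y,Z,V)=g(\nabla_X\nabla_YZ-\ldots,V)$ constant sectional curvature $-1$ yields $\mrm{R}^g(e_a,e_b,e_a,e_b)=+1$, i.e.\ $\mrm{R}^g=\sum_{a<b}e_{ab}\ox e_{ab}=\Id_{\Lambda^2}$ (compare the coefficient $-1$ of $e_{i5}\ox e_{i5}$ in the Sasakian Heisenberg example, where the $\xi$-sectional curvature is $+1$).
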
\noindent
As a consequence of proposition \ref{prop:9},a) and theorem \ref{thm:6}, there exists a unique metric connection $\nabla^c$ with vectorial torsion compatible with the almost contact metric structure. The torsion tensor of $\nabla^c$ is
\be
T^c=-e_{15}\ox e_1-e_{25}\ox e_2-e_{35}\ox e_3-e_{45}\ox e_4\in\T_{1,1}.
\ee
\begin{prop}
The metric connection $\nabla^c$ is flat. Moreover, its torsion tensor $T^c$ is parallel with respect to $\nabla^c$, i.e.\
\be
\nabla^cT^c=0.
\ee
\end{prop}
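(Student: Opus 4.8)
The plan is to establish both claims by explicit computation in the adapted frame $\lb(e_1,\ldots,e_5\rb)$, since everything in sight — the metric $g$, the frame, the Levi-Civita connection forms $\omega^g_{ij}$, and the torsion $T^c$ — is left-invariant and given by simple exponential expressions. First I would read off the connection forms $\omega^c_{ij}$ of $\nabla^c$ from the formula in theorem \ref{thm:6}: we have $\Gamma\in\W_1$, so $d\eta=0$, hence the correction term is $-\theta_1\lb(\tfrac14\delta\eta\cdot\eta\rb)$. Using proposition \ref{prop:9},c) one checks $\delta\eta$ is a nonzero constant (in fact $\delta\eta=-4$ in this normalization, matching $T^c$ as displayed), so the correction is $-\theta_1\lb(\eta\rb)$, and unwinding the definition $\theta_1\lb(\alpha\rb)\lb(X,Y,Z\rb)=\alpha\lb(Z\rb)\lan X,Y\ran-\alpha\lb(Y\rb)\lan X,Z\ran$ gives precisely the correction that cancels the four Levi-Civita forms $\omega^g_{i5}=-e_i$ and leaves $\omega^c_{ij}=0$ for all $i,j$. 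So $\nabla^c$ has vanishing connection one-form in this frame, and flatness $\mrm{R}^c=0$ is then immediate from the curvature formula $\mrm{R}\lb(X,Y\rb)=d\Omega^c+\Omega^c\wedge\Omega^c$ applied to $\Omega^c=0$.

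Next I would verify $\nabla^c T^c=0$. Having already computed $\Omega^c\equiv 0$, the covariant derivative of any tensor built from the frame reduces to the ordinary directional derivative of its (constant) components, which vanishes; so $\nabla^c e_i=0$ for every $i$, and $\nabla^c$ annihilates every left-invariant tensor field whose components in the frame are constants. Since $T^c=-e_{15}\ox e_1-e_{25}\ox e_2-e_{35}\ox e_3-e_{45}\ox e_4$ has exactly this form, we conclude $\nabla^c T^c=0$. Alternatively — and this is the slicker route I would actually present — flatness plus $\nabla^c\xi=\nabla^c\eta=\nabla^c\vphi=0$ (compatibility, guaranteed by theorem \ref{thm:6}) already forces $\nabla^c T^c=0$: a compatible connection with vectorial torsion has $A=\theta_1\lb(\beta\rb)$ with $\beta=-\tfrac14\delta\eta\cdot\eta-\xi\hook d\eta$, here $\beta$ proportional to $\eta$, and $\nabla^c T^c$ is built algebraically from $\nabla^c\beta$; since $\nabla^c\eta=0$ and $\delta\eta$ is constant, $\nabla^c\beta=0$, hence $\nabla^c T^c=0$.

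The only mild obstacle is bookkeeping: one must be careful with the sign and normalization of $\delta\eta$ and with the index conventions in $\theta_1$ and in the torsion-versus-$A$ correspondence $T\lb(X,Y,Z\rb)=A\lb(X,Y,Z\rb)-A\lb(Y,X,Z\rb)$, to make sure the displayed $T^c\in\T_{1,1}$ is indeed the image of the correction term under $\tau$. Given proposition \ref{prop:9} this is routine. I would therefore structure the written proof as: (1) compute $\Omega^c=0$ from theorem \ref{thm:6} and proposition \ref{prop:9}; (2) deduce flatness; (3) observe $\nabla^c$ kills all frame-constant tensors, in particular $T^c$, giving $\nabla^c T^c=0$.
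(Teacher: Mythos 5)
Your proposal is correct and takes essentially the approach the paper intends: the proposition is stated without a printed proof, but the parallel $\W_8$ example in section \ref{sec:W8} is handled by exactly this computation of the connection forms of $\nabla^c$, and here one indeed finds $\Omega^c=0$ in the adapted frame, whence flatness and $\nabla^cT^c=0$ (parallelism of every frame-constant tensor) are immediate. The only blemish is a sign slip: with $\delta\eta=-4$ and $d\eta=0$ the correction term in theorem \ref{thm:6} is $-\theta_1(\tfrac14\,\delta\eta\cdot\eta)=+\theta_1(\eta)$, not $-\theta_1(\eta)$, and it is the former that cancels $\omega^g_{i5}=-e_i$ and reproduces the displayed $T^c\in\T_{1,1}$.
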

\subsubsection{Class \texorpdfstring{$\W_2$}{W_2}}
There exists an almost contact metric structure $\lb(\xi,\eta,\vphi\rb)$ on $\lb(M^5,g\rb)$ such that $\lb(e_1,e_2,e_3,e_4,e_5\rb)$ defined by
\ba
e_1&:=-\frac{\del}{\del x_5},&
e_2&:=e^{-x_5}\frac{\del}{\del x_2},&
e_3&:=e^{-x_5}\frac{\del}{\del x_3},&
e_4&:=e^{-x_5}\frac{\del}{\del x_4},&
e_5&:=e^{-x_5}\frac{\del}{\del x_1}
\ea
is an adapted frame of $\lb(M^5,g,\xi,\eta,\vphi\rb)$. With respect to this frame, the non-zero connection forms of the Levi-Civita connection are
\ba
\omega_{12}^g&=-e_2, &
\omega_{13}^g&=-e_3, &
\omega_{14}^g&=-e_4, &
\omega_{15}^g&=-e_5.
\ea
Therefore, the intrinsic torsion of $\lb(M^5,g,\xi,\eta,\vphi\rb)$ is
\be
\Gamma =
-\frac{1}{2}e_3\ox \lb(e_{13}-e_{24}\rb)
-\frac{1}{2}e_4\ox \lb(e_{14}+e_{23}\rb)
-e_5\ox e_{15}.
\ee
\begin{prop}
The almost contact metric manifold $\lb(M^5,g,\xi,\eta,\vphi\rb)$ is of class $\W_2$. Moreover, the fundamental form $\Phi$ and the $1$-form $\eta$ satisfy
\ba
d\Phi&=-2\lb(\xi\hook d\eta\rb)\wedge\Phi,&
\delta\eta&=0.
\ea
\end{prop}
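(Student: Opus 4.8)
The plan is to recognise the intrinsic torsion $\Gamma$ displayed above as an element of $\W_2=\pr_{\W}\lb(\A_{1,2}\rb)$; the two differential identities then follow, either by a direct computation with the structure equations or straight from the general results of \autoref{sec:4}.

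To pin down the class, recall that $\A_{1,2}=\theta_1\lb(\Lambda^1_2\rb)$ and that, written as a $\lb(3,0\rb)$-tensor, $\theta_1\lb(\alpha\rb)=\sum_i e_i\ox\lb(e_i\wedge\alpha\rb)$; hence $\pr_{\W}\lb(\theta_1\lb(\alpha\rb)\rb)=\sum_i e_i\ox\pr_{\m}\lb(e_i\wedge\alpha\rb)$ for every $\alpha\in\Lambda^1$. I would evaluate this for $\alpha=e_1$. The explicit basis of $\un\lb(2\rb)=\Lambda^2_1\op\Lambda^2_3$ fixed in \autoref{sec:2} gives $\pr_{\m}\lb(e_{12}\rb)=0$, $\pr_{\m}\lb(e_{13}\rb)=\frac{1}{2}\lb(e_{13}-e_{24}\rb)$, $\pr_{\m}\lb(e_{14}\rb)=\frac{1}{2}\lb(e_{14}+e_{23}\rb)$ and $\pr_{\m}\lb(e_{15}\rb)=e_{15}$; since $e_i\wedge e_1$ is $0,-e_{12},-e_{13},-e_{14},-e_{15}$ for $i=1,\ldots,5$, only the summands $i=3,4,5$ survive and their sum is precisely the tensor $\Gamma$ displayed above. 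As $e_1$ annihilates $\xi=e_5$, we have $e_1\in\Lambda^1_2$, so $\Gamma=\pr_{\W}\lb(\theta_1\lb(e_1\rb)\rb)\in\pr_{\W}\lb(\A_{1,2}\rb)=\W_2$; that is, $\lb(M^5,g,\xi,\eta,\vphi\rb)$ is of class $\W_2$.

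For the two remaining identities I would argue in either of two ways. Directly: the first structure equations $de_j=\sum_k\omega_{jk}^g\wedge e_k$ and the connection forms listed above give $d\eta=de_5=-e_{15}$, whence $\xi\hook d\eta=e_1$ and $d\Phi=d\lb(e_{12}+e_{34}\rb)=-2\,e_{134}=-2\,e_1\wedge\lb(e_{12}+e_{34}\rb)=-2\lb(\xi\hook d\eta\rb)\wedge\Phi$; moreover $\nabla^g e_5=e_5\ox e_1$ has vanishing trace, so $\delta\eta=0$. Alternatively, since $\W_2\subseteq\W_1\op\W_2$, theorem \ref{thm:6}\,b) yields $\delta\eta=0$ at once, whereupon proposition \ref{prop:4}\,a) yields $d\Phi=-2\lb(\frac{1}{4}\,\delta\eta\cdot\eta+\xi\hook d\eta\rb)\wedge\Phi=-2\lb(\xi\hook d\eta\rb)\wedge\Phi$. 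I expect no genuine obstacle: the whole argument is mechanical, the only point requiring a little care being the bookkeeping for $\pr_{\m}$ on the handful of monomials $e_{ij}$ that occur, which is settled once the splitting $\so\lb(5\rb)=\un\lb(2\rb)\op\m$ of \autoref{sec:2} is in hand.
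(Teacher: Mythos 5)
Your proposal is correct and proceeds exactly as the paper intends for these example propositions, namely by direct computation from the listed connection forms: the identification $\Gamma=\pr_{\W}\lb(\theta_1\lb(e_1\rb)\rb)$ with $e_1\in\Lambda^1_2$ is verified correctly (including the projections $\pr_{\m}\lb(e_{12}\rb)=0$, $\pr_{\m}\lb(e_{13}\rb)=\tfrac12\lb(e_{13}-e_{24}\rb)$, etc.), and both routes you give for $d\Phi=-2\lb(\xi\hook d\eta\rb)\wedge\Phi$ and $\delta\eta=0$ check out. The only remark worth adding is that your second route (invoking theorem \ref{thm:6}\,b) and proposition \ref{prop:4}\,a) once the class $\W_2$ is established) is the more economical one and is consistent with how the paper reuses its general results in \autoref{sec:5}.
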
\noindent
Theorems \ref{thm:6} and \ref{thm:5} now imply that there exist both a unique metric connection $\nabla^{c,1}$ with vectorial torsion and a unique metric connection $\nabla^{c,2}$ with traceless cyclic torsion each compatible with the almost contact metric
structure. Explicitly, the torsion tensors of $\nabla^{c,1}$ and $\nabla^{c,2}$ are
\ba
T^{c,1}&=-e_{12}\ox e_2-e_{13}\ox e_3-e_{14}\ox e_4-e_{15}\ox e_5\in\T_{1,2},\\
T^{c,2}&=\frac{1}{3}\lb(\lb(5\,e_{12}+4\,e_{34}\rb)\ox e_2-\lb(e_{13}-2\,e_{24}\rb)\ox e_3-\lb(e_{14}+2\,e_{23}\rb)\ox e_4\rb)-e_{15}\ox e_5\in\T_{3,5}.
\ea
\begin{prop}
The metric connections $\nabla^{c,1}$ and $\nabla^{c,2}$ have the following properties:
\bite
\item[a)]$\nabla^{c,1}$ is flat.
\item[b)]The torsion tensor of $\nabla^{c,1}$ is parallel with respect to $\nabla^{c,1}$.
\item[c)]The curvature tensor with respect to $\nabla^{c,2}$ is
\ba
\mrm{R}^{c,2}=&\frac{4}{3}e_{12}\ox\lb(2\,e_{12}+e_{34}\rb)+\frac{2}{9}\lb(3\,e_{13}+4\,e_{24}\rb)\ox\lb(e_{13}+e_{24}\rb)\\
&-\frac{8}{9}e_{34}\ox\lb(e_{12}-e_{34}\rb)+\frac{2}{9}\lb(3\,e_{14}-4\,e_{23}\rb)\ox\lb(e_{14}-e_{23}\rb).
\ea
\item[d)]The Ricci tensor with respect to $\nabla^{c,2}$ is
\be
\Ric^{c,2}=\diag\lb(-4,-\frac{40}{9},-\frac{22}{9},-\frac{22}{9},0\rb).
\ee
\item[e)]The holonomy algebra of $\nabla^{c,2}$ is
\be
\hol^{c,2}=\un\lb(2\rb).
\ee
\eite
\end{prop}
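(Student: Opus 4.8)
The entire proposition is a computation in the left-invariant adapted frame $\lb(e_1,\ldots,e_5\rb)$, which reduces it to linear algebra on the Lie algebra $\g$ of $M^5$. From the stated connection forms of $\nabla^g$ one reads off $\nabla^g_{e_1}e_j=0$, $\nabla^g_{e_j}e_1=-e_j$ and $\nabla^g_{e_j}e_j=e_1$ for $j\in\lb\{2,3,4,5\rb\}$, all remaining $\nabla^g_{e_i}e_j$ being zero; hence $\lb[e_1,e_j\rb]=e_j$ for $j=2,3,4,5$ and $\lb[e_i,e_j\rb]=0$ in all other cases. Every connection occurring below is left-invariant, so in this frame its $\so(5)$-valued connection form has constant coefficients, and the associated curvature, Ricci tensor and holonomy become constant-coefficient algebraic expressions in these brackets and in the difference tensors $A^i:=\tau^{-1}\lb(T^{c,i}\rb)$.

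For a) and b): recovering $A^1=\tau^{-1}\lb(T^{c,1}\rb)$ --- equivalently, using the explicit formula of theorem \ref{thm:6} together with $\delta\eta=0$ --- shows that $A^1=\theta_1\lb(-e_1\rb)$, i.e.\ $A^1_XY=g\lb(e_1,Y\rb)X-g\lb(X,Y\rb)e_1$. Substituting the Levi-Civita data above then produces the identity $\nabla^{c,1}_{e_i}e_j=0$ for all $i,j$: the adapted frame is parallel for $\nabla^{c,1}$. A connection admitting a global parallel frame is flat, which is a); and every left-invariant tensor field has constant components in that frame, hence is $\nabla^{c,1}$-parallel --- in particular $\nabla^{c,1}T^{c,1}=0$, which is b). In passing one notes $T^{c,1}\lb(e_i,e_j\rb)=-\lb[e_i,e_j\rb]$, to be used below.

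For c) and d): write $\nabla^{c,2}=\nabla^{c,1}+B$ with the difference tensor $B:=A^2-A^1$. Since the adapted frame is $\nabla^{c,1}$-parallel and $B$ is left-invariant, $\nabla^{c,1}B=0$; therefore the general expression for the curvature of a connection obtained from $\nabla^{c,1}$ by adding $B$ --- which, because $\nabla^{c,1}$ has torsion, carries an extra term $B\lb(T^{c,1}\lb(X,Y\rb),Z\rb)$ --- collapses to the purely algebraic formula
\[
\mrm{R}^{c,2}\lb(X,Y,Z,V\rb)=g\big(B\lb(X,B\lb(Y,Z\rb)\rb)-B\lb(Y,B\lb(X,Z\rb)\rb)+B\lb(T^{c,1}\lb(X,Y\rb),Z\rb),V\big),
\]
in which $B\lb(\cdot,\cdot\rb)$ is the vector-valued difference tensor and $T^{c,1}\lb(e_i,e_j\rb)=-\lb[e_i,e_j\rb]$. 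Reconstructing $A^2=\tau^{-1}\lb(T^{c,2}\rb)\in\A_{3,5}$ from the given $T^{c,2}$ via $2\,\tau^{-1}\lb(T\rb)\lb(X,Y,Z\rb)=T\lb(X,Y,Z\rb)-T\lb(Y,Z,X\rb)+T\lb(Z,X,Y\rb)$, evaluating the display on the frame, and re-expressing the result in the basis of $\Lambda^2\ox\Lambda^2$ adapted to the $\Un\lb(2\rb)$-decomposition of $\Lambda^2$ (using $\Phi,\omega_1,\omega_2,\omega_3$ for the $\un\lb(2\rb)$-part), gives the formula in c). Contracting, $\Ric^{c,2}\lb(Y,Z\rb)=\sum_i\mrm{R}^{c,2}\lb(e_i,Y,Z,e_i\rb)$, and reading off the diagonal entries yields d).

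For e): by the Ambrose--Singer theorem $\hol^{c,2}$ contains every curvature operator $\mrm{R}^{c,2}\lb(X,Y,\cdot,\cdot\rb)$; by the formula of c) these are linear combinations of the $2$-forms $2e_{12}+e_{34}$, $e_{13}+e_{24}$, $e_{12}-e_{34}$ and $e_{14}-e_{23}$, whose span is precisely $\Lambda^2_1\op\Lambda^2_3=\un\lb(2\rb)$, and since the four $2$-forms $e_{12}$, $3e_{13}+4e_{24}$, $e_{34}$, $3e_{14}-4e_{23}$ appearing in the first tensor slots are linearly independent, these operators already exhaust $\un\lb(2\rb)$; hence $\un\lb(2\rb)\subseteq\hol^{c,2}$. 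Conversely $\nabla^{c,2}$ preserves $g$, $\xi$ and $\vphi$, so its holonomy group is contained in the isotropy group $\Un\lb(2\rb)$ and thus $\hol^{c,2}\subseteq\un\lb(2\rb)$; therefore $\hol^{c,2}=\un\lb(2\rb)$. The only genuinely laborious step is the curvature computation of c): it requires first reconstructing $A^2$ and then carrying out the quadratic composition $B\circ B$ together with the torsion correction while keeping track of all the nonzero components; a), b), d) and e) are then short algebraic observations or formal consequences of c).
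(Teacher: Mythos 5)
Your proposal is correct and proceeds by the same route the paper (implicitly) takes, namely a direct computation in the left-invariant adapted frame --- the paper states this proposition without proof, and your bracket relations, the identity $\nabla^{c,1}_{e_i}e_j=0$ (which gives a) and b) at once), the curvature formula for $\nabla^{c,2}=\nabla^{c,1}+B$ with $R^{c,1}=0$ and $\nabla^{c,1}B=0$, and the Ambrose--Singer plus structure-group argument for e) all check out, as do sample components of the tensor in c) (e.g.\ $R^{c,2}(e_1,e_2)=\tfrac{8}{3}e_{12}+\tfrac{4}{3}e_{34}$ and $R^{c,2}(e_2,e_3)=-\tfrac{8}{9}(e_{14}-e_{23})$). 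Organizing the computation around the $\nabla^{c,1}$-parallel frame rather than directly from $\Omega^{c,2}=\Omega^g+A^2$ is a mild but harmless repackaging.
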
\noindent
Consequently, $T^{c,2}$ is not parallel with respect to $\nabla^{c,2}$.
%
%
%--------------------------------------------------------------------------------------------
%
\subsection{Examples of class \texorpdfstring{$\W_3$ and $\W_5$}{W_3 and W_5}}
Let $H$ be the $5$-dimensional Heisenberg group,
\be
H=\lb\{
\begin{pmatrix}
1&x_1&x_2&x_5\\
0&1&0&x_3\\
0&0&1&x_4\\
0&0&0&1
\end{pmatrix}\in\GL\lb(4,\R\rb)\setsep x_1,\ldots,x_5\in\R\rb\},
\ee
endowed with the left-invariant Riemannian metric
\be
g=\frac{1}{4}\lb(dx_1^2+dx_2^2+dx_3^2+dx_4^2+\lb(dx_5-x_1dx_3-x_2dx_4\rb)^2\rb).
\ee
The vector fields
\ba
&2\frac{\del}{\del x_1}, &
&2\frac{\del}{\del x_2}, &
&2\lb(\frac{\del}{\del x_3}+x_1\frac{\del}{\del x_5}\rb), &
&2\lb(\frac{\del}{\del x_4}+x_2\frac{\del}{\del x_5}\rb), &
&2\frac{\del}{\del x_5} &
\ea
are left-invariant and dual to
\ba
&\frac{1}{2}dx_1, &
&\frac{1}{2}dx_2, &
&\frac{1}{2}dx_3, &
&\frac{1}{2}dx_4, &
&\frac{1}{2}\lb(dx_5-x_1dx_3-x_2dx_4\rb).
\ea
There exist two almost contact metric structures on $\lb(H,g\rb)$. As before, we discuss these structures separately.
\subsubsection{Class \texorpdfstring{$\W_3$}{W_3}}
Let $\lb(\xi,\eta,\vphi\rb)$ be the standard Sasakian structure on $\lb(H,g\rb)$ (cf.\ \cite{Bla76}), i.e.\
\ba
\xi&=2\frac{\del}{\del x_5}, &
\eta&=\frac{1}{2}\lb(dx_5-x_1dx_3-x_2dx_4\rb), &
\Phi&=-\frac{1}{4}\lb(dx_1\wedge dx_3+dx_2\wedge dx_4\rb).
\ea
Then, $\lb(e_1,e_2,e_3,e_4,e_5\rb)$ with
\ba
e_1&:=2\lb(\frac{\del}{\del x_3}+x_1\frac{\del}{\del x_5}\rb),&
e_2&:=2\frac{\del}{\del x_1},&&\\
e_3&:=2\lb(\frac{\del}{\del x_4}+x_2\frac{\del}{\del x_5}\rb),&
e_4&:=2\frac{\del}{\del x_2},&
e_5&:=2\frac{\del}{\del x_5}
\ea
is an adapted frame of $\lb(H,g,\xi,\eta,\vphi\rb)$. With respect to this frame, the non-zero
connection forms of the Levi-Civita connection are
\ba
\omega_{12}^g&=\omega_{34}^g=e_5, &
\omega_{15}^g&=e_2, &
\omega_{25}^g&=-e_1, &
\omega_{35}^g&=e_4, &
\omega_{45}^g&=-e_3.
\ea
Consequently,
\be
\Gamma = -e_1\ox e_{25}+e_2\ox e_{15}-e_3\ox e_{45}+e_4\ox e_{35}.
\ee
\begin{prop}
The almost contact metric manifold $\lb(H,g,\xi,\eta,\vphi\rb)$ has the following properties:
\bite
\item[a)]The almost contact metric manifold is of class $\W_3$.
\item[b)]The Nijenhuis tensor $N$ vanishes.
\item[c)]The fundamental form $\Phi$ and the $1$-form $\eta$ satisfy
\ba
d\Phi&=0,&
\delta\Phi&=4\,\eta,&
d\eta&=2\,\Phi, &
\delta\eta&=0.
\ea
\item[d)]The Riemannian curvature tensor is
\ba
\mrm{R}^g=&3\lb(e_{12}\ox e_{12}+e_{34}\ox e_{34}\rb)+2\lb(e_{12}\ox e_{34}+e_{34}\ox e_{12}\rb)\\
          &+e_{13}\ox e_{24}+e_{24}\ox e_{13}-e_{14}\ox e_{23}-e_{23}\ox e_{14}\\
          &-e_{15}\ox e_{15}-e_{25}\ox e_{25}-e_{35}\ox e_{35}-e_{45}\ox e_{45}.
\ea
\item[e)]The Riemannian Ricci tensor is
\be
\Ric^g=\diag\lb(-2,-2,-2,-2,4\rb).
\ee
\item[f)]The Riemannian holonomy algebra is
\be
\hol^g=\so\lb(5\rb).
\ee
\eite
\end{prop}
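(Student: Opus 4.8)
The plan is to read off everything from the intrinsic torsion $\Gamma$ and the Levi-Civita connection forms displayed above. For part (a) I would first note that $\ast\Phi=\Phi\wedge\eta$ spans $\Lambda^3_1$, so $\A_{2,1}=\theta_2\lb(\Lambda^3_1\rb)$ is spanned by $\theta_2\lb(\Phi\wedge\eta\rb)=\sum_i e_i\ox\lb(e_i\hook\lb(\Phi\wedge\eta\rb)\rb)$. Computing the interior products gives $\theta_2\lb(\Phi\wedge\eta\rb)=e_1\ox e_{25}-e_2\ox e_{15}+e_3\ox e_{45}-e_4\ox e_{35}+e_5\ox\Phi$; since $\Phi$ spans $\Lambda^2_1\subset\un\lb(2\rb)$, the last summand dies under $\pr_\m$, whence $\pr_{\W}\lb(\theta_2\lb(\Phi\wedge\eta\rb)\rb)=-\Gamma$. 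Therefore $\Gamma\in\pr_{\W}\lb(\A_{2,1}\rb)=\W_3$, which is part (a). Part (b) then follows at once from the last assertion of theorem \ref{thm:1}: the Nijenhuis tensor vanishes precisely when the manifold is of class $\W_1\op\W_3\op\W_5\op\W_6\op\W_8$, and $\W_3$ is contained in this module.

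For part (c) I would obtain $\nabla^g\Phi$ directly from $\Gamma$ via formula (\ref{eqn:1}), which yields $\nabla^g_{e_i}\Phi=-e_{i5}$ for $i=1,\ldots,4$ and $\nabla^g_{e_5}\Phi=0$ (equivalently these follow immediately from the displayed $\omega^g_{ij}$), and then deduce the $\nabla^g_{e_i}\eta$ from lemma \ref{lem:2}. Feeding these into $d\alpha=\sum_i e_i\wedge\nabla^g_{e_i}\alpha$ and $\delta\alpha=-\sum_i e_i\hook\nabla^g_{e_i}\alpha$ produces $d\Phi=0$, $\delta\Phi=4\,\eta$, $d\eta=2\,\Phi$ and $\delta\eta=0$. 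For part (d) I would pass to the curvature: using $d\alpha=\sum_i e_i\wedge\nabla^g_{e_i}\alpha$ to compute the $de_i$, the curvature $2$-forms $\Omega^g_{ij}=d\omega^g_{ij}+\sum_k\omega^g_{ik}\wedge\omega^g_{kj}$ are evaluated on the pairs $e_k\wedge e_l$, and collecting the results gives the stated expression for $\mrm{R}^g$.

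Part (e) is just the contraction of the $\mrm{R}^g$ found in part (d), giving $\Ric^g=\diag\lb(-2,-2,-2,-2,4\rb)$. For part (f), viewing $\mrm{R}^g$ as a symmetric endomorphism of $\Lambda^2\cong\so\lb(5\rb)$, one checks from the explicit formula that its image is all of $\Lambda^2$: the $2\times2$ block on $\mrm{span}\lb(e_{12},e_{34}\rb)$ has determinant $5\neq0$, $\mrm{R}^g$ sends $e_{13}\mapsto e_{24}$, $e_{24}\mapsto e_{13}$, $e_{14}\mapsto-e_{23}$, $e_{23}\mapsto-e_{14}$, and fixes each of $e_{15},e_{25},e_{35},e_{45}$ up to sign, so all ten basis $2$-forms lie in the image. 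By the Ambrose--Singer theorem the holonomy algebra contains the image of the curvature operator, hence $\hol^g=\so\lb(5\rb)$. The only genuinely laborious step is the curvature computation in part (d), which requires careful bookkeeping of the $de_i$ and of the wedge products $\omega^g_{ik}\wedge\omega^g_{kj}$ with all signs tracked; once $\mrm{R}^g$ is in hand, parts (e) and (f) are immediate.
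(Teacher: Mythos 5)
Your proposal is correct and follows essentially the same route as the paper, which states this proposition as a direct computation from the displayed connection forms $\omega^g_{ij}$ and the intrinsic torsion $\Gamma$ (identifying $\Gamma$ with $-\pr_{\W}\lb(\theta_2\lb(\Phi\wedge\eta\rb)\rb)$ for part a), invoking theorem \ref{thm:1} for part b), and then computing $d$, $\delta$, curvature, Ricci and holonomy from the frame. The only caution is in part d): with the paper's convention $\omega^g_{ij}=g\lb(\nabla^g e_i,e_j\rb)$ the structure equation reads $R\lb(\cdot,\cdot,e_i,e_j\rb)=d\omega^g_{ij}+\sum_k\omega^g_{kj}\wedge\omega^g_{ik}$, i.e.\ the quadratic term has the opposite order from the one you wrote (a quick check: $R\lb(e_1,e_2,e_1,e_2\rb)=3$ requires $d e_5+\omega^g_{52}\wedge\omega^g_{15}=3e_{12}+2e_{34}$, whereas your ordering would give $e_{12}+2e_{34}$).
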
\noindent
With the aid of theorem \ref{thm:4}, we deduce that there exists a unique metric connection $\nabla^{c,1}$ with totally skew-symmetric torsion compatible with the underlying almost contact metric structure. Moreover, there exists a unique compatible connection $\nabla^{c,2}$ with traceless cyclic torsion (see theorem \ref{thm:5}). Explicitly, the corresponding torsion tensors are
\ba
T^{c,1}&=2\lb(e_{25}\ox e_1-e_{15}\ox e_2+e_{45}\ox e_3-e_{35}\ox e_4+\lb(e_{12}+e_{34}\rb)\ox e_5\rb)\in\T_{2,1},\\
T^{c,2}&=-e_{25}\ox e_1+e_{15}\ox e_2-e_{45}\ox e_3+e_{35}\ox e_4+2\lb(e_{12}+e_{34}\rb)\ox e_5\in\T_{3,1}.
\ea
\begin{prop}\label{prop:7}
The metric connections $\nabla^{c,1}$ and $\nabla^{c,2}$ have the following properties:
\bite
\item[a)]The curvature tensors with respect to $\nabla^{c,1}$ and $\nabla^{c,2}$ are
\ba
\mrm{R}^{c,1}=&4\lb(e_{12}+e_{34}\rb)\ox\lb(e_{12}+e_{34}\rb), &
\mrm{R}^{c,2}=&-2\lb(e_{12}+e_{34}\rb)\ox\lb(e_{12}+e_{34}\rb).
\ea
\item[b)]The Ricci tensors with respect to $\nabla^{c,1}$ and $\nabla^{c,2}$ are
\ba
\Ric^{c,1}&=\diag\lb(-4,-4,-4,-4,0\rb),&
\Ric^{c,2}&=\diag\lb(2,2,2,2,0\rb).
\ea
\item[c)]The holonomy algebrae of $\nabla^{c,1}$ and $\nabla^{c,2}$ are
\be
\hol^{c,1}=\hol^{c,2}=\un\lb(1\rb)\subset\un\lb(2\rb).
\ee
\item[d)]The torsion tensors of $\nabla^{c,1}$ and $\nabla^{c,2}$ are parallel, i.e.\ $\nabla^{c,i}T^{c,i}=0$.
\eite
\end{prop}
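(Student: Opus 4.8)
The plan is to reconstruct $\nabla^{c,1}$ and $\nabla^{c,2}$ explicitly in the adapted frame $\lb(e_1,\ldots,e_5\rb)$ of this subsection and then to carry out the curvature bookkeeping by hand. Since the torsion tensors $T^{c,1}$ and $T^{c,2}$ are already given, I would first recover the defining $\lb(3,0\rb)$-tensors $A^{c,i}=\tau^{-1}\lb(T^{c,i}\rb)$ from the formula $2\,\tau^{-1}\lb(T\rb)\lb(X,Y,Z\rb)=T\lb(X,Y,Z\rb)-T\lb(Y,Z,X\rb)+T\lb(Z,X,Y\rb)$ of \autoref{sec:2.2}. Computing the Hodge operator in $\R^5$ identifies $T^{c,1}=2\,\ast\Phi=2\,\Phi\wedge\eta$ and $T^{c,2}=-\ast\Phi+3\,\Phi\ox\eta$, so that $A^{c,1}=\Phi\wedge\eta$ and $A^{c,2}$ is a linear combination of $\ast\Phi$ and $\eta\ox\Phi$. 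Adding $A^{c,i}$ to the Levi-Civita connection forms listed in the preceding proposition, the connection forms $\omega^{c,i}_{jk}=\omega^{g}_{jk}+A^{c,i}\lb(\cdot,e_j,e_k\rb)$ of $\nabla^{c,i}$ come out very simple: one finds that the only non-zero ones are $\omega^{c,1}_{12}=\omega^{c,1}_{34}=2\,\eta$ and $\omega^{c,2}_{12}=\omega^{c,2}_{34}=-\eta$.

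With the connection forms in hand, I would obtain a) and b) by computing the curvature directly. Using $d\eta=2\,\Phi$ — equivalently the left-invariant coframe relations $de^j=\sum_k\omega^{g}_{jk}\wedge e^k$ — together with the Lie brackets of the frame (recovered from $\nabla^{g}$), and observing that the quadratic contributions to the curvature of $\nabla^{c,i}$ vanish, one is left with $\mrm{R}^{c,1}_{12}=\mrm{R}^{c,1}_{34}=2\,d\eta=4\,\Phi$, i.e.\ $\mrm{R}^{c,1}=4\,\Phi\ox\Phi$, and similarly $\mrm{R}^{c,2}=-2\,\Phi\ox\Phi$. Contracting these explicit $\lb(4,0\rb)$-tensors then yields the Ricci tensors of b), only the plane orthogonal to $\xi$ contributing because $\xi\hook\Phi=0$.

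For c) and d) I would use that $g$, $\eta$, $\vphi$ and hence $\Phi$ (and the Hodge operator) are $\nabla^{c,i}$-parallel, since $\nabla^{c,i}$ is compatible with the almost contact metric structure. As $T^{c,1}$ and $T^{c,2}$ have just been written as algebraic expressions in $\Phi$ and $\eta$, this gives $\nabla^{c,i}T^{c,i}=0$ at once, proving d). For c), part a) shows $\mrm{R}^{c,i}\lb(X,Y\rb)=c_i\,\Phi\lb(X,Y\rb)\cdot\vphi$ with $c_1=4$ and $c_2=-2$; hence every curvature operator lies in the non-trivial line $\R\,\vphi=\Lambda^2_1\subset\un\lb(2\rb)$, and $\mrm{R}^{c,i}=c_i\,\Phi\ox\Phi$ is itself $\nabla^{c,i}$-parallel, so the Ambrose--Singer theorem identifies $\hol^{c,i}$ with the span of the curvature operators, namely $\R\,\vphi\cong\un\lb(1\rb)$.

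I expect the main obstacle to be purely computational bookkeeping rather than any conceptual point: keeping straight which identification of $\Lambda^2$ with $\so\lb(5\rb)$ is in force, which contraction defines $\Ric^{c,i}$, and the signs induced by the conventions $g\lb(\nabla^{c}_XY,Z\rb)=g\lb(\nabla^{g}_XY,Z\rb)+A\lb(X,Y,Z\rb)$ and $\omega_{ij}=g\lb(\nabla e_i,e_j\rb)$, so that the scalar factors $4$ and $-2$, the entries of $\Ric^{c,i}$, and the position of $\un\lb(1\rb)$ inside $\un\lb(2\rb)$ come out exactly as stated.
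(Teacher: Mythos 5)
Your proposal is correct and follows essentially the route the paper intends (the proposition is stated as the outcome of a direct computation): recover $A^{c,i}=\tau^{-1}(T^{c,i})$, observe that the only surviving connection forms are $\omega^{c,1}_{12}=\omega^{c,1}_{34}=2\,\eta$ and $\omega^{c,2}_{12}=\omega^{c,2}_{34}=-\eta$, and conclude via $d\eta=2\,\Phi$ with vanishing quadratic terms; I have checked these connection forms and the resulting curvature, Ricci, holonomy and parallel-torsion claims, and they all come out as stated. Your observation that $T^{c,1}=2\ast\Phi$ and $T^{c,2}=-\ast\Phi+3\,\Phi\ox\eta$ are algebraic in the $\nabla^{c,i}$-parallel tensors $\Phi$ and $\eta$ is a clean way to get part d) without further computation.
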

\subsubsection{Class \texorpdfstring{$\W_5$}{W_5}}
Let $\lb({\xi},{\eta},{\vphi}\rb)$ be the almost contact metric structure on $\lb(H,g\rb)$ such that $\lb({e}_1,{e}_2,{e}_3,{e}_4,{e}_5\rb)$ defined via
\ba
{e}_1&:=2\frac{\del}{\del x_1},&
{e}_2&:=2\frac{\del}{\del x_2},&&\\
{e}_3&:=2\lb(\frac{\del}{\del x_3}+x_1\frac{\del}{\del x_5}\rb),&
{e}_4&:=2\lb(\frac{\del}{\del x_4}+x_2\frac{\del}{\del x_5}\rb),&
{e}_5&:=2\frac{\del}{\del x_5}
\ea
is an adapted frame of $\lb(H,g,{\xi},{\eta},{\vphi}\rb)$. By a comparison with the adapted frame discussed before, we have
\ba
\omega_{13}^g&=\omega_{24}^g=-e_5&
\omega_{15}^g&=-e_3, &
\omega_{25}^g&=-e_4, &
\omega_{35}^g&=e_1, &
\omega_{45}^g&=e_2
\ea
and
\be
\omega_{12}^g=\omega_{14}^g=\omega_{23}^g=\omega_{34}^g=0.
\ee
Therefore,
\be
\Gamma = {e}_1\ox {e}_{35}+{e}_2\ox {e}_{45}-{e}_3\ox {e}_{15}-{e}_4\ox {e}_{25}.
\ee
\begin{prop}
The almost contact metric manifold $\lb(H,g,{\xi},{\eta},{\vphi}\rb)$ is of class $\W_5$. Moreover, the Nijenhuis tensor of $\lb(H,g,{\xi},{\eta},{\vphi}\rb)$ vanishes and
\ba
d{\Phi}&=0,&
\delta{\Phi}&=0,&
d{\eta}\wedge{\Phi}&=0, &
\delta{\eta}&=0.
\ea
\end{prop}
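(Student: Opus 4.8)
The intrinsic torsion $\Gamma = e_1\ox e_{35}+e_2\ox e_{45}-e_3\ox e_{15}-e_4\ox e_{25}$ has already been extracted from the Levi-Civita connection forms, so the plan is to locate the irreducible $\Un(2)$-module that contains it and then read off the differential identities, mostly from Theorem~\ref{thm:3} and with one short explicit computation. I would begin with part~(a). Computing $d\eta$ — either directly from $\eta = \frac{1}{2}\lb(dx_5-x_1\,dx_3-x_2\,dx_4\rb)$, or from the given $\omega^g_{i5}$ through $d\eta = \sum_i e_i\wedge\nabla^g_{e_i}\eta$ — gives $d\eta = -2\lb(e_{13}+e_{24}\rb) = -2\,\omega_2$, which lies in $\Lambda^2_3$. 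Hence $\ast d\eta \in \ast\Lambda^2_3 = \Lambda^3_3$, so $\theta_2\lb(\ast d\eta\rb) \in \theta_2\lb(\Lambda^3_3\rb) = \A_{2,3}$ and $\pr_{\W}\lb(\theta_2\lb(\ast d\eta\rb)\rb) \in \W_5$. It then remains to verify that $\Gamma$ is proportional to $\pr_{\W}\lb(\theta_2\lb(\ast d\eta\rb)\rb)$: evaluating $\theta_2\lb(\alpha\rb) = \sum_i e_i\ox\lb(e_i\hook\alpha\rb)$ at $\alpha = e_{135}+e_{245} = -\ast\omega_2$ yields $\theta_2\lb(e_{135}+e_{245}\rb) = \Gamma + e_5\ox\lb(e_{13}+e_{24}\rb)$, and since $e_{13}+e_{24} = \omega_2 \in \un(2)$ is orthogonal to $\m$, the projection $\pr_{\W}$ annihilates the second summand. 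Thus $\Gamma = \pr_{\W}\lb(\theta_2\lb(e_{135}+e_{245}\rb)\rb) = \frac{1}{2}\,\pr_{\W}\lb(\theta_2\lb(\ast d\eta\rb)\rb) \in \W_5$, which proves part~(a).

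For the rest, having $\Gamma\in\W_5$ in hand I would simply invoke Theorem~\ref{thm:3}: because $\W_5$ is contained in $\W_1\op\W_3\op\W_5\op\W_6\op\W_8$, in $\W_3\op\W_5$, and in $\W_4\op\W_5\op\W_7\op\W_8\op\W_9\op\W_{10}$, the manifold $\lb(H,g,\xi,\eta,\vphi\rb)$ is normal, quasi-Sasaki and semi-cosymplectic, hence $N = 0$, $d\Phi = 0$, and $\delta\Phi = \delta\eta = 0$. The final identity $d\eta\wedge\Phi = 0$ is then immediate from the expression for $d\eta$ found above: $d\eta = -2\lb(e_{13}+e_{24}\rb) \in \Lambda^2_3 \subset \Lambda^2_2\op\Lambda^2_3 = \lb\{\alpha\in\Lambda^2\setsep\Phi\wedge\alpha=0\rb\}$, so $\Phi\wedge d\eta = 0$.

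The only part that is not a one-line appeal to the classification theorems is part~(a), and there the delicate step is getting the Hodge-star signs right — i.e.\ recognising $e_{135}+e_{245}$ as a multiple of $\ast\omega_2\in\Lambda^3_3$ — together with the (short but not entirely mechanical) evaluation of $\theta_2\lb(e_{135}+e_{245}\rb)$ from the definition of $\theta_2$. It is worth noting that normality and $d\Phi = 0$ do not by themselves imply $d\eta\wedge\Phi = 0$ (this already fails for Sasaki manifolds, which are of class $\W_3$), so the explicit computation of $d\eta$ is genuinely required, both for the last identity and as the engine of part~(a).
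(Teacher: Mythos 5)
Your proposal is correct and follows essentially the route the paper intends: the paper states this proposition without explicit proof, having just displayed $\Gamma = e_1\ox e_{35}+e_2\ox e_{45}-e_3\ox e_{15}-e_4\ox e_{25}$, and the implied argument is exactly your identification of $\Gamma$ as $\pr_{\W}\lb(\theta_2\lb(\ast d\eta\rb)\rb)/2$ with $\ast d\eta\in\Lambda^3_3$ (so $\Gamma\in\W_5$) followed by the appeal to theorem \ref{thm:3} and the direct check that $d\eta=-2\lb(e_{13}+e_{24}\rb)\in\Lambda^2_3$ annihilates $\Phi$ under wedging. All signs and module identifications in your computation check out.
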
\noindent
With respect to the given frame, the Riemannian curvature tensor is
\ba
\mrm{R}^g=&3\lb(e_{13}\ox e_{13}+e_{24}\ox e_{24}\rb)+2\lb(e_{13}\ox e_{24}+e_{24}\ox e_{13}\rb)\\
          &+e_{12}\ox e_{34}+e_{34}\ox e_{12}+e_{14}\ox e_{23}+e_{23}\ox e_{14}\\
          &-e_{15}\ox e_{15}-e_{25}\ox e_{25}-e_{35}\ox e_{35}-e_{45}\ox e_{45}.
\ea
Using the same arguments as before, there exist two uniquely determined metric connections ${\nabla}^{c,1}$, ${\nabla}^{c,2}$ compatible with the almost contact metric structure. The respective torsion tensors are
\ba
{T}^{c,1}&=-2\lb({e}_{35}\ox {e}_1+{e}_{45}\ox {e}_2-{e}_{15}\ox {e}_3-{e}_{25}\ox {e}_4+\lb({e}_{13}+{e}_{24}\rb)\ox {e}_5\rb)\in\T_{2,3},\\
{T}^{c,2}&={e}_{35}\ox {e}_1+{e}_{45}\ox {e}_2-{e}_{15}\ox {e}_3-{e}_{25}\ox {e}_4-2\lb({e}_{13}+{e}_{24}\rb)\ox {e}_5\in\T_{3,3}.
\ea
Again, we compute the corresponding curvature tensors:
\ba
\mrm{R}^{c,1}=&4\lb(e_{13}+e_{24}\rb)\ox\lb(e_{13}+e_{24}\rb), &
\mrm{R}^{c,2}=&-2\lb(e_{13}+e_{24}\rb)\ox\lb(e_{13}+e_{24}\rb).
\ea
Consequently, proposition \ref{prop:7},b)-d) is also valid for the connections ${\nabla}^{c,1}$ and ${\nabla}^{c,2}$ considered here.
%
%
%--------------------------------------------------------------------------------------------
%
\subsection{Examples of class \texorpdfstring{$\W_6$ and $\W_9$}{W_6 and W_9}}
The examples presented in this subsection are products of certain almost Hermitian $4$-manifolds with $\R$. The general construction scheme is as follows: Let $\lb(M^4,\tilde{g},J\rb)$ be a $4$-dimensional almost Hermitian manifold, i.e.\ $\lb(M^4,\tilde{g}\rb)$ is a $4$-dimensional Riemannian manifold equipped with an orthogonal almost complex structure $J: TM^4\ra TM^4$,
\ba
J^2 &= -\Id, & 
\tilde{g}(JX,JY) &= \tilde{g}(X,Y).
\ea
Moreover, let $t$ be the coordinate of $\R$. Then, on $M^4\x\R$, we set
\be\label{eqn:3}\tag{\decosix}\begin{aligned}
\xi&:=\frac{\del}{\del t},&
\eta&:=dt,&
\vphi\lb(X+f\frac{\del}{\del t}\rb)&:=JX\\
\end{aligned}\ee
and
\be\label{eqn:4}\tag{\decosix\decosix}
g\lb(X_1+f_1\frac{\del}{\del t},X_2+f_2\frac{\del}{\del t}\rb):=\tilde{g}\lb(X_1,X_2\rb)+f_1f_2
\ee
for $X,X_1,X_2$ tangent to $M^4$ and functions $f,f_1,f_2$ on $M^4\x\R$. As a result, the tuple $\lb(M^4\x\R,g,\xi,\eta,\vphi\rb)$ is an almost contact metric $5$-manifold (see \cite{Bla76}).
\subsubsection{Class \texorpdfstring{$\W_6$}{W_6}}
The following Riemannian $4$-manifold appears in the classification \cite{KT87}. Let $M^4$ be the Lie group
\be
\lb\{
\begin{pmatrix}
e^{x_4}&0&0&x_1\\
0&e^{x_4}&0&x_2\\
0&0&e^{-2x_4}&x_3\\
0&0&0&1
\end{pmatrix}\in\GL\lb(4,\R\rb)\setsep x_1,\ldots,x_4\in\R\rb\}
\ee
equipped with the left-invariant Riemannian metric
\be
\tilde{g}=e^{-2x_4}dx_1^2+e^{-2x_4}dx_2^2+e^{4x_4}dx_3^2+dx_4^2.
\ee
An orthonormal frame on $\lb(M^4,\tilde{g}\rb)$ is given by the left-invariant vector fields
\ba
e_1&:=e^{x_4}\frac{\del}{\del x_1}, &
e_2&:=e^{x_4}\frac{\del}{\del x_2}, &
e_3&:=e^{-2x_4}\frac{\del}{\del x_3}, &
e_4&:=\frac{\del}{\del x_4}.
\ea
Using these, we define an orthogonal almost complex structure $J$ on $\lb(M^4,\tilde{g}\rb)$ as
\ba
Je_1&=-e_2, &
Je_2&=e_1, &
Je_3&=-e_4, &
Je_4&=e_3.
\ea
Now, let $\lb(M^4\x\R,g,\xi,\eta,\vphi\rb)$ be the almost contact metric manifold constructed via (\ref{eqn:3}) and (\ref{eqn:4}). The connection forms of the Levi-Civita connection $\nabla^g$ with respect to the adapted frame $\lb(e_1,\ldots,e_4,e_5:=\xi\rb)$ are
\ba
\omega^g_{12}&=\omega^g_{13}=0, &
\omega^g_{14}&=e_1, &
\omega^g_{23}&=0, &
\omega^g_{24}&=e_2, &
\omega^g_{34}&=-2\,e_3, &
\omega^g_{i5}&=0.
\ea
Therefore, the intrinsic torsion of $\lb(M^4\x\R,g,\xi,\eta,\vphi\rb)$ is
\be
\Gamma=\frac{1}{2}\lb(  e_1\ox\lb(e_{14}+e_{23}\rb) - e_2\ox\lb(e_{13}-e_{24}\rb) \rb).
\ee
\begin{prop}
The almost contact metric manifold $\lb(M^4\x\R,g,\xi,\eta,\vphi\rb)$ has the following properties:
\bite
\item[a)]The almost contact metric manifold is of class $\W_6$.
\item[b)]The Nijenhuis tensor $N$ vanishes.
\item[c)]The fundamental form $\Phi$ and the $1$-form $\eta$ satisfy
\ba
d\Phi\wedge\delta\Phi&=-2\,\Phi\wedge\Phi,&
d\eta&=0, &
\delta\eta&=0.
\ea
\item[d)]The Riemannian curvature tensor is
\ba
\mrm{R}^g=&e_{12}\ox e_{12}-2\,e_{13}\ox e_{13}+e_{14}\ox e_{14}\\
          &-2\,e_{23}\ox e_{23}+e_{24}\ox e_{24}+4\, e_{34}\ox e_{34}.
\ea
\item[e)]The Riemannian Ricci tensor is
\be
\Ric^g=\diag\lb(0,0,0,-6,0\rb).
\ee
\item[f)]The Riemannian holonomy algebra is
\be
\hol^g=\so\lb(4\rb)\subset\so\lb(5\rb).
\ee
\eite
\end{prop}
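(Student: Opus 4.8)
The plan is to carry out everything in the adapted frame $(e_1,\ldots,e_4,e_5=\xi)$ and to reduce each of the six claims to the explicit intrinsic torsion $\Gamma$ and the structure constants of $M^4$. From the given left-invariant vector fields one reads off that the only non-vanishing Lie brackets on $M^4$ are $[e_4,e_1]=e_1$, $[e_4,e_2]=e_2$ and $[e_4,e_3]=-2\,e_3$, and that $e_5=\xi$ is central; feeding these into the Koszul formula reproduces the connection forms $\omega^g_{ij}$ listed above. Since all $\omega^g_{i5}=0$, we have $\nabla^g\xi=0$ and hence $\nabla^g\eta=0$, and projecting $\Omega^g$ onto $\m=\Lambda^2_2\op\Lambda^2_4$ — using that $\Lambda^2_2=\mathrm{span}(e_{13}-e_{24},\,e_{14}+e_{23})$ is the orthogonal complement of $\un(2)$ among the horizontal $2$-forms — yields the stated $\Gamma$.

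For a) I would exhibit $\Gamma$ as an element of $\W_6=\pr_\W(\A_{2,4})$. Since $\ast e_{45}$ is a multiple of $e_{123}$, we have $e_{123}\in\Lambda^3_4=\ast\Lambda^2_4$, so $\theta_2(e_{123})=e_1\ox e_{23}-e_2\ox e_{13}+e_3\ox e_{12}\in\A_{2,4}$; applying $\pr_\W$ (i.e.\ $\pr_\m$ in the second tensor factor, where $\pr_\m e_{12}=0$) returns exactly $\Gamma$, which is non-zero. Hence the manifold is of class $\W_6$. Part b) is then immediate: $\W_6\subset\W_1\op\W_3\op\W_5\op\W_6\op\W_8$, so theorem \ref{thm:1} forces the Nijenhuis tensor to vanish.

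For c), $\nabla^g\eta=0$ already gives $d\eta=\delta\eta=0$. For the fundamental form I would invoke formula (\ref{eqn:1}), $\nabla^g_{e_i}\Phi=\sigma_1(\Gamma(e_i),\Phi)$; a short computation yields $\nabla^g_{e_1}\Phi=e_{13}-e_{24}$, $\nabla^g_{e_2}\Phi=e_{14}+e_{23}$ and $\nabla^g_{e_j}\Phi=0$ for $j\geq3$, so that $d\Phi=\sum_i e_i\wedge\nabla^g_{e_i}\Phi=-2\,e_{124}$ and $\delta\Phi=-\sum_i e_i\hook\nabla^g_{e_i}\Phi=-2\,e_3$. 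Consequently $d\Phi\wedge\delta\Phi=4\,e_{124}\wedge e_3=-4\,e_{1234}$, while $\Phi\wedge\Phi=2\,e_{1234}$, i.e.\ $d\Phi\wedge\delta\Phi=-2\,\Phi\wedge\Phi$.

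It remains to treat d)--f). Because the $\omega^g_{ij}$ have constant coefficients in the coframe, the structure equation $\mrm{R}^g(e_i,e_j)=d\omega^g(e_i,e_j)+(\omega^g\wedge\omega^g)(e_i,e_j)$ reduces to a purely algebraic expression in the $\omega^g(e_k)$ and the three non-trivial brackets; evaluating it gives the diagonal operator on $\Lambda^2$ displayed in d) — equivalently, $\mrm{R}^g$ is the pullback of the curvature of the solvable $4$-manifold $(M^4,\tilde g)$ — and tracing over the first and last arguments gives $\Ric^g=\diag(0,0,0,-6,0)$. Finally, $\nabla^g\xi=0$ makes $\xi^\perp=\mathrm{span}(e_1,\ldots,e_4)$ a parallel subbundle, so $\hol^g\subseteq\so(\xi^\perp)=\so(4)$; conversely the image of $\mrm{R}^g$ at any point is already all of $\Lambda^2(\xi^\perp)\cong\so(4)$, so by the Ambrose--Singer theorem $\hol^g=\so(4)$. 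The only genuinely laborious step is the curvature computation in d); everything else drops out quickly from the explicit $\Gamma$ together with the machinery of sections \ref{sec:2}--\ref{sec:3}.
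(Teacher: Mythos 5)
Your proposal is correct and follows essentially the same route the paper intends: all claims are verified by direct computation in the adapted frame from the listed connection forms and the explicit $\Gamma$, with part a) obtained by exhibiting $\Gamma=\pr_{\W}\bigl(\theta_2\lb(e_{123}\rb)\rb)\in\W_6$ and part b) read off from theorem \ref{thm:1}. I checked the key numerical steps ($d\Phi=-2\,e_{124}$, $\delta\Phi=-2\,e_3$, the curvature components and the Ricci trace) and they all come out as you state.
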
\noindent
Since $\lb(M^4\x\R,g,\xi,\eta,\vphi\rb)$ is of class $\W_6$, there exist both a unique metric connection $\nabla^{c,1}$ with totally skew-symmetric torsion and a unique metric connection $\nabla^{c,2}$ with traceless cyclic torsion each compatible with the underlying almost contact metric structure (cf.\ theorems \ref{thm:4} and \ref{thm:5}). The corresponding torsion tensors are
\ba
T^{c,1}&=-2\lb(e_{23}\ox e_1-e_{13}\ox e_2+e_{12}\ox e_3\rb)\in\T_{2,4},\\
T^{c,2}&=\frac{1}{2}\lb(
 \lb(e_{14}+e_{23}\rb)\ox e_1
-\lb(e_{13}-e_{24}\rb)\ox e_2\rb)
-\lb(e_{12}+e_{34}\rb)\ox e_3\in\T_{3,4}.
\ea
\begin{prop}
The metric connections $\nabla^{c,1}$ and $\nabla^{c,2}$ have the following properties:
\bite
\item[a)]The curvature tensors with respect to $\nabla^{c,1}$ and $\nabla^{c,2}$ are
\ba
\mrm{R}^{c,1}=&2\,\lb(e_{12}+e_{34}\rb)\ox\lb(e_{12}-e_{34}\rb)+\lb(e_{14}+e_{23}\rb)\ox\lb(e_{14}-e_{23}\rb)\\
&-\lb(e_{13}-e_{24}\rb)\ox\lb(e_{13}+e_{24}\rb)+6\,e_{34}\ox e_{34},\\
\mrm{R}^{c,2}=&\frac{1}{2}\lb(\lb(e_{12}-4\,e_{34}\rb)\ox\lb(e_{12}-e_{34}\rb)+\lb(e_{14}+2\,e_{23}\rb)\ox\lb(e_{14}-e_{23}\rb)\rb.\\
&\lb.-\lb(2\,e_{13}-e_{24}\rb)\ox\lb(e_{13}+e_{24}\rb)\rb).
\ea
\item[b)]The Ricci tensors with respect to $\nabla^{c,1}$ and $\nabla^{c,2}$ are
\ba
\Ric^{c,1}&=\diag\lb(-2,-2,-2,-6,0\rb),\\
\Ric^{c,2}&=\diag\lb(0,0,0,-3,0\rb).
\ea
\item[c)]The holonomy algebrae of $\nabla^{c,1}$ and $\nabla^{c,2}$ are
\ba
\hol^{c,1}&=\un\lb(2\rb),&
\hol^{c,2}&=\su\lb(2\rb)\subset\un\lb(2\rb).
\ea
\eite
\end{prop}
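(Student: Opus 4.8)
The plan is to realise the two connections concretely in the adapted frame $\lb(e_1,\ldots,e_4,e_5=\xi\rb)$, to compute their curvature via the structure equations, and then to extract the Ricci tensors and holonomy algebrae from the outcome. First I would write down the connection forms. Since the torsion tensors $T^{c,1}\in\T_{2,4}$ and $T^{c,2}\in\T_{3,4}$ have already been exhibited, the tensors $A^{c,i}\in\A$ defining $\nabla^{c,i}$ through $g\lb(\nabla^{c,i}_XY,Z\rb)=g\lb(\nabla^g_XY,Z\rb)+A^{c,i}\lb(X,Y,Z\rb)$ are recovered by inverting $\tau$ on $\A_2$, respectively $\A_3$; equivalently, one may substitute the values of $d\Phi$, $\delta\Phi$, $d\eta$, $\delta\eta$ and $N$ computed in the preceding proposition into the closed formulae of theorems \ref{thm:4} and \ref{thm:5}. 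Either way one obtains the $\so\lb(5\rb)$-valued connection $1$-forms $\Omega^{c,i}=\lb(\omega^{c,i}_{jk}\rb)$ as explicit combinations of the $e_j$.

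Next, the exterior derivatives $de_j=-\sum_k\omega^g_{jk}\wedge e_k$ of the coframe (equivalently, the structure constants of $M^4\x\R$) are read off from the Levi-Civita forms listed before that proposition. Substituting these into the second structure equation
\be
\mrm{R}^{c,i}_{jk}=d\omega^{c,i}_{jk}+\textstyle\sum_\ell\omega^{c,i}_{j\ell}\wedge\omega^{c,i}_{\ell k},
\ee
expanding, and rewriting the outcome as an element of $\Lambda^2\ox\Lambda^2$ gives part a). Part b) then follows by the contraction $\Ric^{c,i}\lb(X,Y\rb)=\sum_j\mrm{R}^{c,i}\lb(e_j,X,Y,e_j\rb)$.

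For part c), recall that each $\nabla^{c,i}$ is compatible with the almost contact metric structure, hence reduces to the $\Un\lb(2\rb)$-principal subbundle; therefore $\hol^{c,i}\subseteq\un\lb(2\rb)=\Lambda^2_1\op\Lambda^2_3$, and every subbundle associated with this reduction---in particular $\Lambda^2_3\cong\su\lb(2\rb)$---is $\nabla^{c,i}$-parallel. By the Ambrose--Singer theorem, $\hol^{c,i}$ is the Lie algebra generated by the curvature endomorphisms $\mrm{R}^{c,i}\lb(X,Y\rb)$ and their iterated $\nabla^{c,i}$-covariant derivatives. Viewing $\mrm{R}^{c,i}\lb(X,Y\rb)\in\so\lb(5\rb)$ as a $2$-form, the span of these endomorphisms over all $X,Y$ is precisely the span of the right-hand tensor factors appearing in the formula of part a): for $i=1$ this span is $\lan\Phi,\omega_1,\omega_2,\omega_3\ran=\un\lb(2\rb)$ (the term $6\,e_{34}\ox e_{34}$ contributes the $\Phi$-direction, since $2\,e_{34}=\Phi-\omega_1$), which already forces $\hol^{c,1}=\un\lb(2\rb)$; for $i=2$ it is $\lan\omega_1,\omega_2,\omega_3\ran=\su\lb(2\rb)$. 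Moreover $\mrm{R}^{c,2}$ takes its values in the $\nabla^{c,2}$-parallel subbundle $\su\lb(2\rb)$, so all of its covariant derivatives do as well, whence $\hol^{c,2}\subseteq\su\lb(2\rb)$; together with the previous inclusion this yields $\hol^{c,2}=\su\lb(2\rb)$. I expect the curvature computation of part a) to be the main obstacle: it is the only genuinely lengthy step, and the sign bookkeeping in the torsion contributions to $\Omega^{c,i}$ is where care is required, whereas b) and c) are short once a) is available.
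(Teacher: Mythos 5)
Your proposal is correct and follows essentially the same route the paper (implicitly) takes: the proposition is a direct computation, obtained by adding the torsion contribution $A^{c,i}=\tau^{-1}\lb(T^{c,i}\rb)$ to the listed Levi-Civita connection forms, evaluating the curvature via the structure equations in the left-invariant adapted frame, contracting for the Ricci tensors, and reading off the holonomy from the span of the curvature values inside the parallel decomposition $\un\lb(2\rb)=\Lambda^2_1\op\Lambda^2_3$. The only point requiring care is the one you already flag, namely fixing the sign convention in the structure equation so that it agrees with the paper's definition of $\mrm{R}$; your Ambrose--Singer argument for part c) is sound as stated.
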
\noindent
Consequently, $T^{c,i}$ is not parallel with respect to $\nabla^{c,i}$.
\subsubsection{Class \texorpdfstring{$\W_9$}{W_9}}
Let $M^4$ be the direct product of the Heisenberg group and $S^1$,
\be
M^4=\lb\{
\begin{pmatrix}
e^{2\pi \mrm{i} x_1}&0&0&0\\
0&1&x_2&x_4\\
0&0&1&x_3\\
0&0&0&1
\end{pmatrix}\in\GL\lb(4,\C\rb)\setsep x_1,\ldots,x_4\in\R\rb\},
\ee
endowed with the left-invariant Riemannian metric
\be
\tilde{g}=dx_1^2+dx_2^2+dx_3^2+\lb(dx_4-x_2dx_3\rb)^2.
\ee
The left-invariant vector fields
\ba
e_1&:=\frac{\del}{\del x_1}, &
e_2&:=\frac{\del}{\del x_2}, &
e_3&:=\frac{\del}{\del x_3}+x_2\frac{\del}{\del x_4}, &
e_4&:=\frac{\del}{\del x_4}
\ea
are dual to
\ba
&dx_1, &
&dx_2, &
&dx_3, &
&dx_4-x_2dx_3
\ea
and form an orthonormal frame on $\lb(M^4,\tilde{g}\rb)$. According to \cite{Abb84},
$\lb(M^4,\tilde{g}\rb)$ carries an orthogonal almost complex structure $J:TM^4\ra TM^4$,
\ba
Je_1&=-e_2, &
Je_2&=e_1, &
Je_3&=-e_4, &
Je_4&=e_3,
\ea
such that the Kähler form $\omega$ of $\lb(M^4,\tilde{g},J\rb)$,
\be
\omega\lb(X,Y\rb):=\tilde{g}\lb(X,JY\rb),
\ee
is closed, but $\lb(M^4,\tilde{g},J\rb)$ is not Kähler, i.e.\
\be
\nabla^{\tilde{g}}\omega\neq0.
\ee
We now consider the almost contact metric $5$-manifold $\lb(M^4\x\R,g,\xi,\eta,\vphi\rb)$ defined via (\ref{eqn:3}) and (\ref{eqn:4}). To begin with, we compute the connection forms  of the Levi-Civita connection with respect to the adapted frame $\lb(e_1,\ldots,e_4,e_5:=\xi\rb)$:
\ba
\omega^g_{1i}&=0, &
\omega^g_{23}&=-\frac{1}{2}e_4, &
\omega^g_{24}&=-\frac{1}{2}e_3, &
\omega^g_{34}&=\frac{1}{2}e_2, &
\omega^g_{i5}&=0.
\ea
Consequently, the intrinsic torsion of $\lb(M^4\x\R,g,\xi,\eta,\vphi\rb)$ is
\be
\Gamma=\frac{1}{4}\lb( e_3\ox\lb(e_{13}-e_{24}\rb) - e_4\ox\lb(e_{14}+e_{23}\rb) \rb).
\ee
\begin{prop}\label{prop:5}
The almost contact metric manifold $\lb(M^4\x\R,g,\xi,\eta,\vphi\rb)$ has the following properties:
\bite
\item[a)]The almost contact metric manifold is of class $\W_9$.
\item[b)]Both the fundamental form $\Phi$ and the $1$-form $\eta$ are closed and coclosed.
\item[c)]The Riemannian curvature tensor is
\be
\mrm{R}^g=-\frac{1}{4}\lb(e_{24}\ox e_{24}+e_{34}\ox e_{34}-3\, e_{23}\ox e_{23}\rb).
\ee
\item[d)]The Riemannian Ricci tensor is
\be
\Ric^g=-\frac{1}{2}\diag\lb(0,1,1,-1,0\rb).
\ee
\item[e)]The Riemannian holonomy algebra is
\be
\hol^g=\su\lb(2\rb)\subset\so\lb(5\rb).
\ee
\eite
\end{prop}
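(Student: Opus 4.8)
The starting point is the intrinsic torsion $\Gamma=\tfrac14\lb(e_3\ox(e_{13}-e_{24})-e_4\ox(e_{14}+e_{23})\rb)$ recorded above and the Levi-Civita data of the left-invariant metric. To prove (a) I would check directly that $\Gamma\in\A_{3,7}=\W_9$. First one notes that the two-forms $e_{13}-e_{24}$ and $e_{14}+e_{23}$ lie in $\Lambda^2_2$: both are annihilated by $\Phi\wedge(\cdot)$ and satisfy $\ast\alpha=\eta\wedge\alpha$. Hence $\Gamma$ takes values in $\m$ and, by the description of $\Lambda^2_2$ in \autoref{sec:2}, obeys $\Gamma(X,Y,Z)=-\Gamma(X,\vphi(Y),\vphi(Z))$. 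It then remains to verify the two defining conditions of $\A_3$: the cyclic sum $\mfr{S}_{X,Y,Z}\Gamma(X,Y,Z)=\tfrac14 e_3\wedge(e_{13}-e_{24})-\tfrac14 e_4\wedge(e_{14}+e_{23})$ vanishes (both terms equal $\tfrac14 e_{234}$), and the trace $\sum_i\Gamma(e_i,e_i,X)$ vanishes (the $i=3$ and $i=4$ contributions cancel). This would place $\Gamma$ in $\W_9$, and $\Gamma\neq0$ gives strict class $\W_9$; in particular the manifold is not integrable.

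For (b) I would use that the left-invariant coframe $e_1,\dots,e_5$ has the single nonzero structure equation $de_4=-e_{23}$ (all other $de_i=0$): then $d\Phi=d(e_{12}+e_{34})=0$, $d\eta=de_5=0$, and the corresponding computation for $\ast\Phi$ and $\ast\eta$ gives $\delta\Phi=\delta\eta=0$ as well. Equivalently, since $\Gamma\in\W_9\subseteq\W_9\op\W_{10}$ the manifold is almost cosymplectic, and since $\Gamma\in\W_9\subseteq\W_4\op\W_5\op\W_7\op\W_8\op\W_9\op\W_{10}$ it is semi-cosymplectic, both by theorem \ref{thm:3}, which together yield (b).

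For (c)--(e) the plan is to compute the Levi-Civita connection from the only nonzero bracket $[e_2,e_3]=e_4$ (the metric being left-invariant on $\mrm{Heis}_3\times\R^2$) via the Koszul formula, obtaining $\nabla^g_{e_2}e_3=-\nabla^g_{e_3}e_2=\tfrac12 e_4$, $\nabla^g_{e_2}e_4=\nabla^g_{e_4}e_2=-\tfrac12 e_3$, $\nabla^g_{e_3}e_4=\nabla^g_{e_4}e_3=\tfrac12 e_2$, and all remaining $\nabla^g_{e_a}e_b$ zero; in particular $e_1$ and $e_5=\xi$ are $\nabla^g$-parallel. Consequently the curvature operator $\mrm{R}^g$ kills every $2$-form containing $e_1$ or $e_5$, so it is determined by its values on the three-dimensional space spanned by $e_{23},e_{24},e_{34}$; evaluating $\mrm{R}^g(e_a,e_b)e_c$ for $a,b,c\in\{2,3,4\}$ with the covariant derivatives above gives $\mrm{R}^g(e_2,e_3,e_2,e_3)=\tfrac34$, $\mrm{R}^g(e_2,e_4,e_2,e_4)=\mrm{R}^g(e_3,e_4,e_3,e_4)=-\tfrac14$, and vanishing mixed components, which is the formula of (c), and tracing this tensor yields the Ricci tensor of (d).

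For (e) one first observes that the image of $\mrm{R}^g$ is exactly $\h:=\mrm{span}(e_{23},e_{24},e_{34})\cong\su(2)$, so $\hol^g\supseteq\h$ by the holonomy theorem of Ambrose and Singer; conversely $\nabla^g$-parallel transport is an isometry fixing the parallel vector fields $e_1$ and $e_5$, hence preserves $\mrm{span}(e_2,e_3,e_4)$ and therefore $\h=\Lambda^2\mrm{span}(e_2,e_3,e_4)$, which being a subalgebra forces $\hol^g\subseteq\h$; thus $\hol^g=\su(2)\subset\so(5)$. None of these steps is deep: the only point needing care is the sign bookkeeping in the curvature evaluation of (c), and the structurally decisive input is the parallelism of $e_1$ and $e_5$, which is what pins the holonomy down to the $\so(3)$ acting on $\mrm{span}(e_2,e_3,e_4)$.
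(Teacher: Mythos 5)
Your proposal is correct and follows the route the paper implicitly takes: the proposition is stated without proof as a direct computation from the frame data ($[e_2,e_3]=e_4$, the listed connection forms, and the displayed $\Gamma$), and your verification — membership of $e_{13}-e_{24}$, $e_{14}+e_{23}$ in $\Lambda^2_2$ plus the $\A_3$ conditions for (a), the structure equation $de_4=-e_{23}$ for (b), and the Koszul/curvature/Ambrose--Singer computations for (c)--(e) — reproduces exactly that computation, with all signs and coefficients checking out. The only cosmetic remark is that your $\hol^g=\Lambda^2\mathrm{span}(e_2,e_3,e_4)\cong\so(3)$ is the abstract $\su(2)$ of the statement (not the standard $\su(2)\subset\un(2)$), which is consistent with the paper writing $\su(2)\subset\so(5)$ here rather than $\su(2)\subset\un(2)$.
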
\noindent
As a result of proposition \ref{prop:5},a) and theorem \ref{thm:5}, there exists a unique metric connection $\nabla^c$ with traceless cyclic torsion compatible with the almost contact metric structure. Its torsion tensor
\be
T^c=\frac{1}{4}\lb(\lb(e_{13}-e_{24}\rb)\ox e_3-\lb(e_{14}+e_{23}\rb)\ox e_4\rb)\in\T_{3,7}
\ee
is not parallel with respect to $\nabla^c$. Moreover, we compute
\begin{prop}
The metric connection $\nabla^c$ has the following properties:
\bite
\item[a)]The curvature tensor with respect to $\nabla^c$ is
\be
\mrm{R}^c=-\frac{1}{8}\lb(
e_{24}\ox\lb(e_{13}+e_{24}\rb)
-e_{34}\ox\lb(e_{12}-e_{34}\rb)
+3\,e_{23}\ox\lb(e_{14}-e_{23}\rb) \rb).
\ee
\item[b)]The Ricci tensor with respect to $\nabla^c$ is
\be
\Ric^c=-\frac{1}{4}\diag\lb(0,1,1,-1,0\rb).
\ee
\item[c)]The holonomy algebra of $\nabla^c$ is
\be
\hol^c=\su\lb(2\rb)\subset\un\lb(2\rb).
\ee
\eite
\end{prop}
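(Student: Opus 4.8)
The plan is to compute all three quantities explicitly, following the recipe used for the preceding examples. By theorem \ref{thm:5} the compatible connection with traceless cyclic torsion is $\nabla^c=\nabla^g+A$, where the contorsion $A\in\A_{3,7}$ is recovered from the torsion via $A=\tau^{-1}\lb(T^c\rb)$. Applying
\be
2\,\tau^{-1}\lb(T\rb)\lb(X,Y,Z\rb)=T\lb(X,Y,Z\rb)-T\lb(Y,Z,X\rb)+T\lb(Z,X,Y\rb)
\ee
to $T^c=\frac{1}{4}\lb(\lb(e_{13}-e_{24}\rb)\ox e_3-\lb(e_{14}+e_{23}\rb)\ox e_4\rb)$ yields a closed form for $A$, hence for the $\so\lb(5\rb)$-valued connection $1$-form $\Omega^c=\Omega^g+A$; concretely $\omega^c_{ij}\lb(X\rb)=\omega^g_{ij}\lb(X\rb)+A\lb(X,e_i,e_j\rb)$, built on the Levi-Civita connection forms $\omega^g_{1i}=0$, $\omega^g_{23}=-\frac{1}{2}e_4$, $\omega^g_{24}=-\frac{1}{2}e_3$, $\omega^g_{34}=\frac{1}{2}e_2$, $\omega^g_{i5}=0$ recorded above.

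Since $M^4$ is a Lie group and the $e_i$ are left-invariant, the differentials $\de e_i$ are read off from the structure constants (equivalently from Cartan's first structure equation for $\nabla^g$). Feeding the $\omega^c_{ij}$ into the second structure equation $\Omega^c_{ij}=\de\omega^c_{ij}+\sum_k\omega^c_{ik}\wedge\omega^c_{kj}$ --- or, equivalently, adding to the $\mrm{R}^g$ of proposition \ref{prop:5} the curvature contribution of the contorsion, built from $\nabla^g A$ and the $\so\lb(5\rb)$-bracket of $A$ with itself --- produces the curvature $2$-forms, and $\Omega^c_{ij}\lb(X,Y\rb)=\mrm{R}^c\lb(X,Y,e_i,e_j\rb)$ gives part a). Part b) is then a contraction of this explicit tensor, $\Ric^c\lb(X,Y\rb)=\sum_i\mrm{R}^c\lb(e_i,X,Y,e_i\rb)$, in which only the terms of $\mrm{R}^c$ whose two $\Lambda^2$-factors share their complementary index contribute, giving $\Ric^c=-\frac{1}{4}\diag\lb(0,1,1,-1,0\rb)$.

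For part c), observe first that the endomorphism part of $\mrm{R}^c$ --- the second tensor slot, viewed as a $2$-form via $g$ --- lies in the span of $e_{13}+e_{24}=\omega_2$, $e_{12}-e_{34}=\omega_1$ and $e_{14}-e_{23}=\omega_3$, hence in $\su\lb(2\rb)=\Lambda^2_3$. The subbundle $\Lambda^2_3\subset\Lambda^2TM^5\cong\so\lb(TM^5\rb)$ is $\nabla^c$-parallel, since it is characterized by $\eta$, $\Phi$ and the Hodge operator, each of which $\nabla^c$ preserves ($\nabla^c\eta=0$, $\nabla^c\Phi=0$, $\nabla^c g=0$). By the Ambrose--Singer holonomy theorem $\hol^c$ is spanned by parallel transports of curvature operators; these all take values in the $\nabla^c$-parallel bundle $\su\lb(2\rb)$, so $\hol^c\subseteq\su\lb(2\rb)$. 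Conversely $\omega_1$, $\omega_2$, $\omega_3$ are linearly independent, so the image of $\mrm{R}^c$ already spans all of $\su\lb(2\rb)$, whence $\su\lb(2\rb)\subseteq\hol^c$; equality follows.

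The one genuinely laborious step is the curvature computation in part a): one must keep track of the nontrivial Lie brackets of the $e_i$ and combine the Levi-Civita connection forms with the contorsion $A$ without sign errors in the structure equations. Everything afterwards --- the Ricci contraction in b) and the two-inclusion holonomy argument in c) --- is routine, and parts b), c) may alternatively be extracted from the proofs of the analogous statements for the earlier examples.
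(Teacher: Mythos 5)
Your proposal is correct and is essentially the computation the paper itself performs (the proposition is stated in the examples section as the outcome of a direct calculation, with no further proof given): recover the contorsion $A=\tau^{-1}\lb(T^c\rb)$ — equivalently $A=-\Gamma$, since $\Gamma\in\A_{3,7}$ and $\pr_{\W}$ restricts to the identity there — form $\omega^c_{ij}=\omega^g_{ij}+A\lb(\cdot,e_i,e_j\rb)$, compute the curvature $2$-forms, contract for $\Ric^c$, and combine Ambrose--Singer with the $\nabla^c$-parallel subbundle $\Lambda^2_3=\mathrm{span}\lb(\omega_1,\omega_2,\omega_3\rb)=\su\lb(2\rb)$ for part c). One caveat on the step you rightly flag as sign-sensitive: with the paper's convention $\omega_{ij}=g\lb(\nabla e_i,e_j\rb)$ the structure equation is $\mrm{R}^c\lb(\cdot,\cdot,e_i,e_j\rb)=\de\omega^c_{ij}-\sum_k\omega^c_{ik}\wedge\omega^c_{kj}$ (your $+$ sign belongs to the convention $\omega^i{}_j=\omega_{ji}$); only with this sign does, for instance, the coefficient $-\tfrac{3}{8}$ in front of $e_{23}\ox e_{14}$ come out, since $\de\omega^c_{14}=-\tfrac14 e_{23}$ and the quadratic term contributes a further $-\tfrac18 e_{23}$ rather than $+\tfrac18 e_{23}$.
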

%
%--------------------------------------------------------------------------------------------
%
\subsection{Examples of class \texorpdfstring{$\W_8$ and $\W_{10}$}{W_8 and W_10}}
The following Riemannian $5$-manifold is taken from the classification \cite{Kow80} of generalized symmetric Riemannian spaces in low dimensions. Let $G$ be the Lie group
\be
\lb\{
\begin{pmatrix}
1&0&0&0&x_1\\
0&1&0&0&x_2\\
x_5&0&1&0&x_3\\
0&-x_5&0&1&x_4\\
0&0&0&0&1
\end{pmatrix}\in\GL\lb(5,\R\rb)\setsep x_1,\ldots,x_5\in\R\rb\}
\ee
equipped with the left-invariant Riemannian metric
\be
g=\frac{1}{4}dx_1^2+\frac{1}{4}dx_2^2+\lb(dx_3-x_5dx_1\rb)^2+\lb(dx_4+x_5dx_2\rb)^2+dx_5^2.
\ee
The vector fields
\ba
&2\lb(\frac{\del}{\del x_1}+x_5\frac{\del}{\del x_3}\rb), &
&2\lb(\frac{\del}{\del x_2}-x_5\frac{\del}{\del x_4}\rb), &
&\frac{\del}{\del x_3}, &
&\frac{\del}{\del x_4}, &
&\frac{\del}{\del x_5} &
\ea
are left-invariant and dual to
\ba
&\frac{1}{2}dx_1, &
&\frac{1}{2}dx_2, &
&dx_3-x_5dx_1, &
&dx_4+x_5dx_2, &
&dx_5.
\ea
We discuss two almost contact metric structures on $\lb(G,g\rb)$ in detail.
\subsubsection{Class \texorpdfstring{$\W_8$}{W_8}}\label{sec:W8}
There exists an almost contact metric structure $\lb(\xi,\eta,\vphi\rb)$ on $\lb(G,g\rb)$ such that $\lb(e_1,e_2,e_3,e_4,e_5\rb)$,
\ba
e_1&:=2\lb(\frac{\del}{\del x_1}+x_5\frac{\del}{\del x_3}\rb),&
e_2&:=-2\lb(\frac{\del}{\del x_2}-x_5\frac{\del}{\del x_4}\rb),&&\\
e_3&:=\frac{\del}{\del x_3},&
e_4&:=\frac{\del}{\del x_4},&
e_5&:=\frac{\del}{\del x_5},
\ea
is an adapted frame of $\lb(G,g,\xi,\eta,\vphi\rb)$. The non-zero connection forms of the Levi-Civita connection $\nabla^g$ with respect to $\lb(e_1,e_2,e_3,e_4,e_5\rb)$ are
\ba
\omega_{13}^g&=\omega_{24}^g=e_5, &
\omega_{15}^g&=e_3, &
\omega_{25}^g&=e_4, &
\omega_{35}^g&=e_1, &
\omega_{45}^g&=e_2.
\ea
Hence,
\be
\Gamma=e_1\ox e_{35}+e_2\ox e_{45}+e_3\ox e_{15}+e_4\ox e_{25}
\ee
is the intrinsic torsion of $\lb(G,g,\xi,\eta,\vphi\rb)$. Moreover, we have
\begin{prop}\label{prop:6}
The almost contact metric manifold $\lb(G,g,\xi,\eta,\vphi\rb)$ has the following properties:
\bite
\item[a)]The almost contact metric manifold is of class $\W_8$.
\item[b)]The Nijenhuis tensor $N$ vanishes.
\item[c)]The fundamental form $\Phi$ and the $1$-form $\eta$ satisfy
\ba
d\Phi\wedge\Phi&=0,&
\delta\Phi&=0,&
d\eta&=0, &
\delta\eta&=0.
\ea
\item[d)]The Riemannian curvature tensor is
\ba
\mrm{R}^g=
&e_{12}\ox e_{34}+e_{34}\ox e_{12}-e_{14}\ox e_{23}-e_{23}\ox e_{14}-e_{13}\ox e_{13}\\
&-e_{24}\ox e_{24}+3\,e_{15}\ox e_{15}+3\,e_{25}\ox e_{25}-e_{35}\ox e_{35}-e_{45}\ox e_{45}.
\ea
\item[e)]The Riemannian Ricci tensor is
\be
\Ric^g=\diag\lb(-2,-2,2,2,-4\rb).
\ee
\item[f)]The Riemannian holonomy algebra is
\be
\hol^g=\so\lb(5\rb).
\ee
\eite
\end{prop}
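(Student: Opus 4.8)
The plan is to deduce all six items from the two pieces of data already on the table: the Levi-Civita connection forms $\omega^g_{ij}$ and the intrinsic torsion $\Gamma=e_1\ox e_{35}+e_2\ox e_{45}+e_3\ox e_{15}+e_4\ox e_{25}$. For a), I would check directly that $\Gamma$ lies in $\W_8=\A_{3,6}$. Since $\A_{3,6}$ is by definition the set of $A\in\A_3$ with $A(X,Y,Z)=A(\vphi(X),Y,\vphi(Z))+A(\vphi(X),\vphi(Y),Z)$, it suffices to verify that $\Gamma\in\A_3$ and that $\Gamma$ satisfies this relation. The trace $\sum_i\Gamma(e_i,e_i,X)$ vanishes termwise, because each summand has the form $e_a\ox e_{b5}$ with $a\notin\{b,5\}$, and $\mfr{S}_{X,Y,Z}\Gamma(X,Y,Z)=e_1\wedge e_{35}+e_2\wedge e_{45}+e_3\wedge e_{15}+e_4\wedge e_{25}=0$, so $\Gamma\in\A_3$. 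Using $\vphi(e_1)=-e_2$, $\vphi(e_2)=e_1$, $\vphi(e_3)=-e_4$, $\vphi(e_4)=e_3$, $\vphi(e_5)=0$ together with $g(\vphi(X),Y)=-g(X,\vphi(Y))$, a short computation shows that the operator $A\mapsto A(\vphi(\cdot),\cdot,\vphi(\cdot))+A(\vphi(\cdot),\vphi(\cdot),\cdot)$ interchanges $e_1\ox e_{35}\leftrightarrow e_2\ox e_{45}$ and $e_3\ox e_{15}\leftrightarrow e_4\ox e_{25}$, hence fixes $\Gamma$. As $\Gamma\neq0$, the almost contact metric manifold is of (strict) class $\W_8$. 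Part b) is then immediate from theorem \ref{thm:1}, since $\W_8\subset\W_1\op\W_3\op\W_5\op\W_6\op\W_8$.

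For c), formula (\ref{eqn:1}) gives $\nabla^g_{e_i}\Phi=\sigma_1(\Gamma(e_i),\Phi)$; substituting $\Gamma(e_1)=e_{35}$, $\Gamma(e_2)=e_{45}$, $\Gamma(e_3)=e_{15}$, $\Gamma(e_4)=e_{25}$, $\Gamma(e_5)=0$ and $\Phi=e_{12}+e_{34}$ determines $\nabla^g_{e_i}\Phi$ explicitly, from which $d\Phi=\sum_i e_i\wedge\nabla^g_{e_i}\Phi=2(e_{235}-e_{145})$ and $\delta\Phi=-\sum_i e_i\hook\nabla^g_{e_i}\Phi=0$; in particular $d\Phi\wedge\Phi=0$. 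From the forms $\omega^g_{i5}$ one reads off $\nabla^g_X\xi$, and $g(\nabla^g_X\xi,Y)$ turns out to be symmetric in $X,Y$ with vanishing trace, so $d\eta=0$ and $\delta\eta=0$ (alternatively one may invoke lemma \ref{lem:2}).

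Parts d) and e) amount to computing the curvature $2$-forms $\Omega^g_{ij}=d\omega^g_{ij}+\sum_k\omega^g_{ik}\wedge\omega^g_{kj}$; this requires the exterior derivatives of the dual coframe, namely $de_1=de_2=de_5=0$, $de_3=2\,e_{15}$, $de_4=2\,e_{25}$, obtained from the structure equations of the left-invariant coframe. Reading off $\mrm{R}^g$ as a $(4,0)$-tensor with the sign convention fixed at the start of \autoref{sec:5}, and contracting, yields $\Ric^g=\diag(-2,-2,2,2,-4)$. For f), the Ambrose--Singer theorem says $\hol^g$ contains every curvature operator $\mrm{R}^g(e_i,e_j)\in\so(5)$; reading these off from the formula in d) gives, up to sign, $e_{34},e_{12},e_{23},e_{14},e_{13},e_{24},e_{15},e_{25},e_{35},e_{45}$, which already span $\Lambda^2\cong\so(5)$, whence $\hol^g=\so(5)$.

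I expect the only real obstacle to be the sign bookkeeping in d): evaluating the $\omega\wedge\omega$ terms correctly and converting the curvature forms $\Omega^g_{ij}$ into the $(4,0)$-tensor $\mrm{R}^g$. Once d) is in place, e) is a trace and f) a one-line span argument, while a), b), c) are forced by the explicit $\Gamma$ and the general machinery of \autoref{sec:3}.
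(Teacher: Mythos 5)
Your proposal is correct and follows essentially the same route as the paper, which states proposition \ref{prop:6} as a direct consequence of the computed connection forms and intrinsic torsion $\Gamma=e_1\ox e_{35}+e_2\ox e_{45}+e_3\ox e_{15}+e_4\ox e_{25}$: checking $\Gamma\in\A_{3,6}$ for a), invoking theorem \ref{thm:1} for b), using formula $(\star)$ for c), and computing curvature, Ricci and holonomy via Ambrose--Singer for d)--f). All the individual computations you outline (in particular $d\Phi=2\lb(e_{235}-e_{145}\rb)$, the symmetry of $g\lb(\nabla^g_X\xi,Y\rb)$, and the span of the curvature operators) check out against the stated connection forms.
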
\noindent
Theorem \ref{thm:5} now implies that there exists a unique metric connection $\nabla^c$ with traceless cyclic torsion preserving the almost contact metric structure $\lb(\xi,\eta,\vphi\rb)$. Its torsion tensor $T^c$ is given by
\be
T^c=e_{35}\ox e_1+e_{45}\ox e_2+e_{15}\ox e_3+e_{25}\ox e_4\in\T_{3,6}
\ee
Consequently, the non-zero connection forms of $\nabla^c$ are
\ba
\omega^c_{13}=\omega^c_{24}=e_5.
\ea
This proves
\begin{prop}\label{prop:8}
The metric connection $\nabla^c$ has the following properties:
\bite
\item[a)]The curvature tensor with respect to $\nabla^c$ vanishes.
\item[b)]The torsion tensor of $\nabla^c$ is parallel with respect to $\nabla^c$.
\eite
\end{prop}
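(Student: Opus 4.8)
The plan is to pin down $\nabla^c$ completely and then read both assertions off the resulting connection $1$-form. Since the manifold is of class $\W_8=\A_{3,6}$ and, by lemma~\ref{lem:1}, $\pr_{\W}$ restricts to the identity on $\A_{3,6}$, the difference tensor between $\nabla^c$ and $\nabla^g$ is $A=-\Gamma$; the same value comes out of the formula of theorem~\ref{thm:5} once one substitutes $\de\eta=0$, $\delta\Phi=0$ and $N=0$. Adding $A(\cdot,e_i,e_j)=-\Gamma(\cdot,e_i,e_j)$ to the Levi-Civita connection forms $\omega^g_{ij}$ listed above, each contribution $A(\cdot,e_i,e_j)$ either vanishes -- leaving $\omega^c_{13}=\omega^c_{24}=e_5$ -- or exactly cancels the corresponding $\omega^g_{ij}$. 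This recovers the statement made just before the proposition, namely $\Omega^c=e_5\ox(e_{13}+e_{24})$; equivalently it can be obtained from the stated $T^c$ via $\tau$.

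For part a) I would invoke the second structure equation $\mrm{R}^c=\de\Omega^c+\Omega^c\wedge\Omega^c$. The $\so(5)$-valued $1$-form $\Omega^c$ takes all its values in the line spanned by $e_{13}+e_{24}$, which is an abelian subspace, so $\Omega^c\wedge\Omega^c=0$; and $\de\Omega^c=0$ since $e_5=\de x_5$ is closed in the given coordinates. Hence $\mrm{R}^c=0$. (Spelled out in components: in $\sum_k\omega^c_{ik}\wedge\omega^c_{kj}$ the only terms that could survive are $\omega^c_{13}\wedge\omega^c_{31}=e_5\wedge(-e_5)$ and its counterpart in the $2,4$-plane, both zero.)

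For part b) I would compute $\nabla^cT^c$ directly from this data. From $\nabla^c_Xe_i=\sum_j\omega^c_{ij}(X)\,e_j$ one gets $\nabla^c_Xe_i=e_5(X)\,R(e_i)$, where $R$ is the fixed skew-symmetric endomorphism corresponding to $e_{13}+e_{24}$; since the components of $T^c$ in the adapted frame $(e_1,\dots,e_5)$ are constant, the Leibniz rule turns this into $\nabla^c_XT^c=e_5(X)\cdot R\cdot T^c$, where $R$ acts on $\T=\Lambda^2\ox\Lambda^1$ through the induced $\so(5)$-representation. So part b) reduces to evaluating this action on the four rank-one summands $e_{35}\ox e_1,\ e_{45}\ox e_2,\ e_{15}\ox e_3,\ e_{25}\ox e_4$ of $T^c$ and adding up. The point of care I anticipate here -- really the only substantive computational step -- is keeping the signs consistent across the identifications between the connection matrix, $\so(5)\cong\Lambda^2$, and the representation on $\T$; everything else is forced by the shape of $\Omega^c$.
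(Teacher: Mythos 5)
Your treatment of part a) is correct and is essentially the paper's own (tacit) argument: since $\Omega^c=e_5\ox\lb(e_{13}+e_{24}\rb)$ with $e_5=\de x_5$ closed and the quadratic term vanishing, $\mrm{R}^c=0$. Your derivation of $\omega^c_{13}=\omega^c_{24}=e_5$ from $A=-\Gamma$ is also fine.

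For part b), however, the step you postpone as a routine sign check is exactly where the proof fails to close. Your reduction is right: with constant components in the left-invariant frame, $\nabla^c_XT^c=e_5\lb(X\rb)\,D\lb(T^c\rb)$, where $D$ is the derivation induced by $\Omega^c\lb(e_5\rb)=e_{13}+e_{24}$, acting on both tensor factors by $e_1\mapsto e_3$, $e_3\mapsto -e_1$, $e_2\mapsto e_4$, $e_4\mapsto -e_2$, $e_5\mapsto 0$. But carrying out the evaluation you defer gives
\be
D\lb(e_{35}\ox e_1+e_{45}\ox e_2+e_{15}\ox e_3+e_{25}\ox e_4\rb)=2\lb(e_{35}\ox e_3+e_{45}\ox e_4-e_{15}\ox e_1-e_{25}\ox e_2\rb)\neq 0,
\ee
because the contributions of $e_{35}\ox e_1$ and of $e_{15}\ox e_3$ reinforce rather than cancel. (Contrast the Heisenberg examples, where the torsion has the alternating sign pattern $e_{25}\ox e_1-e_{15}\ox e_2+\ldots$ and the two contributions do cancel; abstractly, $\T_{2,1}$ and $\T_{3,1}$ are trivial $\Un(2)$-modules and hence automatically annihilated, whereas $\T_{3,6}$ is a $3$-dimensional module on which $\exp\lb(t\lb(e_{13}+e_{24}\rb)\rb)$ fixes only a line, and $T^c$ does not lie on that line.) The same conclusion follows from a two-line direct check: $\nabla^c_{e_5}e_1=e_3$, $\nabla^c_{e_5}e_3=-e_1$, $T^c\lb(e_1,e_5\rb)=e_3$, $T^c\lb(e_3,e_5\rb)=e_1$, hence $\lb(\nabla^c_{e_5}T^c\rb)\lb(e_1,e_5\rb)=\nabla^c_{e_5}e_3-T^c\lb(e_3,e_5\rb)=-2\,e_1$. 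The paper's own justification is no more detailed than yours (it, too, stops at the list of connection forms), so either a sign in the identifications between $\Omega^c$, $\so(5)\cong\Lambda^2$ and the induced action on $\T$ must be revisited, or assertion b) itself does not hold for this example. Either way, your argument as written leaves open precisely the step that matters, and with the conventions you have set up that step does not yield zero.
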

\subsubsection{Class \texorpdfstring{$\W_{10}$}{W_{10}}}
Let $\lb(\xi,\eta,\vphi\rb)$ be the almost contact metric structure on $\lb(G,g\rb)$ such that $\lb(e_1,e_2,e_3,e_4,e_5\rb)$ defined by
\ba
e_1&:=2\lb(\frac{\del}{\del x_1}+x_5\frac{\del}{\del x_3}\rb),&
e_2&:=\frac{\del}{\del x_3},&&\\
e_3&:=-2\lb(\frac{\del}{\del x_2}-x_5\frac{\del}{\del x_4}\rb),&
e_4&:=\frac{\del}{\del x_4},&
e_5&:=\frac{\del}{\del x_5}
\ea
is an adapted frame of $\lb(G,g,\xi,\eta,\vphi\rb)$. With the results of section \ref{sec:W8}, we immediately have
\ba
\omega_{12}^g&=\omega_{34}^g=e_5, &
\omega_{15}^g&=e_2, &
\omega_{25}^g&=e_1, &
\omega_{35}^g&=e_4, &
\omega_{45}^g&=e_3
\ea
and
\be
\omega_{13}^g=\omega_{14}^g=\omega_{23}^g=\omega_{24}^g=0.
\ee
Consequently,
\be
\Gamma=e_1\ox e_{25}+e_2\ox e_{15}+e_3\ox e_{45}+e_4\ox e_{35}.
\ee
\begin{prop}
The almost contact metric manifold $\lb(G,g,\xi,\eta,\vphi\rb)$ is of class $\W_{10}$. Moreover, both the fundamental form $\Phi$ and the $1$-form $\eta$ are closed and coclosed.
\end{prop}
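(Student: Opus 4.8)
The plan is to establish first that the intrinsic torsion $\Gamma$ displayed just above lies in the module $\W_{10}$, and then to deduce the assertions about $\Phi$ and $\eta$ from the classification results of \autoref{sec:3}.

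For the membership, recall that $\Gamma=\pr_\m\lb(\Omega^g\rb)$ lies in $\W=\Lambda^1\ox\m$ by construction, and that $\W_{10}$ is by definition $\A_{3,8}$ (which does sit inside $\W$, by lemma \ref{lem:1}). So it suffices to check that $\Gamma$ satisfies the relations defining $\A_{3,8}$. That $\Gamma\in\A$ is clear, since $\Gamma$ is a sum of terms $e_i\ox\beta_i$ with each $\beta_i$ a $2$-form. For $\Gamma\in\A_3$ I would verify on the adapted frame that the trace $\sum_i\Gamma\lb(e_i,e_i,X\rb)$ vanishes (each diagonal term does, as $\beta_i$ never involves $e_i$) and that the cyclic $3$-form $\mfr{S}_{X,Y,Z}\Gamma\lb(X,Y,Z\rb)=\sum_i e_i\wedge\beta_i$ equals $e_{125}-e_{125}+e_{345}-e_{345}=0$. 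Finally I would check the identity
\be
\Gamma\lb(X,Y,Z\rb)=-\Gamma\lb(\vphi\lb(X\rb),Y,\vphi\lb(Z\rb)\rb)-\Gamma\lb(\vphi\lb(X\rb),\vphi\lb(Y\rb),Z\rb)
\ee
slot by slot on the frame, using $\vphi\lb(e_1\rb)=-e_2$, $\vphi\lb(e_2\rb)=e_1$, $\vphi\lb(e_3\rb)=-e_4$, $\vphi\lb(e_4\rb)=e_3$ and $\vphi\lb(e_5\rb)=0$; whenever one of the three slots carries $e_5$ both sides vanish at once, since $\vphi\lb(e_5\rb)=0$ and $\Gamma$ has no $e_5\ox\lb(\cdot\rb)$ summand. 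As $\Gamma\neq0$, this shows that $\lb(G,g,\xi,\eta,\vphi\rb)$ is of strict class $\W_{10}$.

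The second assertion then drops out of theorem \ref{thm:3}: as $\W_{10}\subset\W_9\op\W_{10}$, the manifold is almost cosymplectic, whence $d\Phi=0$ and $d\eta=0$; and as $\W_{10}\subset\W_4\op\W_5\op\W_7\op\W_8\op\W_9\op\W_{10}$, it is semi-cosymplectic, whence $\delta\Phi=0$ and $\delta\eta=0$. Hence $\Phi$ and $\eta$ are both closed and coclosed. (One could instead substitute $\Gamma$ into formula (\ref{eqn:1}) together with lemma \ref{lem:2} and compute $d\Phi,\delta\Phi,d\eta,\delta\eta$ directly, but this is longer.)

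There is no real obstacle; the one place demanding care is the slot-by-slot check of the $\A_{3,8}$-identity, which must be carried out attentively, because the companion example in section \ref{sec:W8} carries an almost identical intrinsic torsion — differing only by a transposition of two frame vectors — and yet lies in the neighbouring module $\W_8=\A_{3,6}$ rather than $\W_{10}=\A_{3,8}$.
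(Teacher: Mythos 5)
Your overall strategy is sound and is essentially the one the paper leaves implicit: read off $\Gamma$ from the connection forms, verify the algebraic relations defining $\A_{3,8}=\W_{10}$, and then feed the resulting class into theorem \ref{thm:3} to obtain $d\Phi=d\eta=0$ (almost cosymplectic, since $\W_{10}\subset\W_9\op\W_{10}$) and $\delta\Phi=\delta\eta=0$ (semi-cosymplectic, since $\W_{10}\subset\W_4\op\W_5\op\W_7\op\W_8\op\W_9\op\W_{10}$). The trace and cyclic-sum checks placing $\Gamma$ in $\A_3$ are correct, and the second half of the argument is clean.

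There is, however, a concrete error in the one step you yourself single out as the crux. In verifying
\be
\Gamma\lb(X,Y,Z\rb)=-\Gamma\lb(\vphi\lb(X\rb),Y,\vphi\lb(Z\rb)\rb)-\Gamma\lb(\vphi\lb(X\rb),\vphi\lb(Y\rb),Z\rb),
\ee
you claim that both sides vanish whenever any of the three slots carries $e_5$. That is true only for the \emph{first} slot (there $\Gamma$ has no $e_5\ox\lb(\cdot\rb)$ summand and $\vphi\lb(e_5\rb)=0$). For the second or third slot it is false: for instance $\Gamma\lb(e_1,e_2,e_5\rb)=e_{25}\lb(e_2,e_5\rb)=1\neq0$, while the right-hand side equals $\Gamma\lb(e_2,e_1,e_5\rb)=e_{15}\lb(e_1,e_5\rb)=1$, as it must. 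Since every summand of $\Gamma$ has the form $e_i\ox e_{j5}$, the triples with $e_5$ in the second or third slot are precisely those on which $\Gamma$ is nonzero; they are the \emph{only} nontrivial cases, and they are exactly where the sign distinguishing $\A_{3,8}$ from $\A_{3,6}$ (i.e.\ $\W_{10}$ from $\W_8$) is decided. As written, your recipe reduces the entire verification to $0=0$ and would equally well ``prove'' $\Gamma\in\A_{3,6}$, which is impossible because $\A_{3,6}\cap\A_{3,8}=\lb\{0\rb\}$. You must therefore actually evaluate both sides on the triples $\lb(e_i,e_j,e_5\rb)$ and $\lb(e_i,e_5,e_j\rb)$ with $1\leq i,j\leq4$; once this is done the identity does hold with the stated minus signs, and the rest of your argument goes through.
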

It is easy to verify that the Riemannian curvature tensor and the Riemannian Ricci tensor now read as follows:
\ba
\mrm{R}^g=
&e_{13}\ox e_{24}+e_{24}\ox e_{13}+e_{14}\ox e_{23}+e_{23}\ox e_{14}-e_{12}\ox e_{12}\\
&-e_{34}\ox e_{34}+3\,e_{15}\ox e_{15}-e_{25}\ox e_{25}+3\,e_{35}\ox e_{35}-e_{45}\ox e_{45},\\
\Ric^g=&\diag\lb(-2,2,-2,2,-4\rb).
\ea
By applying theorem \ref{thm:5}, we deduce that $\lb(G,g,\xi,\eta,\vphi\rb)$ admits a unique compatible connection $\nabla^c$ with traceless cyclic torsion. The torsion tensor of $\nabla^c$ is
\be
T^c=e_{25}\ox e_1+e_{15}\ox e_2+e_{45}\ox e_3+e_{35}\ox e_4\in\T_{3,8}.
\ee
Moreover, we compute the non-zero connection forms of $\nabla^c$:
\ba
\omega^c_{12}=\omega^c_{34}=e_5.
\ea
Therefore, proposition \ref{prop:8} is also valid for the connection $\nabla^c$ discussed here.
%
%
%
%------------------------------------------------------------------------------------------
%
\begin{bibdiv}
\begin{biblist}
\bib{Abb84}{article}{
  author={Abbena, E.},
  title={An example of an almost Kähler manifold which is not Kählerian},
  journal={Boll. Unione Mat. Ital. Sez. A Mat. Soc. Cult. (6)},
  volume={3},
  date={1984},
  pages={383--392}
}
\bib{Agr06}{article}{
  author={Agricola, I.},
  title={The Srn\'i lectures on non-integrable geometries with torsion},
  journal={Arch. Math. (Brno)},
  volume={42},
  date={2006},
  pages={5--84}
}
\bib{BO69}{article}{
  author={Bishop, R.L.},
  author={O'Neill, B.},
  title={Manifolds of negative curvature},
  journal={Trans. Amer. Math. Soc.},
  volume={145},
  date={1969},
  pages={1--49}
}
\bib{Bla67}{article}{
  author={Blair, D.E.},
  title={The theory of quasi-Sasakian structures},
  journal={J. Differential Geom.},
  volume={1},
  date={1967},
  pages={331--345}
}
\bib{Bla71}{article}{
  author={Blair, D.E.},
  title={Almost contact manifolds with Killing structure tensors},
  journal={Pacific J. Math.},
  volume={39},
  date={1971},
  pages={285--292}
}
\bib{Bla76}{book}{
  author={Blair, D.E.},
  title={Contact manifolds in Riemannian geometry},
  series={Lecture Notes in Math.},
  volume={509},
  publisher={Springer},
  date={1976}
}
\bib{Car25}{article}{
  author={Cartan, E.},
  title={Sur les vari\'et\'es \`a connexion affine et la th\'eorie de la relativit\'e g\'en\'eralis\'ee (deuxi\`eme partie)},
  journal={Ann. Sci. \'Ecole Norm. Sup. (3)},
  volume={42},
  date={1925},
  pages={17--88}
}
\bib{CG90}{article}{
  author={Chinea, D.},
  author={Gonzalez, C.},
  title={A classification of almost contact metric manifolds},
  journal={Ann. Mat. Pura Appl. (4)},
  volume={156},
  date={1990},
  pages={15--36}
}
\bib{CM92}{article}{
  author={Chinea, D.},
  author={Marrero, J.C.},
  title={Classification of almost contact metric structures},
  journal={Rev. Roumaine Math. Pures Appl.},
  volume={37},
  date={1992},
  pages={199--212}
}
\bib{Fin95}{article}{
  author={Fino, A.},
  title={Almost contact homogeneous structures},
  journal={Boll. Unione Mat. Ital. Sez. A Mat. Soc. Cult. (7)},
  volume={9},
  date={1995},
  pages={299--311}
}
\bib{Fri03}{article}{
  author={Friedrich, T.},
  title={On types of non-integrable geometries},
  journal={Rend. Circ. Mat. Palermo (2) Suppl.},
  volume={71},
  date={2003},
  pages={99--113}
}
\bib{FI02}{article}{
  author={Friedrich, T.},
  author={Ivanov, S.},
  title={Parallel spinors and connections with skew-symmetric torsion in string theory},
  journal={Asian J. Math.},
  volume={6},
  date={2002},
  pages={303--336},
}
\bib{Gra59}{article}{
  author={Gray, J.W.},
  title={Some global properties of contact structures},
  journal={Ann. of Math. (2)},
  volume={69},
  date={1959},
  pages={421--450},
}
\bib{JV81}{article}{
  author={Janssens, D.},
  author={Vanhecke, L.},
  title={Almost contact structures and curvature tensors},
  journal={Kodai Math. J.},
  volume={4},
  date={1981},
  pages={1--27}
}
\bib{Ken72}{article}{
  author={Kenmotsu, K.},
  title={A class of almost contact Riemannian manifolds},
  journal={Tohoku Math. J.},
  volume={24},
  date={1972},
  pages={93--103}
}
\bib{Kow80}{book}{
  author={Kowalski, O.},
  title={Generalized symmetric spaces},
  series={Lecture Notes in Math.},
  volume={805},
  publisher={Springer},
  date={1980}
}
\bib{KT87}{article}{
  author={Kowalski, O.},
  author={Tricerri, F.},
  title={Riemannian manifolds of dimension $n\leq 4$ admitting a homogeneous structure of class $T_2$},
  conference={
    title={Conferenze del Seminario di Matematica},
    address={Bari},
    date={1987}
  },
  book={
    publisher={Laterza},
    date={1987},
  },
  pages={1--24}
}
\bib{Lib59}{article}{
  author={Libermann, P.},
  title={Sur les automorphismes infinit\'esimaux des structures symplectiques et des structures de contact},
  conference={
    title={Colloq. G\'eom\'etrie Diff\'erentielle Globale},
    address={Bruxelles},
    date={1958}
  },
  book={
    publisher={Gauthier-Villars},
    date={1959},
  },
  pages={37--59}
}
\bib{Oub85}{article}{
  author={Oubina, J.},
  title={New classes of almost contact metric structures},
  journal={Publ. Math. Debrecen},
  volume={32},
  date={1985},
  pages={187--193}
}
\bib{Sal89}{book}{
  author={Salamon, S.},
  title={Riemannian geometry and holonomy groups},
  series={Pitman Res. Notes Math. Ser.},
  volume={201},
  publisher={Longman},
  date={1989}
}
\bib{SH61}{article}{
  author={Sasaki, S.},
  author={Hatakeyama, Y.},
  title={On differentiable manifolds with certain structures which are closely related to almost contact structure, II},
  journal={Tohoku Math. J.},
  volume={13},
  date={1961},
  pages={281--294}
}
\bib{SH62}{article}{
  author={Sasaki, S.},
  author={Hatakeyama, Y.},
  title={On differentiable manifolds with contact metric structures},
  journal={J. Math. Soc. Japan},
  volume={14},
  date={1962},
  pages={249--271}
}
\end{biblist}
\end{bibdiv}
\end{document}